\newcommand{\Alpha}{\mathrm{A}}
\newcommand{\Beta}{\mathrm{B}}
\newcommand{\assign}{:=}
\newcommand{\longdownminus}{{\mbox{\rotatebox[origin=c]{-90}{$\longminus$}}}}
\newcommand{\longminus}{{-\!\!-}}
\newcommand{\mathd}{\mathrm{d}}
\newcommand{\mathlambda}{\lambda}
\newcommand{\mathpi}{\pi}
\newcommand{\noplus}{}
\newcommand{\of}{:}
\newcommand{\tmaffiliation}[1]{\\ #1}
\newcommand{\tmmathbf}[1]{\ensuremath{\boldsymbol{#1}}}
\newcommand{\tmop}[1]{\ensuremath{\operatorname{#1}}}
\newcommand{\tmrsub}[1]{\ensuremath{_{\textrm{#1}}}}
\newcommand{\tmtextit}[1]{\text{{\itshape{#1}}}}
\newenvironment{proof}{\noindent\textbf{Proof\ }}{\hspace*{\fill}$\Box$\medskip}
\newcounter{nnacknowledgments}
\newtheorem{acknowledgments*}[nnacknowledgments]{Acknowledgments}}
\newtheorem{corollary}{Corollary}
\newtheorem{definition}{Definition}
{\theorembodyfont{\rmfamily}\newtheorem{example}{Example}}
\newtheorem{proposition}{Proposition}
{\theorembodyfont{\rmfamily}\newtheorem{remark}{Remark}}
\newtheorem{theorem}{Theorem}
\begin{document}

\title{$q$-deformed Perelomov-Popov measures and quantized free probability}

\author{
  Panagiotis Zografos
  \tmaffiliation{Leipzig University, Mathematics Institute}
}

\maketitle

\begin{abstract}
  The asymptotic study of tuples of random non-increasing integers is crucial
  for probabilistic models coming from asymptotic representation theory and
  statistical physics. We study the global behavior of such tuples,
  introducing a new family of discrete probability measures, depending on a
  parameter $q \in [- 1, 1]$. We prove the Law of Large Numbers for these
  measures based on the asymptotics of the Schur generating functions and we
  provide explicit formulas for the moments and the free cumulants of the
  limiting measures. Our results provide an interpolation between the results of
  Bufetov and Gorin for $q = 0, 1$, who distinguished these two cases from the
  side of free probability theory. We show the connection with free
  probability theory and we introduce a deformation of free convolution,
  motivated by our formulas for the free cumulants. We also study the first
  order correction to the Law of Large Numbers and we make the connection with
  infinitesimal free probability, computing explicitly the infinitesimal
  moments and the infinitesimal free cumulants. Finally, we prove
  non-asymptotic relations between the limiting measures for different $q$,
  which are related to the celebrated Markov-Krein correspondence.
\end{abstract}

\section{Introduction}

\subsection{Overview}

In the current article, we introduce a new family of probability measures that
encode random signatures and study its asymptotics. Let $\hat{U} (N)$ be the
set of all signatures $\mathlambda (N)$, i.e. $N$-tuples $\mathlambda (N) =
(\mathlambda_1 (N), \ldots, \mathlambda_N (N)) \in \mathbb{Z}^N$ of
non-increasing integers. One of key conections that motivates the study of
such simple objects is that they are in one-to-one correspondence with the
isomorphism classes of the irreducible representations of the unitary group $U
(N)$ of all $N \times N$ complex unitary matrices (see e.g. {\cite{B37}}).

We study the global asymptotics of $\mathlambda (N)$, as $N \rightarrow
\infty$, proving among other the Law of Large Numbers for discrete probability
measures, parametrized by random $\mathlambda (N)$. For this purpose we
introduce the probability measures
\begin{equation}
  m_{N, P \text{} P (q)} [\mathlambda (N)] \assign \frac{1}{N} \sum_{i = 1}^N
  \left( \prod_{j \neq i} \frac{(\mathlambda_i (N) - i) - (\mathlambda_j (N) -
  j) - q}{(\mathlambda_i (N) - i) - (\mathlambda_j (N) - j)} \right) \delta
  \left( \frac{\mathlambda_i (N) + N - i}{N} \right)
  \label{PerelomovPopovmeasure},
\end{equation}
where $q \in [- 1, 1]$. Although it is clear that for $q = 0$ this is a very
natural measure, up to the author's knowledge, this measure has not been
considered in the literature before for $q \neq 0, 1$.

Our theorems are based on general assumptions for probability measures on
$\hat{U} (N)$, related to a representation-theoretic notion of Fourier
transform for such measures. We recall that a character
$\tmmathbf{\chi}^{\pi_N}  : U (N) \rightarrow \mathbb{C}$ of a
finite-dimensional representation $\pi_N$ of $U (N)$ is defined by
$\tmmathbf{\chi}^{\pi_N} (U) = \tmop{Tr} (\pi_N (U))$, for every $U \in U
(N)$. Since $U \in U (N)$ is diagonalizable $\tmmathbf{\chi}^{\pi_N} (U)$
depends only on the spectrum of $U$ and $\tmmathbf{\chi}^{\pi_N}$ is a
function of $N$ variables on the unit circle. For $\mathlambda (N) \in \hat{U}
(N)$ we denote by $\tmmathbf{\chi}^{\mathlambda (N)}$ the character of the
irreducible representation $\pi^{\mathlambda (N)}$ that corresponds to the
signature $\mathlambda (N)$. These characters are the rational Schur
functions,
\[ \tmmathbf{\chi}^{\mathlambda (N)} (u_1, \ldots, u_N) = \frac{\det
   (u_i^{\mathlambda_j (N) + N - j})_{1 \leq i, j \leq N}}{\det (u_i^{N -
   j})_{1 \leq i, j \leq N}} = \frac{\det (u_i^{\mathlambda_j (N) + N - j})_{1
   \leq i, j \leq N}}{\prod_{1 \leq i < j \leq N} (u_i - u_j)} . \]
Let $\varrho (N)$ be a probability measure on $\hat{U} (N)$. The Schur
generating function $S_{\varrho (N)}$, with respect to $\varrho (N)$, is
defined by
\[ S_{\varrho (N)} (u_1, \ldots, u_N) \assign \mathbb{E}_{\mathlambda (N)}
   \left[ \frac{\tmmathbf{\chi}^{\mathlambda (N)} (u_1, \ldots,
   u_N)}{\tmmathbf{\chi}^{\mathlambda (N)} (1, \ldots, 1)} \right] =
   \sum_{\mathlambda (N) \in \hat{U} (N)} \varrho (N) [\mathlambda (N)] 
   \frac{\tmmathbf{\chi}^{\mathlambda (N)} (u_1, \ldots,
   u_N)}{\tmmathbf{\chi}^{\mathlambda (N)} (1, \ldots, 1)} . \]
In the following, we assume that the Schur generating functions converge
uniformly in a neighborhood of $1^N \assign (1, \ldots, 1)$. This can be
guaranteed under very mild assumptions for $\varrho (N)$ (see e.g.
{\cite{B4}}). One can think of $S_{\varrho (N)}$ as a Fourier transform on $U
(N)$ because it is analogous to the definition of the Fourier transform of a
probability measure on $\mathbb{R}$, in the sense that they are expectations
of random characters.

We denote by $m_{N, P \text{} P (q)} [\varrho (N)]$ the random probability
measure (\ref{PerelomovPopovmeasure}), where $\mathlambda (N)$ is distributed
according to $\varrho (N)$. Our first main result is the Law of Large Numbers
(LLN) for $m_{N, P \text{} P (q)} [\varrho (N)] .$

\begin{theorem}
  \label{MAINTHEOREM}Let $\varrho (N)$, $N \in \mathbb{N}$, be a sequence of
  probability measures on $\hat{U} (N)$ such that
  \[ \lim_{N \rightarrow \infty} \partial_{i_0}^k \left( \frac{1}{N} \log
     \text{} S_{\varrho (N)} \right) (1^N) = a_k \text{\quad and\quad} \lim_{N
     \rightarrow \infty} \partial_{i_1} \ldots \partial_{i_m} \left(
     \frac{1}{N} \log \text{} S_{\varrho (N)} \right) (1^N) = 0, \]
  for every $k \in \mathbb{N}$ and $i_0, \ldots, i_m \in \{1, \ldots, N\}$
  with $|\{i_1, \ldots, i_m \}| \geq 2$. Moreover assume that the power series
  \[ \Psi (z) \assign \sum_{k = 0}^{\infty} a_k  \frac{(z - 1)^k}{k!}, \]
  converges in a neighborhood of $1$. Then the sequence of random measures
  $m_{N, P \text{} P (q)} [\varrho (N)]$ converges as $N \rightarrow \infty$
  in probability, in the sense of moments to a deterministic probability
  measure $\tmmathbf{\mu}^{(q)}$ on $\mathbb{R}$ whith moments
  \begin{equation}
    \tmmathbf{\mu}_k^{(q)} = \sum_{m = 0}^k \binom{k}{m} \frac{1}{(m + 1) !} 
    \left. \frac{\mathd^m}{\mathd u^m} (u^{k - q} (\Psi' (u))^{k - m})
    \right|_{u = 1} . \label{ropes}
  \end{equation}
  For $q, q' \in [- 1, 1]$ the limiting measures of $m_{N, P \text{} P (q)}
  [\varrho (N)]$, $m_{N, P \text{} P (q')} [\varrho (N)]$ are related through
  \begin{equation}
    \left( 1 - q \sum_{k = 0}^{\infty} \tmmathbf{\mu}_k^{(q)} z^{k + 1}
    \right)^{1 / q} = \left( 1 - q' \sum_{k = 0}^{\infty}
    \tmmathbf{\mu}_k^{(q')} z^{k + 1} \right)^{1 / q'} .
    \label{TRANSFORMATION}
  \end{equation}
\end{theorem}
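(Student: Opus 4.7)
The plan is to reduce the moments of $m_{N, PP(q)}[\lambda(N)]$ to derivatives of $S_{\varrho(N)}$ at $u = 1^N$, extending the Bufetov-Gorin strategy by an extra residue step that absorbs the $q$-weights. The starting point is the residue identity
\[
\sum_{i=1}^N f(\ell_i) \prod_{j \neq i} \frac{\ell_i - \ell_j - q}{\ell_i - \ell_j} = -\frac{1}{2\pi i q}\oint_C f(z) \prod_{j=1}^N \left(1 - \frac{q}{z - \ell_j}\right) dz,
\]
valid for any polynomial $f$ and distinct integers $\ell_1, \ldots, \ell_N$, obtained from the identity $\prod_j(\ell_i - \ell_j - q) = -q \prod_{j \neq i}(\ell_i - \ell_j - q)$ via residues at $z = \ell_i$. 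Setting $\ell_j = \lambda_j(N) + N - j$, $f(z) = (z/N)^k$, and rescaling $z = Nw$ recasts the random $k$-th moment of $m_{N, PP(q)}[\lambda(N)]$ as $M_k^{(N,q)}[\lambda(N)] = -\frac{1}{2\pi i q}\oint_{C'} w^k \prod_j(1 - \frac{q/N}{w - \ell_j/N})\,dw$.

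The bridge to $S_{\varrho(N)}$ comes from the operator $O(w) := \prod_{j=1}^N(1 - \frac{q/N}{w - u_j \partial_{u_j}/N})$: since $u_j\partial_{u_j}$ acts diagonally by $\ell_j$ on $u_1^{\ell_1}\cdots u_N^{\ell_N}$ and $O(w)$ is symmetric under permutations of the $u$'s, $O(w)$ commutes with antisymmetrisation and hence $O(w)(V \chi^{\lambda(N)}) = \prod_j(1 - \frac{q/N}{w - \ell_j/N})\, V \chi^{\lambda(N)}$, where $V(u) = \prod_{i<j}(u_i - u_j)$. Applying $O(w)$ to $V\cdot S_{\varrho(N)}$, dividing by $V$ (legitimate since $O$ preserves antisymmetry), and evaluating at $u = 1^N$ via Taylor expansion around the zero of $V$ yields $\mathbb{E}_{\lambda(N)}[\prod_j(1 - \frac{q/N}{w - \ell_j/N})]$. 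Expanding the logarithm $\log\prod_j(1 - \frac{q/N}{w - \ell_j/N}) = -\frac{q}{N}\sum_j (w - \ell_j/N)^{-1} + O(1/N)$, the single-variable hypothesis pins down the deterministic limit of $\frac{1}{N}\sum_j(w - \ell_j/N)^{-1}$ as a function $G_\infty(w)$ (the Stieltjes transform of $\mu^{(0)}$) expressible in terms of $\Psi'$ via the Bufetov-Gorin $q=0$ LLN, while the vanishing of mixed partials of $\log S_{\varrho(N)}$ at $1^N$ supplies the concentration needed for the expectation of the product to factorise.

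One thus obtains $\mu_k^{(q)} = -\frac{1}{2\pi i q}\oint w^k \exp(-qG_\infty(w))\,dw$. Substituting $w = 1/u$, deforming the resulting $u$-contour to infinity (where the integrand decays faster than $1/u$) and picking up the residue at $u = 0$ (the only exterior singularity) rewrites this as $\mu_k^{(q)} = -\frac{1}{q}[u^{k+1}]\exp(-qG_\infty(1/u))$. The factor $u^{-q}$ in (\ref{ropes}) arises from a $q$-th power inside $\exp(-qG_\infty(1/u))$ under the Bufetov-Gorin reparametrisation linking $G_\infty(1/u)$ to $\Psi'(u)$ near $u = 1$; binomial expansion of $u^{k-q}(\Psi'(u))^{k-m}$ around $u = 1$ together with extraction of the $[u^{k+1}]$ coefficient then recovers formula (\ref{ropes}).

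The transformation (\ref{TRANSFORMATION}) drops out at once: summing $z^{k+1}\mu_k^{(q)}$ over $k \geq 0$ gives $\sum_k \mu_k^{(q)} z^{k+1} = -\frac{1}{q}(\exp(-qG_\infty(1/z)) - 1)$, whence $(1 - q\sum_k \mu_k^{(q)} z^{k+1})^{1/q} = \exp(-G_\infty(1/z))$, manifestly independent of $q$; equating for $q$ and $q'$ yields (\ref{TRANSFORMATION}). Convergence in probability (rather than only in expectation) is secured by bounding the variance of $M_k^{(N,q)}[\lambda(N)]$ via the same operator formalism applied twice (in two contour variables $w, w'$), the required smallness coming again from the mixed-derivative hypothesis. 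I expect the main technical obstacle to be the clean identification of $G_\infty$ in terms of $\Psi'$, together with the uniform control along the contour needed to interchange the $N \to \infty$ limit with the integral and to justify the evaluation-at-$1^N$ of the $1/V$-carrying operator; both rely on the analyticity of $\Psi$ in a neighborhood of $1$.
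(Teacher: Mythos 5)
Your residue identity, the eigenoperator $O(w)$, and the passage to $\mathbb{E}\bigl[\prod_j\bigl(1-\tfrac{q/N}{w-\ell_j/N}\bigr)\bigr]\to\exp(-qG_{\infty}(w))$ are all sound, and they reproduce in contour-integral language exactly the algebra the paper isolates in Proposition \ref{Prop9}: the product $\prod_j\frac{z-\ell_j-q}{z-\ell_j}$ is the generating function $\frac1q-\frac1q\prod_i\frac{1-(x_i+q)z}{1-x_iz}$ of the symmetric polynomials $m_k^{(q)}$, which the paper uses both to reduce convergence in probability to the $q=0$ case and to prove (\ref{TRANSFORMATION}) non-asymptotically (Theorem \ref{ata}). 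Your derivation of (\ref{TRANSFORMATION}) is therefore correct and essentially the paper's. Note, though, that once you have the residue identity you do not need $O(w)$ or the two-contour variance bound at all: the $k$-th moment of $m_{N,PP(q)}[\lambda(N)]$ is a universal polynomial in the first $k+1$ moments of $m_{N,PP(0)}[\lambda(N)]$ up to $O(1/N)$, so the entire LLN (in probability) transfers from the Bufetov--Gorin $q=0$ result, which is what the paper does; working with the polynomial identity also sidesteps the genuine technical problem that the contour must enclose all $\ell_j/N$ uniformly in $N$, while the hypotheses give convergence of moments but no a priori uniform bound on the supports.

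The genuine gap is the derivation of the explicit moment formula (\ref{ropes}), which you dispatch in one sentence ("the factor $u^{-q}$ arises from a $q$-th power \ldots binomial expansion \ldots recovers (\ref{ropes})"). This is the least trivial part of the theorem and is not a proof as written. What is actually needed is: (i) not merely the $q=0$ moment formula but the Bufetov--Gorin identification of the functional inverse of $G_{\infty}$, namely $G_{\infty}^{-1}(y)=e^{y}\Psi'(e^{y})+\frac{e^{y}}{e^{y}-1}$ (equation (\ref{16})); (ii) the change of variables $y=G_{\infty}(w)$, $u=e^{y}$ in your contour integral $-\frac{1}{2\pi i q}\oint w^{k}e^{-qG_{\infty}(w)}\,dw$, which turns $e^{-qG_{\infty}(w)}$ into $u^{-q}$ and, after an integration by parts in $k+1$, yields $\frac{1}{2\pi i(k+1)}\oint_{u=1}u^{k-q}\bigl(\Psi'(u)+\frac{1}{u-1}\bigr)^{k+1}du$; and (iii) the (routine but necessary) verification that this contour integral equals the finite sum in (\ref{ropes}). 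None of (i)--(iii) appears in your argument, and "binomial expansion of $u^{k-q}(\Psi'(u))^{k-m}$ around $u=1$" does not by itself connect $\Psi'$ to $G_{\infty}$. The paper avoids this inversion entirely by proving (\ref{ropes}) directly: it applies the differential operator $\mathcal{D}_k^{U(N),q}$ to $S_{\varrho(N)}$, writes $S_{\varrho(N)}=\exp\bigl(N\cdot\frac1N\log S_{\varrho(N)}\bigr)$, and reads off the coefficient of $N^{k+1}$ (Theorem \ref{12}); the $R$-transform description is then obtained separately via a second operator $\mathfrak{D}_k^{U(N),q}$ (Theorem \ref{kolpa}). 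To complete your route you must either carry out (i)--(iii) or supply an independent direct computation of the $N^{k+1}$ coefficient as in the paper.
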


The representation-theoretic interpretation of Schur generating functions
leads to applications of Theorem \ref{MAINTHEOREM} in asymptotic
representation theory of the unitary group. More precisely, we provide new
results for the decomposition into irreducible components of tensor products
of irreducible representations of large unitary groups. For $q = 0, 1$,
similar or more general results have already been shown in this direction
{\cite{B20}}, {\cite{B1}}, {\cite{B3}}, {\cite{B4}}. For a finite-dimensional
representation $\pi_N$ of $U (N)$ we have a decomposition into irreducible
components:
\[ \pi_N = \bigoplus_{\mathlambda (N) \in \hat{U} (N)} c_{\mathlambda (N)}
   \pi^{\mathlambda (N)}, \]
which gives rise to the sequence of probability measures on $\hat{U} (N)$
\[ \varrho_{\pi_N} [\mathlambda (N)] = \frac{c_{\mathlambda (N)} \dim
   (\pi^{\mathlambda (N)})}{\dim (\pi_N)} \text{, \quad for every }
   \mathlambda (N) \in \hat{U} (N) . \]
In the above $c_{\mathlambda (N)}$ are the multiplicities and dim stands for
the dimension of the representation. For $\mathlambda_1 (N), \mathlambda_2 (N)
\in \hat{U} (N)$ and $\pi_N = \pi^{\mathlambda_1 (N)} \otimes
\pi^{\mathlambda_2 (N)}$ the fact that $S_{\varrho_{\pi_N}} =
S_{\varrho_{\pi^{\mathlambda_1 (N)}}} \cdot S_{\varrho_{\pi^{\mathlambda_2
(N)}}}$ leads to the following corollary.

\begin{corollary}
  \label{COROLLARYMAIN}Let $\mathlambda_1 (N), \mathlambda_2 (N) \in \hat{U}
  (N)$, $N \in \mathbb{N}$, be two sequences of signatures that satisfy some
  technical assumptions, including that
  \[ \lim_{N \rightarrow \infty} m_{N, P \text{} P (q)} [\mathlambda_i (N)]
     =\tmmathbf{\mu}_i \text{\quad for } i = 1, 2, \]
  where the above convergence is in the sense of moments. Then, if $\pi_N =
  \pi^{\mathlambda_1 (N)} \otimes \pi^{\mathlambda_2 (N)}$, the sequence of
  random measures $m_{N, P \text{} P (q)} [\varrho_{\pi_N}]$ converges as $N
  \rightarrow \infty$ in probability, in the sense of moments to a
  deterministic probability measure on $\mathbb{R}$ which we denote by
  $\tmmathbf{\mu}_1 \otimes_q \tmmathbf{\mu}_2$, and it is such that
  \[ (\tmmathbf{\mu}_1 \otimes_q \tmmathbf{\mu}_2) \boxplus \beta (1 - q, 1 +
     q) =\tmmathbf{\mu}_1 \boxplus \tmmathbf{\mu}_2, \]
  where $\beta (1 - q, 1 + q)$ is the beta distribution with parameters $1 -
  q, 1 + q \geq 0$ and $\boxplus$ stands for the free convolution.
\end{corollary}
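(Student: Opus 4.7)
The plan is to reduce the statement to Theorem \ref{MAINTHEOREM} combined with the transformation identity (\ref{TRANSFORMATION}), exploiting the fact that $\log S$ adds under tensor products, and then to benchmark against the known $q = 0$ case of Bufetov--Gorin.

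First, I would verify the hypotheses of Theorem \ref{MAINTHEOREM} for $\varrho_{\pi_N}$. The factorization $S_{\varrho_{\pi_N}} = S_{\varrho_{\pi^{\mathlambda_1(N)}}} S_{\varrho_{\pi^{\mathlambda_2(N)}}}$ makes $\frac{1}{N}\log S_{\varrho_{\pi_N}}$ additive, so each single-index iterated derivative and each mixed derivative splits as a sum, and the required limits transfer from the $\mathlambda_i(N)$ to the tensor product, with limiting series $\Psi = \Psi_1 + \Psi_2$. This already produces the deterministic limit, which we call $\tmmathbf{\mu}_1 \otimes_q \tmmathbf{\mu}_2$, with moments obtained by plugging $\Psi_1 + \Psi_2$ into (\ref{ropes}).

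Next, I would apply (\ref{TRANSFORMATION}) to the shared sequence $\varrho_{\pi_N}$ at the parameter pair $(q, q')$ and let $q' \to 0$. Writing $M^{\nu}(z) \assign \sum_{k \geq 0} \nu_k z^{k+1}$, the right-hand side tends to $\exp(-M^{\tmmathbf{\mu}_1 \otimes_0 \tmmathbf{\mu}_2}(z))$, giving
\[
(1 - q M^{\tmmathbf{\mu}_1 \otimes_q \tmmathbf{\mu}_2}(z))^{1/q} = \exp\bigl(-M^{\tmmathbf{\mu}_1 \otimes_0 \tmmathbf{\mu}_2}(z)\bigr).
\]
The $q = 0$ instance of Theorem \ref{MAINTHEOREM} is the Bufetov--Gorin identification of tensor products with free convolution, so $\tmmathbf{\mu}_1 \otimes_0 \tmmathbf{\mu}_2 = \tmmathbf{\mu}_1 \boxplus \tmmathbf{\mu}_2$. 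The beta distribution enters through the trivial sequence $\mathlambda(N) = (0, \ldots, 0)$, corresponding to the trivial representation, for which $\Psi \equiv 0$: a direct asymptotic analysis of the Gamma-ratio weights appearing in (\ref{PerelomovPopovmeasure}), or equivalently the evaluation of (\ref{ropes}) at $\Psi = 0$, identifies the limit at parameter $q$ as $\beta(1-q, 1+q)$ and at parameter $0$ as the uniform distribution on $[0,1]$; feeding this back into (\ref{TRANSFORMATION}) yields the closed form $M^{\beta(1-q,1+q)}(z) = (1 - (1-z)^q)/q$.

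The claimed identity $(\tmmathbf{\mu}_1 \otimes_q \tmmathbf{\mu}_2) \boxplus \beta(1-q, 1+q) = \tmmathbf{\mu}_1 \boxplus \tmmathbf{\mu}_2$ then follows by translating these moment-series relations into a statement about Cauchy transforms via $G(z) = z^{-1} M(1/z)$, inverting, and recognizing the linearization property of free convolution via the $R$-transform. The main obstacle is precisely this last translation: passing from the logarithmic/exponential form $(1 - qM)^{1/q}$ to a clean statement at the level of functional inverses, and matching it with the free cumulants of $\beta(1-q, 1+q)$. The cleanest route will probably be to compute $R_{\beta(1-q,1+q)}$ directly from the closed form of $M^{\beta(1-q,1+q)}$ and then verify the additivity $R_{\tmmathbf{\mu}_1 \boxplus \tmmathbf{\mu}_2} = R_{\tmmathbf{\mu}_1 \otimes_q \tmmathbf{\mu}_2} + R_{\beta(1-q,1+q)}$ using the displayed identity above.
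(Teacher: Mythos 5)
Your first step is fine and matches the paper's implicit argument: multiplicativity of Schur generating functions gives $\Psi = \Psi_1 + \Psi_2$, and Theorem \ref{MAINTHEOREM} then produces the deterministic limit $\tmmathbf{\mu}_1 \otimes_q \tmmathbf{\mu}_2$. But the pivot of your argument contains a genuine error: the identification $\tmmathbf{\mu}_1 \otimes_0 \tmmathbf{\mu}_2 = \tmmathbf{\mu}_1 \boxplus \tmmathbf{\mu}_2$ is false. The $q=0$ operation of Bufetov--Gorin is the \emph{quantized} free convolution, not the free convolution; its linearizing transform is $R^{q u a n t(0)}_{\tmmathbf{\mu}}(z) = R_{\tmmathbf{\mu}}(z) - \frac{e^z}{e^z-1} + \frac{1}{z}$, and the correct $q=0$ statement is $(\tmmathbf{\mu}_1 \otimes_0 \tmmathbf{\mu}_2) \boxplus u[0,1] = \tmmathbf{\mu}_1 \boxplus \tmmathbf{\mu}_2$ (i.e.\ exactly the corollary at $q=0$, with $\beta(1,1)=u[0,1]$). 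It is only at $q=1$ that $\otimes_q$ coincides with $\boxplus$, since $\beta(0,2)=\delta(0)$; the paper stresses precisely this distinction between the $q=0$ and $q=1$ cases. With the wrong identification inserted, your displayed relation $(1 - qM^{\tmmathbf{\mu}_1\otimes_q\tmmathbf{\mu}_2})^{1/q} = \exp(-M^{\tmmathbf{\mu}_1\boxplus\tmmathbf{\mu}_2})$ is not what (\ref{TRANSFORMATION}) gives, and the subsequent $R$-transform bookkeeping cannot close.

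Even after repairing this (say by taking $q'=1$, or by keeping $\otimes_0$ and invoking the genuine $q=0$ result), what remains is exactly the step you flag as ``the main obstacle'': converting the relation $(1-qG_{\tmmathbf{\mu}^{(q)}}(z))^{1/q} = 1 - G_{\tmmathbf{\mu}^{(1)}}(z)$ between Stieltjes transforms into the free-convolution identity. That conversion is nontrivial and is carried out in the paper via the maps $M K_{(q)}$ and $\mathfrak{P}$ in Theorem \ref{Xri}. The route the paper actually intends for this corollary is shorter and avoids (\ref{TRANSFORMATION}) entirely: Theorem \ref{kolpa} yields the explicit formula (\ref{xana}), $R^{(q)}(z) = e_q(z)\Psi'(e_q(z)) + R_{\beta(1-q,1+q)}(z)$, which is linear in $\Psi$ up to the fixed summand $R_{\beta(1-q,1+q)}$. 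Plugging in $\Psi = \Psi_1+\Psi_2$ gives $R_{\tmmathbf{\mu}_1\otimes_q\tmmathbf{\mu}_2} + R_{\beta(1-q,1+q)} = R_{\tmmathbf{\mu}_1} + R_{\tmmathbf{\mu}_2}$ immediately, which is the claim. I recommend you restructure the proof around (\ref{xana}) rather than the moment-series transformation.
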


The technical assumptions mentioned in the above corollary will not be
important for us due to the fact that the convergence for $m_{N, P \text{} P (q)}
[\mathlambda (N)]$ is equivalent to convergence for $m_{N, P \text{} P (0)}
[\mathlambda (N)]$. For more details we refer to {\cite{B1}}.

Staring with three sequences $\varrho_1 (N), \varrho_2 (N), \varrho_3 (N)$ of
probability measures on $\hat{U} (N)$ such that they satisfy the assumptions of Theorem \ref{MAINTHEOREM} and
$S_{\varrho_3 (N)} = S_{\varrho_1 (N)} \cdot S_{\varrho_2 (N)}$ one can ask if
there is a meaningful expression for the limit of $m_{N, P \text{} P (q)}
[\varrho_3 (N)]$ in terms of the limits of $m_{N, P \text{} P (q)} [\varrho_1
(N)]$ and $m_{N, P \text{} P (q)} [\varrho_2 (N)]$. To answer this question we
make the connection with free probability and we introduce a $q$-deformation
of free convolution (see Theorem \ref{kolpa}).

The techniques used in the current article lead to the asymptotic expansion for
the measure $\mathbb{E} [m_{N, P \text{} P (q)} [\varrho (N)]]$. We
also study the second leading term of this expansion. We will often refer to
it as the correction (to the Law of Large Numbers) or as the infinitesimal limit. To
make it precise, given a sequence of probability measures $\varrho (N)$ on
$\hat{U} (N)$, we define the sequence of linear functionals
\[ \varphi_N^{(q)} (P) \assign \mathbb{E}_{\mathlambda (N)} \left[
   \int_{\mathbb{R}} P (t) m_{N, P \text{} P (q)} [\mathlambda (N)] (d \text{}
   t) \right] \text{, \quad} P \in \mathbb{C} [\tmmathbf{t}] . \]
By the infinitesimal limit (of $\{\varphi_N^{(q)} \}_{N \in \mathbb{N}}$), we mean
the triple $(\mathbb{C}[\tmmathbf{t}], \varphi_q, \varphi_q')$, where
\[ \varphi_q (P) = \lim_{N \rightarrow \infty} \varphi_N^{(q)} (P) \text{\quad
   and\quad} \varphi_q' (P) = \lim_{N \rightarrow \infty} N (\varphi_N^{(q)}
   (P) - \varphi_q (P)) \text{, \quad for } P \in \mathbb{C} [\tmmathbf{t}] .
\]
\begin{theorem}
  \label{infinitesimaltransform}Let $\varrho (N)$, $N \in \mathbb{N}$, be a
  sequence of probability measures on $\hat{U} (N)$ such that
  \[ \lim_{N \rightarrow \infty} \left( \partial_{i_0}^k \log \text{}
     S_{\varrho (N)} (1^N) - N \text{} a_k \right) = b_k \text{\quad and\quad}
     \lim_{N \rightarrow \infty} \partial_{i_1} \ldots \partial_{i_m} \log
     \text{} S_{\varrho (N)} (1^N) = 0, \]
  for every $k \in \mathbb{N}$ and $i_0, \ldots, i_m \in \{1, \ldots, N\}$
  with $|\{i_1, \ldots, i_m \}| \geq 2$. Moreover, assume that $(a_k)_{k \in
  \mathbb{N}}, (b_k)_{k \in \mathbb{N}}$ are such that
  \[ \Psi (z) = \sum_{k = 0}^{\infty} a_k  \frac{(z - 1)^k}{k!} \text{\quad
     and\quad} \Phi (z) = \sum_{k = 0}^{\infty} b_k  \frac{(z - 1)^k}{k!}, \]
  converge in a neighborhood of $1$. Then, for every $q \in [- 1, 1]$, there
  exists an infinitesimal limit $(\mathbb{C}[\tmmathbf{t}], \varphi_q,
  \varphi'_q)$ of $\{ \varphi_N^{(q)} \}_{N \in \mathbb{N}}$. The moments
  $\varphi_q (\tmmathbf{t}^k)$ are given by (\ref{ropes}). We also have the
  explicit formulas
  \[ \varphi_q' (\tmmathbf{t}^k) = \sum_{m = 0}^{k - 1} \binom{k}{m + 1}
     \frac{1}{m!}  \left. \frac{\mathd^m}{\mathd u^m} \left( u^{k - q} \left(
     \Phi' (u) - \frac{1 - q}{2 u} \right) (\Psi' (u))^{k - m - 1} \right)
     \right|_{u = 1} . \]
  Moreover, for $\tmmathbf{\mu}_k^{(q)} = \varphi_q (\tmmathbf{t}^k)$,
  $\tmmathbf{\mu}_k^{(0)} = \varphi_0 (\tmmathbf{t}^k)$,
  ${\tmmathbf{\mu}_k^{(q)}}' = \varphi_q' (\tmmathbf{t}^k)$,
  ${\tmmathbf{\mu}_k^{(0)}}' = \varphi_0' (\tmmathbf{t}^k)$, $k \in
  \mathbb{N}$, we have that
  \begin{equation}
    \exp \left( q \sum_{k = 0}^{\infty} \tmmathbf{\mu}_k^{(0)} z^{k + 1}
    \right) \sum_{k = 0}^{\infty} {\tmmathbf{\mu}_k^{(q)}}' z^{k + 1} =
    \sum_{k = 0}^{\infty} {\tmmathbf{\mu}_k^{(0)}}' z^{k + 1} + \frac{q}{2}
    \sum_{k = 0}^{\infty} (k + 1) \tmmathbf{\mu}_k^{(0)} z^{k + 2} .
    \label{infinitesimalSt}
  \end{equation}
\end{theorem}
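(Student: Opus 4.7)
The plan is to push the contour-integral technique used for Theorem \ref{MAINTHEOREM} one order finer in $1/N$. As in the proof of the Law of Large Numbers, I expect the moments $\varphi_N^{(q)}(\tmmathbf{t}^k)$ can be expressed, via symmetrization of the multivariable differential operator encoding $m_{N, PP(q)}[\mathlambda(N)]$ acting on $S_{\varrho(N)}$ at $1^N$, as a single-variable residue around $u=1$ whose integrand involves the factor $u^{k-q}$ (coming from the $q$-shifted Perelomov--Popov weight) and polynomial expressions in $\partial_u^j(\frac{1}{N}\log S_{\varrho(N)})(u,1,\dots,1)$. The mixed-derivative assumption $\partial_{i_1}\cdots\partial_{i_m}\log S_{\varrho(N)}(1^N)=o(1)$ for $m\geq 2$, combined with the refined expansion $\partial_{i_0}^k\log S_{\varrho(N)}(1^N) = N a_k + b_k + o(1)$, translates locally into
\[
\log S_{\varrho(N)}(u,1,\dots,1) = N\Psi(u) + \Phi(u) + o(1)
\]
uniformly on a small disc around $1$, which is what justifies expanding the residue to two orders in $N$.

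Next I would extract these two leading orders. The leading order in $N$ reproduces (\ref{ropes}) and hence gives $\varphi_q(\tmmathbf{t}^k) = \tmmathbf{\mu}_k^{(q)}$. The subleading order has two sources: the direct replacement of $\Psi'(u)$ by $\Phi'(u)$ in one of the $(k-m)$ factors coming from $(\Psi'(u))^{k-m}$, and an Euler--Maclaurin-type geometric correction produced by the shift $N-i$ inside the delta-mass in (\ref{PerelomovPopovmeasure}) combined with the $q$-perturbed product; I expect this latter correction to contribute precisely the clean term $-\frac{1-q}{2u}$ appearing in the claimed formula for $\varphi_q'(\tmmathbf{t}^k)$. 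The combinatorial bookkeeping then pairs each factor of $\Phi'(u) - \frac{1-q}{2u}$ with $(k-m-1)$ copies of $\Psi'(u)$, yielding the binomial $\binom{k}{m+1}$ and the prefactor $\frac{1}{m!}$; existence of the infinitesimal limit falls out from absolute convergence on the disc.

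For the relation (\ref{infinitesimalSt}), I would use the $q'\to 0$ specialization of (\ref{TRANSFORMATION}), namely
\[
1 - q\sum_{k \geq 0} \tmmathbf{\mu}_k^{(q)} z^{k+1} = \exp\Bigl(-q\sum_{k \geq 0}\tmmathbf{\mu}_k^{(0)} z^{k+1}\Bigr),
\]
so that the exponential prefactor on the left side of (\ref{infinitesimalSt}) equals $(1 - q\sum \tmmathbf{\mu}_k^{(q)} z^{k+1})^{-1}$. Substituting the residue formulas for $\tmmathbf{\mu}_k^{(q)}$, ${\tmmathbf{\mu}_k^{(q)}}'$, and ${\tmmathbf{\mu}_k^{(0)}}'$ and summing the geometric series $\sum_k z^{k+1}(u-1)^{-k-1}$ reduces both sides of (\ref{infinitesimalSt}) to the same single contour integral, with the Euler--Maclaurin term $\frac{1-q}{2u}$ producing exactly the residual $\frac{q}{2}\sum (k+1)\tmmathbf{\mu}_k^{(0)} z^{k+2}$ on the right. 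The main obstacle I foresee is pinning down that $\frac{1-q}{2u}$ coefficient precisely: this is what distinguishes the infinitesimal statement from the LLN and requires either a careful Euler--Maclaurin expansion of the sum over $i$ in (\ref{PerelomovPopovmeasure}) or a direct combinatorial identity for the $q$-deformed residue at $u=1$.
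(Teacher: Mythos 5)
Your outline matches the architecture of the paper's argument (the explicit formula for $\varphi_q'$ is the content of Theorem \ref{meis}, and the generating-function identity is handled after Theorem \ref{ata}), but as written the proposal has a genuine gap at precisely the step you flag yourself: the coefficient $-\frac{1-q}{2u}$ is asserted, not derived. In the paper this term does \emph{not} come from an Euler--Maclaurin expansion of the sum over $i$ in (\ref{PerelomovPopovmeasure}) --- the weights $\prod_{j\neq i}(\cdots)$ are not smooth functions of $i/N$, so that route does not close. It comes instead from a cancellation inside the operator expansion: the $n=k-1$ summand of $\mathcal{D}_k^{U(N),q}$ carries the coefficient $\tfrac{k(k-1)}{2}$ and contributes $\tfrac{k-1}{2}\,u^{-1}$ to the subleading term, while the $O(N^{-1})$ deficit in the number of ordered distinct tuples $(b_0,\dots,b_m)$ in the symmetrized sums (\ref{esta}) contributes $-\tfrac{k-q}{2}\,u^{-1}$ (plus a $\Psi''$ term that cancels against the $l_2=1$ contributions); the sum $\tfrac{k-1}{2}-\tfrac{k-q}{2}=-\tfrac{1-q}{2}$ is the coefficient you need. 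Without carrying out this bookkeeping the formula for $\varphi_q'(\tmmathbf{t}^k)$ --- and hence everything downstream --- is unproved.

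The derivation of (\ref{infinitesimalSt}) has a second, related gap: the leading-order relation (\ref{TRANSFORMATION}) only constrains $\tmmathbf{\mu}^{(q)}$ and $\tmmathbf{\mu}^{(0)}$, so it cannot by itself produce an identity between the \emph{corrections}. What is actually needed is the exact finite-$N$ identity of Proposition \ref{Prop9}, which in normalized variables $y_i=(\mathlambda_i(N)+N-i)/N$ reads
\[
1-q\sum_{k\geq 0}\Bigl(\int t^k\, m_{N,P P(q)}[\mathlambda(N)](d t)\Bigr)z^{k+1}
=\prod_{i=1}^N\frac{1-(y_i+q/N)z}{1-y_i z}
=\exp\Bigl(\sum_{i=1}^N\log\Bigl(1-\tfrac{q z/N}{1-y_i z}\Bigr)\Bigr),
\]
expanded to order $1/N$: the first-order term of the logarithm gives $-q\,g_N^{(0)}(z)$, and it is the \emph{second}-order term $-\tfrac{q^2}{2N}\cdot\tfrac1N\sum_i\tfrac{z^2}{(1-y_iz)^2}$ that produces exactly $\tfrac{q}{2}\sum_k(k+1)\tmmathbf{\mu}_k^{(0)}z^{k+2}$ after matching $1/N$ coefficients. (One also needs the concentration statement (\ref{VAp}) to replace expectations of these products by products of the deterministic limits.) Your contour-integral verification from the explicit formulas could in principle substitute for this, but it is not carried out, and it would in any case presuppose the unproved $-\frac{1-q}{2u}$ coefficient.
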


Infinitesimal limits of averaged empirical measures of random matrices have
been studied extensively the last few years, in the context of infinitesimal
free probability. We also make the connection with infinitesimal free
probability and we introduce a $q$-deformation of the infinitesimal free
convolution (see Theorem \ref{MN}).

To summarize, in this article on the one hand we prove asymptotic results for
$m_{N, P \text{} P (q)} [\varrho (N)]$, based on
asymptotic conditions for the Schur generating functions $S_{\varrho (N)}$ and
on the other hand we show how the limiting measures that correspond to
different $q \in [- 1, 1]$ are related in a non-asymptotic manner, through
transforms of probability measures on $\mathbb{R}$. We start by explaining a
combinatorial interpretation of the measures $m_{N, P \text{} P (q)}
[\mathlambda (N)]$ showing how they are related to standard symmetric
polynomials (see Proposition \ref{Prop9}). Their combinatorial side is crucial
for the non-asymptotic relations of the limits, which are corollaries of
Newton identities.

We show two different ways to prove the LLN for $m_{N, P \text{} P (q)}
[\varrho (N)]$ (see Theorem \ref{12} and Theorem \ref{kolpa}). In both, we use
the same technique which is based on applying differential operators to
analytic and symmetric functions. Although both of them give a very precise
description of the limiting measures through their moments, the second one
allows us to go a step further and determine precisely their free cumulants,
which cannot be extracted in a straightforward way from the formulas for the
moments that we get from the first approach. The explicit formulas for the
free cumulants describe the impact of the conditions of Theorem
\ref{MAINTHEOREM} for the Schur generating functions, on the limiting
measures. They also reveal interesting properties for these measures that lead
to the results of Corollary \ref{COROLLARYMAIN} and motivate an operation
$\otimes_q$ for probability measures on $\mathbb{R}$ with compact support,
that are approximated by $m_{N, P \text{} P (q)} [\mathlambda (N)]$,
$\mathlambda (N) \in \hat{U} (N)$. This operation is strongly related to the
free convolution.

We also investigate asymptotic properties of $m_{N, P \text{} P (q)} [\varrho
(N)]$ under different limit regimes for $S_{\varrho (N)}$. Though these
different limit regimes do not change the limiting measures
$\tmmathbf{\mu}^{(q)}$, they affect interestingly the $1 / N$ correction of
$\tmmathbf{\mu}^{(q)}$, meaning the $\lim_{N \rightarrow \infty} N \left(
\mathbb{E}[m_{N, P \text{} P (q)} [\varrho (N)]] -\tmmathbf{\mu}^{(q)}
\right)$. Similarly to the above, we study the $1 / N$ correction via its
moments using both methods of differential operators mentioned above (see
Theorem \ref{meis} and Theorem \ref{MN}). The key tool for our analysis
is that of infinitesimal free cumulants. We show how the additional
requirements of the second limit regime give rise to a $1 / N$ correction,
determined by very concrete free cumulants/infinitesimal free cumulants. This
leads to results similar to that of Corollary \ref{COROLLARYMAIN} but on the
more general infinitesimal level where the infinitesimal free convolution is
involved. It also motivates a deformation of the infinitesimal free
convolution.

We make the connection of our non-asymptotic relations for the limiting
measures to the celebrated Markov-Krein correspondence (see Theorem
\ref{Xri}). We also point how these non-asymptotic relations emerge from
standard tools of free probability that were used for the understanding of
these measures.

Finally, we also present an example, where using Theorem \ref{9} we compute
explicitly the probability density function of the limit of $m_{N, P \text{} P
(q)} [\varrho (N)]$ (see Example \ref{mainexample}). Interestingly, we get a
probability measure that gives an interpolation between the semicircle
distribution, the Marchenko-Pastur distribution and the one-sided Plancherel
distribution. As far as we know this probability distribution has not appeared
in the literature before. We think that it would be interesting to understand
if it is connected to some random matrix ensemble which gives an interpolation
between the Gaussian unitary ensemble and Wishart matrices.

\subsection{Free convolution and its quantized versions}

We briefly recall the notions of free cumulants and free convolution and we
briefly describe the kind of deformations of free convolution that we consider
in this article. The free convolution is a well studied operation of
probability measures on $\mathbb{R}$ with compact support. To justify the
name, it is a convolution for non-commutative random variables (such as random
matrices or operators on Hilbert spaces) that have a non-commutative notion of
stochastic independence known as freeness (see e.g. {\cite{B15}},
{\cite{B22}}). It plays an essential role to the study of random matrices as
their size goes to infinity.

Compared to the classical convolution, free convolution is a much more
complicated operation of probability measures. Although the $k$-th moment of
the classical convolution $\tmmathbf{\mu}_1 \ast \tmmathbf{\mu}_2$ of two
probability measures $\tmmathbf{\mu}_1, \tmmathbf{\mu}_2$ can be expressed
very simply in terms of the moments of $\tmmathbf{\mu}_1$ and
$\tmmathbf{\mu}_2$, this is not the case for the $k$-th moment of
$\tmmathbf{\mu}_1 \boxplus \tmmathbf{\mu}_2$. In that sense given three
probability measures on $\mathbb{R}$ with compact support, looking their
moments it is hard to deduce whether one of them if the free convolution of
the other two. This problem can be solved by acquiring a free analogue of the
cumulants.

\begin{definition}[Speicher, {\cite{B15}}]
  Let $\tmmathbf{\mu}$ be a probability measure on $\mathbb{R}$ with compact
  support. The free cumulants $(\kappa_n (\tmmathbf{\mu}))_{n \in \mathbb{N}}$
  of $\tmmathbf{\mu}$ are uniquely determined by
  \begin{equation}
    \int_{\mathbb{R}} t^k \tmmathbf{\mu} (d \text{} t) = \sum_{\pi \in
    \tmop{NC} (k)} \prod_{V \in \pi} \kappa_{|V|} (\tmmathbf{\mu}) \text{,
    \quad for every } k \in \mathbb{N} \label{momentcumulantrelations},
  \end{equation}
  where $\tmop{NC} (k)$ stands for the lattice of non-crossing partitions of
  $\{1, \ldots, k\}$.
\end{definition}

We will refer to (\ref{momentcumulantrelations}) as moment-cumulant relation.
Similarly to the classical case, free cumulants linearize the free convolution
(see e.g. {\cite{B15}}), meaning that
\[ \kappa_n (\tmmathbf{\mu}_1 \boxplus \tmmathbf{\mu}_2) = \kappa_n
   (\tmmathbf{\mu}_1) + \kappa_n (\tmmathbf{\mu}_2) \text{, \quad for every }
   n \in \mathbb{N}, \]
Similarly, as we will prove later on, being $\tmmathbf{\mu}_1,
\tmmathbf{\mu}_2$ limits of measures $m_{N, P \text{} P (q)} [\mathlambda
(N)]$, the measure $\tmmathbf{\mu}_1 \otimes_q \tmmathbf{\mu}_2$ of Corollary
\ref{COROLLARYMAIN} is determined by the linearization of another analytic
function. This function will play the role of $R$-transform for $\otimes_q$
and it is given by
\[ R_{\tmmathbf{\mu}}^{q \text{} u \text{} a \text{} n \text{} t (q)} (z) =
   R_{\tmmathbf{\mu}} (z) - R_{\beta (1 - q, 1 + q)} (z) = R_{\tmmathbf{\mu}}
   (z) - \frac{(1 - q \text{} z)^{- 1 / q}}{(1 - q \text{} z)^{- 1 / q} - 1} +
   \frac{1}{z} . \]
We refer to $R_{\tmmathbf{\mu}}^{q \text{} u \text{} a \text{} n \text{} t
(q)}$ as $q$-deformed quantized $R$-transform. Note that
$R_{\tmmathbf{\mu}}^{q \text{} u \text{} a \text{} n \text{} t (1)} =
R_{\tmmathbf{\mu}}$ and $R_{\tmmathbf{\mu}}^{q \text{} u \text{} a \text{} n
\text{} t (- 1)} = R_{\tmmathbf{\mu} \boxplus \delta (- 1)}$.

\subsection{Related works}

The technique of Schur generating functions is not a new one. It has been
developed in the last ten years for the study of the asymptotic behavior of
discrete particle systems and random matrices {\cite{B1}}, {\cite{B3}},
{\cite{B4}}, {\cite{B7}}. Similar approaches exist for generating functions of
Jack polynomials {\cite{B11}}. Related methods for local asymptotics have been
presented in {\cite{B20}}. The LLN for $m_{N, P \text{} P (q)} [\mathlambda
(N)]$ for the particular cases $q = 0, 1$ was firstly proved in {\cite{B1}}
where they also highlighted the connection to free probability. As we already
mentioned, for $q = 1$ one gets the classical $R$-transform but for $q = 0$
the quantized $R$-transform becomes
\[ R_{\tmmathbf{\mu}}^{q \text{} u \text{} a \text{} n \text{} t (0)} (z) =
   R_{\tmmathbf{\mu}} (z) - R_{u [0, 1]} (z) = R_{\tmmathbf{\mu}} (z) -
   \frac{e^z}{e^z - 1} + \frac{1}{z}, \]
where $u [0, 1]$ denotes the uniform measure on $[0, 1]$. In {\cite{B1}} they
proved that to get an operation of probability measures that linearizes
$\tmmathbf{\mu} \mapsto R_{\tmmathbf{\mu}}^{q \text{} u \text{} a \text{} n
\text{} t (0)}$ one has to restrict to compactly supported probability
measures on $\mathbb{R}$, continuous with respect to the Lebesque measure and
such that their Radon-Nikodym derivative is bounded by one.

We introduce the measures $m_{N, P \text{} P (q)} [\mathlambda (N)]$, for $q
\in [- 1, 1]$, to provide an interpolation to the already known results for $q
= 0, 1$. As mentioned, we are not aware of other works defining such measures
and studying their asymptotic properties. Recently, it was proved in
{\cite{B40}} that the moments of $m_{N, P \text{} P (q)} [\mathlambda (N)]$
are eigenvalues of Heckman-Polychronakos operators for symmetric Jack
polynomials.

In {\cite{B1}}, $m_{N, P \text{} P (1)} [\mathlambda (N)]$ is referred to as
Perelomov-Popov measure because it is related to the work of Perelomov and
Popov {\cite{B26}} on the centers of universal enveloping algebras of
classical Lie groups. To keep it similar we will refer to $m_{N, P \text{} P
(q)} [\mathlambda (N)]$ as $q$-deformed Perelomov-Popov measure. Therefore, on
the one hand, for $q = 1$ the Perelomov-Popov measures have a clear algebraic
interpretation, while on the other hand, for $q = 0$, they have been used in the
literature to study geometric properties of tilings {\cite{B1}}, {\cite{B3}},
{\cite{B4}}, {\cite{B45}}, {\cite{B41}}. Or results provide a natural continuous interpolation between these two quite distinct setups.

Relevant approaches with symmetric generating functions exist to study the
spectrum of self-adjoint random matrices as their size goes to infinity
{\cite{B6}}, {\cite{B41}}, {\cite{B25}}, {\cite{B8}}, {\cite{B10}},
{\cite{B33}}. These are mainly based on the asymptotics of the Harish-Chandra
transforms. The framework of random matrices can be seen as the continuous
analogue of ours. It is not clear to the author if some meaningful
$q$-generalization of the empirical spectral distribution could be considered
to play the role of $m_{N, P \text{} P (q)} [\mathlambda (N)]$ for $q \neq 0$.
The infinitesimal limit of $m_{N, P \text{} P (0)} [\mathlambda (N)]$
presented on this article was also studied in {\cite{B41}}, in order to
understand the connection of random signatures to infinitesimal free
probability in an analogous way that the connection to free probability was
understood in {\cite{B1}}, and make the comparison with the results for random
matrices.

Moreover, there exist other transforms, relevant to (\ref{TRANSFORMATION}) in
the literature between measures or distributions, that emerge trying to relate
the limits of random (signed) measures that encode signatures or eigenvalues
{\cite{B6}}, {\cite{B5}}, {\cite{B1}}, {\cite{B32}}, {\cite{B21}},
{\cite{B33}}. It ends up that many of them are related to celebrated
transforms that originate from problems of functional analysis {\cite{B35}},
{\cite{B19}}, {\cite{B18}}, {\cite{B23}}.

\begin{acknowledgments*}
  The author is grateful to Alexey Bufetov for many valuable discussions and
  the encouragement to work on this project. The author is also grateful to
  Vadim Gorin for helpful discussions and Grigori Olshanski for valuable
  comments. The author was partially supported by the European Research
  Council (ERC), Grant Agreement No. 101041499.
\end{acknowledgments*}

\section{Law of Large Numbers for Perelomov-Popov measures}\label{.}

In the current section we prove the Law of Large Numbers for $m_{N, P \text{}
P (q)} [\varrho (N)]$. Before we formulate and prove our results, we start by
introducing some differential operators that will be important for our
analysis. Applying these differential operators to the Schur generating
functions we can compute the moments of $m_{N, P \text{} P (q)} [\mathlambda
(N)]$ in a similar manner that one can compute moments of a random variable
differentiating its characteristic function.

\subsection{Schur generating functions and differential operators}

We start by proving that $m_{N, P \text{} P (q)} [\mathlambda (N)]$ is indeed
a probability measure.

\begin{proposition}
  Let $\mathlambda (N) = (\mathlambda_1 (N) \geq \cdots \geq \mathlambda_N
  (N)) \in \mathbb{Z}^N$ be a signature and $q \in \mathbb{R}$. Then $m_{N, P
  \text{} P (q)} [\mathlambda (N)]$ is a signed measure of total mass $1$ and
  a probability measure for $q \in [- 1, 1]$.
\end{proposition}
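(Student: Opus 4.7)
The plan is to introduce the shifted integer variables $x_i := \lambda_i(N) - i$. Since $\lambda_1(N) \geq \cdots \geq \lambda_N(N)$, these are strictly decreasing integers, so every denominator $x_i - x_j$ appearing in the definition of $m_{N, PP(q)}[\lambda(N)]$ is a nonzero integer. After this reduction, the total mass equals $\frac{1}{N}\sum_{i=1}^N w_i$ with
\[
w_i \assign \prod_{j \neq i} \frac{x_i - x_j - q}{x_i - x_j},
\]
and I have to show $\sum_i w_i = N$ for all real $q$, and $w_i \geq 0$ for every $i$ when $q \in [-1,1]$.

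For the total mass I would use a one-variable partial-fraction identity. Consider the rational function
\[
R(z) \assign \prod_{j=1}^N \frac{z - x_j - q}{z - x_j},
\]
which has simple poles exactly at $z = x_i$ with residues $-q \cdot w_i$ (obtained by evaluating $(z - x_i) R(z)$ at $z = x_i$), and which satisfies $R(\infty) = 1$. Combining the partial-fraction expansion $R(z) - 1 = \sum_i (-q w_i)/(z - x_i)$ with the asymptotic $R(z) - 1 \sim -Nq/z$ as $z \to \infty$ (immediate from comparing the two leading coefficients of numerator and denominator of $R$, or from $\log R(z) = \sum_j \log(1 - q/(z - x_j)) \sim -Nq/z$) forces $\sum_i (-q w_i) = -Nq$. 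For $q \neq 0$ this yields $\sum_i w_i = N$; the case $q = 0$ is trivial since all $w_i = 1$.

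For non-negativity when $q \in [-1, 1]$ I would just check signs factor by factor inside each $w_i$, using the crucial fact that $x_i - x_j$ is an integer of absolute value at least one. For $j < i$ the denominator $x_i - x_j$ is an integer $\leq -1$, and the numerator $x_i - x_j - q$ lies in $[x_i - x_j - 1,\, x_i - x_j + 1] \subseteq (-\infty, 0]$, so the ratio is non-negative. For $j > i$ the denominator is $\geq 1$ and the numerator is $\geq 1 - q \geq 0$, again non-negative. Multiplying these factors gives $w_i \geq 0$. The only mildly delicate step is the asymptotic $R(z) - 1 \sim -Nq/z$, but this is a one-line check; the rest is bookkeeping, and one sees that the integrality of the $x_i$ — the gap of at least one between consecutive values $x_i - x_j$ — is exactly what makes $[-1, 1]$ the correct range of $q$ for positivity.
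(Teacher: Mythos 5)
Your proof is correct, and while the positivity part coincides with the paper's (the paper simply asserts non-negativity from the fact that the $x_i$ are strictly decreasing integers; you make the factor-by-factor sign check explicit, which is exactly the intended reasoning), your total-mass argument takes a genuinely different route. The paper expands each product $\prod_{j \neq i_0}\bigl(1 - \tfrac{q}{x_{i_0}-x_j}\bigr)$ into $2^{N-1}$ summands and shows that, for every subset $\{i_0,\dots,i_l\}$ with $l \geq 1$, the corresponding terms cancel when summed over the choice of distinguished index; that cancellation is the sum-of-residues-is-zero identity for $\prod_{n}(z-x_{i_n})^{-1}$, applied once per subset. You instead package the whole weight into the single rational function $R(z)=\prod_j \frac{z-x_j-q}{z-x_j}$, identify the residues as $-q\,w_i$, and read off $\sum_i w_i = N$ by matching the $1/z$ coefficient at infinity — one global partial-fraction expansion in place of many local cancellations. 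This is cleaner and shorter; note also that $R(1/z)$ is, up to normalization, the generating function $\sum_k m_k^{(q)}z^{k+1}=\tfrac1q-\tfrac1q\prod_i\frac{1-(x_i+q)z}{1-x_iz}$ established in Proposition \ref{Prop9}, so your computation is precisely the $z^1$-coefficient of that identity, whereas the paper's subset expansion is the same combinatorial device it reuses to prove Proposition \ref{Prop9}. One minor point worth recording: if $q$ equals some integer $x_i-x_j$ then the pole of $R$ at $x_i$ cancels, but then $w_i=0$ as well, so your expansion $R(z)-1=\sum_i(-qw_i)/(z-x_i)$ remains valid with a vanishing coefficient; and your closing claim that $[-1,1]$ is ``exactly'' the right range should be read as sufficiency only, since for particular signatures positivity may persist outside this interval — neither affects the proof.
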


\begin{proof}
  Since $\{\mathlambda_i (N) + N - i\}_{i = 1}^N$ are strictly decreasing
  integers, $m_{N, P \text{} P (q)} [\mathlambda (N)]$ is a positive measure
  for $q \in [- 1, 1]$. For notation simplicity, consider $x_i (N) =
  \mathlambda_i (N) - i$ for every $i = 1, \ldots, N$. Then for $i_0 \in \{1,
  \ldots, N\}$, the product that corresponds to $i_0$
  \[ \prod_{j \neq i_0} \frac{x_{i_0} (N) - x_j (N) - q}{x_{i_0} (N) - x_j
     (N)} = \prod_{j \neq i_0} \left( 1 + \frac{- q}{x_{i_0 (N)} - x_j (N)}
     \right), \]
  is equal to a big sum where its summands are either $1$ or
  \[ a_{\{i_1, \ldots, i_l \}} (i_0) \assign \frac{(- q)^l}{(x_{i_0} (N) -
     x_{i_1} (N)) \ldots (x_{i_0} (N) - x_{i_l} (N))} \text{\quad for some } l
     = 1, \ldots, N - 1, \]
  where $i_0, \ldots, i_l \in \{1, \ldots, N\}$ are distinct. But these summands will not contribute to the mass of the measure since
  \[ a_{\{i_1, \ldots, i_l \}} (i_0) + a_{\{i_0, i_2, \ldots, i_l \}} (i_1) +
     \cdots + a_{\{i_0, \ldots, i_{l - 1} \}} (i_l) = 0. \]
  This implies that the mass is equal to $1$ for every $q \in \mathbb{R}$.
\end{proof}

Although the $q$-deformed Perelomov-Popov measures do not seem very
``natural'' from the probabilistic point of view, there are some interesting
properties from the side of symmetric functions. These can be understood
trying to express the moments of $m_{N, P \text{} P (q)} [\mathlambda (N)]$ in
terms of power sums on the variables $x_i (N) = \mathlambda_i (N) + N - i$, $i
= 1, \ldots, N$.

\begin{proposition}
  \label{Prop9}Let $x_1, \ldots, x_N$ be $N$ distinct variables and for every
  $k \in \mathbb{N}$ consider the rational function
  \begin{equation}
    m_k^{(q)} (x_1, \ldots, x_N) \assign \sum_{i = 1}^N \left( \prod_{j \neq
    i} \frac{x_i - x_j - q}{x_i - x_j} \right) x_i^k . \label{fexi}
  \end{equation}
  Then $m_k^{(q)} (x_1, \ldots, x_n)$ is a symmetric polynomial that satisfies
  the relation
  \[ (k + 1) !m_k^{(q)} (x_1, \ldots, x_N) \]
  \begin{equation}
    \text{\qquad} = \frac{1}{q} \sum_{\pi \in P (k + 1)} (- 1)^{| \pi | - 1}
    \prod_{V \in \pi} (|V| - 1) ! (p_{|V|} (x_1 + q, \ldots, x_N + q) -
    p_{|V|} (x_1, \ldots, x_N)), \label{7}
  \end{equation}
  where $p_k (x_1, \ldots, x_N) \assign x_1^k + \cdots + x_N^k$ and $P (k)$
  denotes the set of partitions of $\{1, \ldots, k\}$.
\end{proposition}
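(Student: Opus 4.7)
The plan is to treat (\ref{fexi}) as the Laurent expansion at infinity of a single rational function, and to prove (\ref{7}) by comparing two expansions of $\log F$ via the exponential formula. Symmetry of $m_k^{(q)}$ is immediate from the definition. Polynomiality is not obvious, but the only candidate singularities are the simple hyperplanes $x_i = x_j$, and it suffices to verify that the residues cancel pairwise: the $i$th summand contributes, near $x_i = x_j$, a leading term $-q/(x_i - x_j)$ times a symmetric expression in the remaining variables evaluated at $x_j$, while the $j$th summand contributes exactly the negative of that same term. Hence $m_k^{(q)}$ extends to a polynomial, and by symmetry a symmetric polynomial.

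For the main identity, I would package the $m_k^{(q)}$ as coefficients of the rational function
\[ F(y) \;=\; \prod_{j=1}^N \frac{y - x_j - q}{y - x_j}. \]
Since $F(\infty)=1$ and the residue of $F$ at $y=x_j$ equals $-q \prod_{i\neq j}(x_j - x_i - q)/(x_j - x_i)$, partial fractions give
\[ \frac{1 - F(y)}{q} \;=\; \sum_{j=1}^N \frac{1}{y - x_j} \prod_{i \neq j} \frac{x_j - x_i - q}{x_j - x_i}. \]
Expanding each $1/(y-x_j) = \sum_{k\ge 0} x_j^k/y^{k+1}$ as a Laurent series at infinity and comparing with (\ref{fexi}) yields
\[ \frac{1 - F(y)}{q} \;=\; \sum_{k \geq 0} \frac{m_k^{(q)}(x_1,\ldots,x_N)}{y^{k+1}}. \]

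On the other hand, taking the logarithm and using $\log(y - a) = \log y - \sum_{k\ge 1} a^k/(k\,y^k)$ gives
\[ \log F(y) \;=\; -\sum_{k \geq 1} \frac{p_k(x_1+q,\ldots,x_N+q) - p_k(x_1,\ldots,x_N)}{k\, y^k}. \]
Writing $\Delta p_k := p_k(x+q) - p_k(x)$ and applying the exponential formula (with weights $g_k = -(k-1)!\,\Delta p_k$, so that $\sum_k g_k z^k/k! = \sum_k -\Delta p_k z^k/k$, where $z = 1/y$) expands $F(y) = \exp(\log F(y))$ as a sum over set partitions:
\[ F(y) \;=\; \sum_{n \geq 0} \frac{1}{n!\, y^n} \sum_{\pi \in P(n)} (-1)^{|\pi|} \prod_{V \in \pi} (|V|-1)!\, \Delta p_{|V|}. \]
Subtracting $1$, dividing by $q$, and matching the coefficient of $y^{-(k+1)}$ with the previous expansion yields (\ref{7}). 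Everything takes place at the level of formal Laurent series in $1/y$, so no convergence issues arise. The main obstacle is bookkeeping: verifying the cancellation of residues for the polynomiality claim, and tracking the signs and factorials in the exponential-formula step so that the factor $1/q$ in (\ref{7}) comes out correctly.
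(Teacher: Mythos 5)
Your proof is correct, and it reaches the paper's central identity by a genuinely different and more elementary route. Both arguments ultimately rest on the generating-function identity
\[ \sum_{k \geq 0} m_k^{(q)} (x_1, \ldots, x_N) \, z^{k + 1} \;=\; \frac{1}{q} - \frac{1}{q} \prod_{i = 1}^N \frac{1 - (x_i + q) z}{1 - x_i z}, \]
followed by writing the product as the exponential of differences of power sums and extracting coefficients; you carry out this last step explicitly with the set-partition form of the exponential formula, while the paper phrases it as ``differentiating $k+1$ times at $z = 0$,'' which is the same computation via Fa\`{a} di Bruno, and your bookkeeping of the signs $(-1)^{|\pi|-1}$ and the factors $(|V|-1)!$ reproduces (\ref{7}) exactly. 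The real divergence is in how the displayed identity is obtained. The paper expands $\prod_{j \neq i} (1 - q/(x_i - x_j))$ over subsets, recognizes the resulting sums as complete homogeneous symmetric polynomials, resums them as $\prod_j (1 - x_{i(j)} z)^{-1}$, identifies the sum over distinct indices with $(\partial_{x_1} + \cdots + \partial_{x_N})^l$ applied to $\prod_i (1 - x_i z)$, and finishes with Taylor's theorem in the shift variable. You instead note that after the substitution $z = 1/y$ the right-hand side is $q^{-1}(1 - F(y))$ with $F(y) = \prod_j (y - x_j - q)/(y - x_j)$, and that the partial-fraction decomposition of $F$ --- whose residue at $y = x_j$ is $-q \prod_{i \neq j} (x_j - x_i - q)/(x_j - x_i)$ --- produces precisely the summands of (\ref{fexi}) upon expanding $1/(y - x_j)$ at infinity. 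This is a shorter, cleaner derivation of the key step, and it also delivers polynomiality and symmetry directly from the generating function (your separate residue-cancellation check across the hyperplanes $x_i = x_j$ is a valid alternative; the paper gets the same for free from its first display). Neither argument addresses $q = 0$, where the $1/q$ must be read as a limit, but that is a shared, harmless omission.
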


\begin{proof}
  By relation (\ref{fexi}) we have that
  \[ m_k^{(q)} (x_1, \ldots, x_N) = \sum_{l = 1}^N \frac{(- q)^{l - 1}}{l!}
     \sum_{\underset{i (\alpha) \neq i (\beta) \text{ for } \alpha \neq
     \beta}{i (1), \ldots, i (l) = 1}}^N h_{k - l + 1} (x_{i (1)}, \ldots,
     x_{i (l)}), \]
  where $h_k$ denotes the complete homogeneous symmetric polynomial of degree
  $k$. This implies that
  \begin{eqnarray*}
    \sum_{k = 0}^{\infty} m_k^{(q)} (x_1, \ldots, x_N) z^{k + 1} & = & \sum_{l
    = 1}^N \frac{(- q)^{l - 1} z^l}{l!} \sum_{\underset{i (\alpha) \neq i
    (\beta) \text{ for } \alpha \neq \beta}{i (1), \ldots, i (l) = 1}}^N
    \sum_{k = l - 1}^{\infty} h_{k - l + 1} (x_{i (1)}, \ldots, x_{i (l)})
    z^{k - l + 1}\\
    & = & \sum_{l = 1}^N \frac{(- q)^{l - 1} z^l}{l!} \sum_{\underset{i
    (\alpha) \neq i (\beta) \text{ for } \alpha \neq \beta}{i (1), \ldots, i
    (l) = 1}}^N \prod_{j = 1}^l \frac{1}{1 - x_{i (j)} z}\\
    & = & - \sum_{l = 1}^N \frac{q^{l - 1} z^l}{l!}  \frac{(\partial_{x_1} +
    \cdots + \partial_{x_N})^l \prod_{i = 1}^N (1 - x_i z)}{\prod_{i = 1}^N (1
    - x_i z)}\\
    & = & - \left( \prod_{i = 1}^N \frac{1}{1 - x_i z} \right)  \sum_{l =
    1}^N \frac{q^{l - 1}}{l!}  \left. \frac{\mathd^l}{\mathd t^l} f (z, x_1 +
    t, \ldots, x_N + t) \right|_{t = 0},
  \end{eqnarray*}
  where $f (z, x_1, \ldots, x_N) \assign \prod_{i = 1}^N (1 - x_i z)$. Thus $f
  (z, x_1 + t, \ldots, x_N + t)$ is a polynomial of $t$ of degree $N$ and by
  Taylor's theorem we have that
  \begin{eqnarray*}
    \sum_{k = 0}^{\infty} m_k^{(q)} (x_1, \ldots, x_N) z^{k + 1} & = &
    \frac{1}{q} - \frac{1}{q}  \prod_{i = 1}^N \frac{1 - (x_i + q) z}{1 - x_i
    z}\\
    & = & \frac{1}{q} - \frac{1}{q} \exp \left( \sum_{k = 1}^{\infty}
    \frac{p_k (x_1, \ldots, x_N) - p_k (x_1 + q, \ldots, x_N + q)}{k} z^k
    \right) .
  \end{eqnarray*}
  Differentiating the above relation $k + 1$ times and setting $z = 0$, we get
  relation (\ref{7}).
\end{proof}

\begin{remark}
  It seems that the polynomials $m_k^{(q)} (x_1, \ldots, x_N)$ are very
  natural functions from the side of symmetric polynomials. In the theory of
  symmetric functions (see e.g. {\cite{B34}}) the super-symmetric complete
  homogeneous polynomials $h_k (x_1, \ldots, x_N ; y_1 {, \ldots, y_N} )$, $k
  \in \mathbb{N}$, are symmetric in $(x_1, \ldots, x_N)$, $(y_1, \ldots, y_N)$
  separately and such that the super-symmetric power sums
  \[ p_k (x_1, \ldots, x_N ; y_1, \ldots, y_N) = \sum_{i = 1}^N x_i^k -
     \sum_{i = 1}^N (- y_i)^k, \]
  can be expressed in terms of these in the usual way
  \[ \sum_{k = 0}^{\infty} h_k z^k = \exp \left( \sum_{k = 1}^{\infty}
     \frac{p_k}{k} z^k \right) . \]
  The formula that we proved in the above proposition for the generating
  function of the symmetric polynomials
  \[ h_k^{(+ q)} (x_1, \ldots, x_N) \assign \left\{\begin{array}{l}
       - qm_{k - 1}^{(q)} (x_1, \ldots, x_N) \text{\quad for } k \geq 1\\
       1 \text{\quad for } k = 0,
     \end{array}\right. \]
  implies that $h_k^{(+ q)} (x_1, \ldots, x_N)$ is equal to the
  super-symmetric complete homogeneous polynomial of order $k$, in two sets of
  variables $(x_1, \ldots, x_N), (y_1, \ldots, y_N)$, if we consider $y_i = -
  (x_i + q)$ for every $i = 1, \ldots, N$.
\end{remark}

There is a straightforward procedure in the literature that allows the passage
from Schur generating functions to $q$-deformed Perelomov-Popov measures. This
can be done by applying certain differential operators on the Schur generating
functions. Such operators were firstly introduced in {\cite{B1}} for the cases
$q = 0, 1$ where they realized that the $k$-th moments of the corresponding
$q$-deformed Perelomov-Popov are eigenvalues of the character of the
irreducible representation of $U (N)$, of the corresponding signature.

We now introduce a differential operator that depends on $q \in [- 1, 1]$ and
acts on functions $f$ of $N$ variables that are analytic:
\[ \mathcal{D}_k^{U (N), q} (f) (u_1, \ldots, u_N) \assign \left( \prod_{i <
   j} \frac{1}{u_i - u_j} \right)  \sum_{i = 1}^N u_i^{1 - q} \partial_i
   \left[ (u_i \partial_i)^{k - 1} \left( \prod_{i < j} (u_i - u_j) f (u_1,
   \ldots, u_N) \right) \right] . \]
To make our notation clear, $\partial_i$ denotes the partial derivative with
respect to the variable $u_i$ and $(u_i \partial_i) f (u_1, \ldots, u_N)
\assign u_i \partial_i f (u_1, \ldots, u_N)$ for every $i = 1, \ldots, N$.

\begin{proposition}
  \label{p9}For every $\mathlambda (N) = (\mathlambda_1 (N) \geq \cdots \geq
  \mathlambda_N (N)) \in \hat{U} (N)$ and $k \in \mathbb{N}$ we have
  \begin{equation}
    \frac{1}{\chi^{\mathlambda (N)} (1^N)} \mathcal{D}_k^{U (N), q}
    \chi^{\mathlambda (N)} (u_1, \ldots, u_N) \longdownminus_{u_1 = \cdots =
    u_N = 1} = N^{k + 1} \int_{\mathbb{R}} t^k m_{N, P \text{} P (q)}
    [\mathlambda (N)] (d \text{} t) . \label{exiswsi6}
  \end{equation}
\end{proposition}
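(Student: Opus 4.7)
The plan is to route the computation through the Weyl character formula $V(u) \chi^{\mathlambda(N)}(u) = A(u) := \det(u_i^{\ell_j})_{1 \leq i, j \leq N}$ with $\ell_j := \mathlambda_j(N) + N - j$ and $V(u) := \prod_{i<j}(u_i - u_j)$, so that $\mathcal{D}_k^{U(N), q}(\chi^{\mathlambda(N)})(u) = L(A)(u)/V(u)$ where $L := \sum_{i=1}^N u_i^{1-q}\partial_i(u_i\partial_i)^{k-1}$. Unpacking the right-hand side of (\ref{exiswsi6}) and using $(\mathlambda_i - i) - (\mathlambda_j - j) = \ell_i - \ell_j$ together with the Weyl dimension formula $\chi^{\mathlambda(N)}(1^N) = \prod_{i<j}(\ell_i - \ell_j)/(j-i)$ reduces (\ref{exiswsi6}) to showing
\[
\frac{L(A)(1^N)}{V(1^N)} \;=\; \chi^{\mathlambda(N)}(1^N) \cdot \sum_{s=1}^N \ell_s^k \prod_{n \neq s}\frac{\ell_s - \ell_n - q}{\ell_s - \ell_n}.
\]

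The central computation is that on a monomial the scalar operator $u^{1-q}\partial_u(u\partial_u)^{k-1}$ sends $u^m \mapsto m^k u^{m-q}$. Applied entry-by-entry to row $i$ of the determinant $A$, the operator $u_i^{1-q}\partial_i(u_i\partial_i)^{k-1}$ therefore replaces $u_i^{\ell_s}$ by $\ell_s^k u_i^{\ell_s - q}$ and leaves all other rows alone. Expanding each such modified determinant along its altered row $i$, swapping the order of the resulting double sum over $(i, s)$, and reassembling the inner sum as cofactor expansion along the $s$-th column of a column-modified matrix, I expect to arrive at
\[
L(A)(u) \;=\; \sum_{s=1}^N \ell_s^k \det\bigl(u_i^{\tilde\ell^{(s)}_r}\bigr)_{1 \leq i, r \leq N},
\]
where $\tilde\ell^{(s)}_r = \ell_r$ for $r \neq s$ and $\tilde\ell^{(s)}_s = \ell_s - q$.

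Next I invoke the real-exponent version of the Weyl dimension formula,
\[
\frac{\det(u_i^{a_j})_{i,j}}{V(u)}\bigg|_{u = 1^N} \;=\; \prod_{1 \leq m < n \leq N}\frac{a_m - a_n}{n - m},
\]
valid for any tuple of distinct reals $(a_j)$. Applied with $a = \tilde\ell^{(s)}$ and divided by $\chi^{\mathlambda(N)}(1^N) = \prod_{m<n}(\ell_m - \ell_n)/(n-m)$, every factor with $m, n \neq s$ cancels, and the surviving $2(N-1)$ factors (those with $m=s$ or $n=s$) collapse, after absorbing the minus signs from the case $m<s$, into $\prod_{n \neq s}(\ell_s - \ell_n - q)/(\ell_s - \ell_n)$. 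Summing over $s$ produces exactly the displayed target identity and hence (\ref{exiswsi6}).

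The main obstacle I anticipate is justifying the real-exponent Weyl dimension formula, since $\ell_s - q$ need not be an integer for $q \in (-1,1) \setminus \{0\}$. I would handle this by setting $u_i = 1 + \epsilon y_i$ with the $y_i$ distinct, expanding $(1+\epsilon y_i)^{a_j}$ as a power series in $\epsilon$, and reading off the leading $\epsilon^{\binom{N}{2}}$ coefficient; this reduces the claim to the classical binomial determinant identity $\det\bigl(\binom{a_j}{i-1}\bigr)_{i,j} = (-1)^{\binom{N}{2}}\prod_{m<n}(a_m - a_n)/\prod_{i=0}^{N-1} i!$, which follows by an antisymmetry-plus-degree-count argument together with a leading-coefficient match. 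Everything else in the argument is routine determinantal shuffling.
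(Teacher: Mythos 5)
Your argument is correct, and its skeleton is the same as the paper's: clear denominators via the Weyl character formula, observe that $u^{1-q}\partial_u(u\partial_u)^{k-1}$ acts on monomials by $u^m\mapsto m^k u^{m-q}$, expand the determinant by multilinearity to get $\sum_s \ell_s^k$ times the determinant with exponent $\ell_s$ shifted to $\ell_s-q$ (the paper phrases this as $\sum_i(\mathlambda_i(N)+N-i)^k\chi^{\mathlambda_i^{(-q)}(N)}$ with a non-integral ``signature''), and then evaluate each shifted Vandermonde-type ratio at $1^N$ by the real-exponent dimension formula $\prod_{m<n}(a_m-a_n)/(n-m)$, whose telescoping against $\chi^{\mathlambda(N)}(1^N)$ produces exactly the Perelomov--Popov weights. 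The single point of divergence is how that evaluation at $1^N$ is justified for non-integer exponents: the paper passes to the multivariate Bessel function and invokes the Harish-Chandra/Itzykson--Zuber integral formula, whereas you extract the leading $\epsilon^{\binom{N}{2}}$ coefficient after substituting $u_i=1+\epsilon y_i$ and reduce to the binomial determinant $\det\bigl(\binom{a_j}{i-1}\bigr)_{i,j}$. Your route is self-contained and purely combinatorial (and the analyticity of the ratio near the diagonal, needed to identify the limit along $u_i=1+\epsilon y_i$ with the value at $1^N$, is immediate since the numerator vanishes on each hyperplane $u_i=u_j$); the paper's is shorter given the cited machinery. Both are sound.
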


\begin{proof}
  Using the Leibniz formula for the determinant, we immediately get that
  \[ \mathcal{D}_k^{U (N), q} \chi^{\mathlambda (N)} (u_1, \ldots, u_N) =
     \sum_{i = 1}^N (\mathlambda_i (N) + N - i)^k \chi^{\mathlambda_i^{(- q)}
     (N)} (u_1, \ldots, u_N), \]
  where $\mathlambda_i^{(- q)} (N) \assign (\mathlambda_1 (N), \ldots,
  \mathlambda_{i - 1} (N), \mathlambda_i (N) - q, \mathlambda_{i + 1} (N),
  \ldots, \mathlambda_N (N))$. We will denote the $j$-th coordinate of
  $\mathlambda_i^{(- q)} (N)$ by $\mathlambda^{(- q)}_{i, j} (N)$. Of course
  $\mathlambda_i^{(- q)} (N)$ is not a signature, but an $N$-tuple of reals
  which are decreasing for $q \in (- 1, 1)$ but not necessarily for $q = \pm
  1$. For the signature $\mathlambda (N)$, the Weyl's dimension formula
  implies that
  \[ \chi^{\mathlambda (N)} (1^N) = \prod_{1 \leq \alpha < \beta \leq N}
     \frac{(\mathlambda_{\alpha} (N) - \alpha) - (\mathlambda_{\beta} (N) -
     \beta)}{\beta - \alpha} . \]
  The value of $\chi^{\mathlambda_i^{(- q)} (N)}$ at $1^N$ can be computed
  evaluating the multivariate Bessel function
  \[ B_{\mathlambda_i^{(- q)} (N)} (u_1, \ldots, u_N) \assign \frac{\det
     (e^{u_{\alpha}  (\mathlambda_{i, \beta}^{(- q)} (N) + N - \beta)})_{1
     \leq \alpha, \beta \leq N}}{\prod_{1 \leq \alpha < \beta \leq N}
     (u_{\alpha} - u_{\beta})}, \]
  at $0^N$. The Harish-Chandra integral formula (see in {\cite{B24}}) implies
  that this value is equal to
  \[ \prod_{1 \leq \alpha < \beta \leq N} \frac{(\mathlambda_{i, \alpha}^{(-
     q)} (N) - \alpha) - (\mathlambda_{i, \beta}^{(- q)} (N) - \beta)}{\beta -
     \alpha} . \]
  This proves the claim.
\end{proof}

\subsection{Asymptotic behavior and moments}

Now, we focus on the asymptotic study of the random measure $m_{N, P \text{} P
(q)} [\mathlambda (N)]$ where $\mathlambda (N) \in \hat{U} (N), N \in
\mathbb{N}$ are random signatures such that the Schur generating functions that
corresponds to the distribution of $\mathlambda (N)$, satisfy certain
asymptotics as $N \rightarrow \infty$. Our goal is to extract a Law of Large
numbers for $m_{N, P \text{} P (q)} [\mathlambda (N)]$.

In Proposition \ref{p9} we described the procedure that gives us the moments
of the $q$-deformed Perelomov-Popov measures. The control of these moments as
$N \rightarrow \infty$ relies on the way that the differential operator
$\mathcal{D}_k^{U (N), q}$ acts on analytic functions of $N$ variables. This
action can be described explicitly when our analytic function is symmetric. Of
course this is a property that the Schur generating function satisfies.

\begin{theorem}
  \label{12}Let $\varrho (N), N \in \mathbb{N}$, be a sequence of probability
  measures on $\hat{U} (N)$. Moreover we assume that there exists a function
  $\Psi$, analytic in a neighborhood of $1$ such that for every $r \in
  \mathbb{N}$ fixed,
  \begin{equation}
    \lim_{N \rightarrow \infty} \frac{1}{N} \log \text{} S_{\varrho (N)} (u_1,
    \ldots, u_r, 1^{N - r}) = \sum_{i = 1}^r \Psi (u_i), \label{assumption}
  \end{equation}
  where the convergence is uniform in a neighborhood of $1^r$. Then the random
  measure $m_{N, P \text{} P (q)} [\varrho (N)]$ converges as $N \rightarrow
  \infty$ in probability, in the sense of moments to a deterministic measure
  $\tmmathbf{\mu}^{(q)}$ on $\mathbb{R}$, with moments
  $(\tmmathbf{\mu}_k^{(q)})_{k \in \mathbb{N}}$ given by
  \begin{equation}
    \tmmathbf{\mu}_k^{(q)} = \sum_{m = 0}^k \binom{k}{m} \frac{1}{(m + 1) !} 
    \left. \frac{\mathd^m}{\mathd u^m} (u^{k - q} (\Psi' (u))^{k - m})
    \right|_{u = 1} . \label{moments}
  \end{equation}
\end{theorem}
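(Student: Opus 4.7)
The plan is to reduce the expected $k$-th moment to an explicit differential operator acting on $S_{\varrho(N)}$ via Proposition \ref{p9}, analyze that action asymptotically using (\ref{assumption}), and then bootstrap to convergence in probability via a variance estimate. Averaging the identity (\ref{exiswsi6}) over $\mathlambda(N)\sim\varrho(N)$ and using linearity of $\mathcal{D}_k^{U(N),q}$ together with $S_{\varrho(N)}(1^N)=1$, I get
\[
  \mathbb{E}_{\mathlambda(N)}\!\left[\int_{\mathbb{R}} t^k\, m_{N, P \text{} P(q)}[\mathlambda(N)](d\text{}t)\right]
  \;=\; \frac{1}{N^{k+1}}\,\mathcal{D}_k^{U(N),q}S_{\varrho(N)}(1^N),
\]
so the goal becomes to show the right-hand side converges to the formula (\ref{moments}).

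I first simplify the operator via $u^{1-q}\partial(u\partial)^{k-1}=u^{-q}(u\partial)^k$, giving
\[
  \mathcal{D}_k^{U(N),q}(f)(u) \;=\; \frac{1}{V(u)}\sum_{i=1}^N u_i^{-q}(u_i\partial_i)^k\bigl(V(u)f(u)\bigr),\qquad V(u)=\prod_{i<j}(u_i-u_j).
\]
Writing $f=S_{\varrho(N)}=e^{F_N}$ and applying Leibniz, $(u_i\partial_i)^k(Vf)=\sum_{m=0}^k\binom{k}{m}\bigl((u_i\partial_i)^m V\bigr)\bigl((u_i\partial_i)^{k-m}f\bigr)$. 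The ratio $(u_i\partial_i)^{k-m}f/f$ is a complete Bell polynomial in the iterated one-variable derivatives $(u_i\partial_i)^j F_N$ at $u_i$. Hypothesis (\ref{assumption}) forces mixed partials $\partial_{i_1}\cdots\partial_{i_r}F_N(1^N)$ with $|\{i_1,\dots,i_r\}|\geq 2$ to be $o(N)$ while single-variable ones satisfy $(u_i\partial_i)^j F_N(1^N)=N(u\partial)^j\Psi(u)|_{u=1}+o(N)$. Hence to leading order only the singleton-only partition in the Bell polynomial survives and gives $(u_i\partial_i)^{k-m}f/f\approx N^{k-m}(u_i\Psi'(u_i))^{k-m}$ for $u_i$ near $1$.

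What remains is, for each $m$, the evaluation of the apparently singular sum
\[
  \frac{1}{V(u)}\sum_{i=1}^N u_i^{-q}\bigl((u_i\partial_i)^m V(u)\bigr)\cdot\bigl(u_i\Psi'(u_i)\bigr)^{k-m}\Big|_{u=1^N}.
\]
The antisymmetric $V$ produced by differentiation and the $1/V$ in front cancel combinatorially after summing over $i$: at the $m=1$ level this is the pairing $\sum_{i\neq\ell}\tfrac{g(u_i)}{u_i-u_\ell}=\sum_{\{i,\ell\}}\tfrac{g(u_i)-g(u_\ell)}{u_i-u_\ell}\to \binom{N}{2}g'(1)$, and for general $m$ one iterates, or alternatively packages everything as a multivariate Cauchy residue around $u=1^N$, to see that $m+1$ indices collide at $u=1$ and contribute $\binom{N}{m+1}\sim N^{m+1}/(m+1)!$ together with an $m$-th Taylor derivative at $u=1$ of the one-variable weight. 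Combining $u^{-q}$ from the operator, $u^{k-m}$ from $(u\Psi'(u))^{k-m}$, and $u^m$ arising from the action of $(u\partial)^m$ on $V$ gives the monomial $u^{k-q}$. Multiplying the surviving $N^{m+1}$ by the $N^{k-m}$ from the Bell polynomial yields exactly $N^{k+1}$, and gathering the Leibniz factor $\binom{k}{m}$ produces the $m$-th summand of (\ref{moments}); summing over $m$ proves $L^1$ convergence of the $k$-th moments.

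To upgrade to convergence in probability in the sense of moments I repeat the same analysis on $\mathcal{D}_{k_1}^{U(N),q}\mathcal{D}_{k_2}^{U(N),q}S_{\varrho(N)}(1^N)$, which computes $\mathbb{E}[(\int t^{k_1}m)(\int t^{k_2}m)]$: under (\ref{assumption}) the two operators decouple to leading order, so the covariance is $o(1)$ and the standard moment method finishes. The hard step is the residue/symmetrization computation in the third paragraph: simultaneously extracting the combinatorial factor $\binom{k}{m}/(m+1)!$ \emph{and} the specific Taylor derivative $\frac{\mathd^m}{\mathd u^m}(u^{k-q}(\Psi'(u))^{k-m})|_{u=1}$ from the singular sum requires carefully tracking how the antisymmetric Vandermonde interacts with the symmetric weight $f$, and this is the mechanism by which the specific form of (\ref{moments}) emerges.
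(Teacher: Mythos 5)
Your computation of the expected moments follows the same route as the paper: apply $\mathcal{D}_k^{U(N),q}$ to $S_{\varrho(N)}$, write $S_{\varrho(N)}=\exp\left(N\cdot\tfrac1N\log S_{\varrho(N)}\right)$, observe that under (\ref{assumption}) only the terms built from single-variable derivatives of $\tfrac1N\log S_{\varrho(N)}$ survive at order $N^{k+1}$, and extract the factor $\binom{k}{m}\tfrac{1}{(m+1)!}$ together with the $m$-th Taylor derivative of $u^{k-q}(\Psi'(u))^{k-m}$ from the symmetrized singular sums. The paper outsources the key combinatorial identity (its display (\ref{9})) to Lemma 1 of \cite{B41}, whereas you sketch a re-derivation via the Bell-polynomial expansion and a residue/telescoping argument; your sketch is plausible and consistent with the target formula, and flagging it as the hard step is fair, since that identity is exactly where the work is.

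The genuine gap is in your upgrade to convergence in probability. You claim that $\mathcal{D}_{k_1}^{U(N),q}\mathcal{D}_{k_2}^{U(N),q}S_{\varrho(N)}(1^N)$ computes $\mathbb{E}\left[\left(\int t^{k_1}m\right)\left(\int t^{k_2}m\right)\right]$ up to normalization. This is true only for $q=0$, where $\chi^{\lambda(N)}$ is an eigenfunction of the operator. For $q\neq 0$, Proposition \ref{p9} gives
\[
\mathcal{D}_{k_2}^{U(N),q}\chi^{\mathlambda(N)}=\sum_{i=1}^N(\mathlambda_i(N)+N-i)^{k_2}\,\chi^{\mathlambda_i^{(-q)}(N)},
\]
a combination of \emph{shifted} characters rather than a multiple of $\chi^{\mathlambda(N)}$. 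Applying $\mathcal{D}_{k_1}^{U(N),q}$ to this and evaluating at $1^N$ produces doubly-shifted Perelomov--Popov-type weights, not the product of the two random moments, so the iterated operator does not give you access to the covariance you need, and the variance bound is not established. The paper avoids this entirely: by Proposition \ref{Prop9} the $k$-th moment of $m_{N,P\text{}P(q)}[\mathlambda(N)]$ is an explicit (deterministic, $N$-dependent) polynomial in the power sums $p_j(\mathlambda_1(N)+N-1,\ldots,\mathlambda_N(N))$, i.e.\ in the moments of $m_{N,P\text{}P(0)}[\mathlambda(N)]$; hence convergence in probability for $q=0$ (proved in \cite{B1} by exactly your iterated-operator argument, which is valid there because of the eigenfunction property) transfers to all $q$. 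You should either adopt that reduction or construct a different operator whose evaluation at $1^N$ genuinely computes products of the $q$-deformed moments.
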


\begin{proof}
  First, we show convergence in expectation, i.e. for every $k \in \mathbb{N}$
  \begin{equation}
    \lim_{N \rightarrow \infty} \mathbb{E}_{\mathlambda (N)} \left[
    \int_{\mathbb{R}} t^k m_{N, P \text{} P (q)} [\mathlambda (N)] \left( d
    \text{} t \right) \right] =\tmmathbf{\mu}_k^{(q)} . \label{expectation}
 \end{equation}
  For this purpose we will show that there is an expansion of the form $N^{k +
  1} M^{(q)}_{0, k, N} (\Psi) + N^k M^{(q)}_{1, k, N} (\Psi) + \cdots +
  M^{(q)}_{k + 1, k, N} (\Psi)$ for $\mathcal{D}_k^{U (N), q} S_{\varrho (N)}
  \longdownminus_{u_1 = \cdots = u_N = 1}$, where $(M_{i, k, N}
  (\Psi))_{N \in \mathbb{N}}$ converge and $\lim_{N \rightarrow \infty}
  M^{(q)}_{0, k, N} (\Psi) =\tmmathbf{\mu}_k^{(q)}$. For the cases $q = 0, 1$,
  it was studied extensively in {\cite{B1}}, {\cite{B41}}, how the assumption
  (\ref{assumption}) leads to such an expansion and which are the limits of
  the corresponding sequences. For the general case $q \in [- 1, 1]$, similar
  arguments hold. Below we briefly describe this procedure and how the general
  $q \in [- 1, 1]$ affects the limit of the coefficient of $N^{k + 1}$. For
  more details we refer to the above papers.
  
  By definition, the function $\mathcal{D}_k^{U (N), q} S_{\varrho (N)} (u_1,
  \ldots, u_N)$ can be written as a linear combination of terms
  \begin{equation}
    \left( \prod_{1 \leq i < j \leq N} \frac{1}{u_i - u_j} \right)  \sum_{i =
    1}^N u_i^{n - q} \partial_i^n \left( \prod_{1 \leq i < j \leq N} (u_i -
    u_j) S_{\varrho (N)} (u_1, \ldots, u_N) \right), \label{8}
  \end{equation}
  for $n = 1, \ldots, k$. The leading term of the expansion $N^{k + 1} M_{0,
  k, N}^{(q)} (\Psi)$ emerges from the term (\ref{8}) for $n = k$, which has
  coefficient $1$. From Lemma 1 of {\cite{B41}}, it emerges that this is equal
  to
  \begin{equation}
    \sum_{m = 0}^k \sum_{\underset{l_i \neq l_j \text{ for } i \neq j}{l_0,
    \ldots, l_m = 1}}^N \binom{k}{m}  \frac{u_{l_0}^{k - q} \partial_{l_0}^{k
    - m} S_{\varrho (N)} (u_1, \ldots, u_N)}{(u_{l_0} - u_{l_1}) \ldots
    (u_{l_0} - u_{l_m})} . \label{9}
  \end{equation}
  Assumption (\ref{assumption}) indicates to write $S_{\varrho (N)} = \exp
  \left( N \cdot \frac{1}{N} \log \text{} S_{\varrho (N)} \right)$ before we
  differentiate, since we want to study (\ref{9}) as $u_1 = \cdots = u_N = 1$
  and $N \rightarrow \infty$. Doing this we have that (\ref{9}) is a linear
  combination of terms
  \[ \frac{u_{b_0}^{k - q}  \left( \partial_{b_0} \left( \frac{1}{N} \log
     \text{} S_{\varrho (N)} \right) \right)^{l_1} \ldots \left( \partial^{k -
     m}_{b_0} \left( \frac{1}{N} \log \text{} S_{\varrho (N)} \right)
     \right)^{l_{k - m}}}{(u_{b_0} - u_{b_1}) \ldots (u_{b_0} - u_{b_m})} +
     \text{ {\hspace{11em}}} \]
  \begin{equation}
    \text{{\hspace{8em}}} \quad \ldots + \frac{u_{b_m}^{k - q} \left(
    \partial_{b_m} \left( \frac{1}{N} \log \text{} S_{\varrho (N)} \right)
    \right)^{l_1} \ldots \left( \partial_{b_m}^{k - m} \left( \frac{1}{N} \log
    \text{} S_{\varrho (N)} \right) \right)^{l_{k - m}}}{(u_{b_m} - u_{b_0})
    \ldots (u_{b_m} - u_{b_{m - 1}})}, \label{esta}
  \end{equation}
  where $l_1 + 2 l_2 + \cdots + (k - m) l_{k - m} = k - m$ and $b_0, \ldots,
  b_m \in \{1, \ldots, N\}$ are distinct. For $u_1 = \cdots = u_N = 1$, we have
  that (\ref{esta}) converges (see e.g. Theorem 8 of {\cite{B41}}), the limit
  does not depend on $b_0, \ldots, b_m$ and due to (\ref{assumption}) the
  limit will converge as $N \rightarrow \infty$. For these reasons we can get
  the desirable expansion for $\mathcal{D}_k^{U (N), q} S_{\varrho (N)}
  \longdownminus_{u_1 = \cdots = u_N = 1}$. The leading term emerges from the
  terms (\ref{esta}) for $l_1 = k - m, l_2 = \cdots = l_{k - m} = 0$ and it
  is equal to
  \[ M_{0, k, N}^{(q)} (\Psi) = \sum_{m = 0}^k \binom{k}{m} \frac{1}{(m + 1)
     !} \partial_1^m \left( u_1^{k - q} \left( \partial_1 \left( \frac{1}{N}
     \log \text{} S_{\varrho (N)} \right) \right)^{k - m} \right) (1^N) +
     \omicron (1) . \]
  Thus the relation (\ref{expectation}) holds.
  
  In order to show convergence in probability note that by Proposition
  \ref{Prop9} all the moments of $m_{N, P \text{} P (q)} [\varrho (N)]$
  converge in probability if all the moments of $m_{N, P \text{} P (0)}
  [\varrho (N)]$ converge in probability as well. The convergence in
  probability for the moments of $m_{N, P \text{} P (0)} [\varrho (N)]$ was
  proved in {\cite{B1}} where basically they proved the more general result
  \begin{equation}
    \lim_{N \rightarrow \infty} \mathbb{E}_{\mathlambda (N)} \left[ \left(
    \frac{1}{N} \sum_{i = 1}^N \left( \frac{\mathlambda_i (N) + N - i}{N}
    \right)^k \right)^l \right] = (\tmmathbf{\mu}_k^{(0)})^l \text{\quad for
    every } k, l \in \mathbb{N}, \label{VAp}
  \end{equation}
  applying recursively the differential operator $\mathcal{D}_k^{U (N), 0}$ on
  $S_{\varrho (N)}$. Therefore (\ref{VAp}) implies that the limit in
  probability of the $k$-th moment of $m_{N, P \text{} P (q)} [\varrho (N)]$
  is equal to the limit in expectation.
\end{proof}

\subsection{Asymptotic behavior and $R$-transform}

In Theorem \ref{12} we showed how from a certain asymptotic behavior for the
logarithm of the Schur generating functions of $(\varrho (N))_{N \in
\mathbb{N}}$, we can extract a Law of Large Numbers for $m_{N, P \text{} P
(q)} [\varrho (N)]$. It is clear that in this case, the moments of the
limiting measure are uniquely determined by the function $\Psi$ of
(\ref{assumption}), however from the formula (\ref{moments}) for the moments,
it is difficult to recognize in which way $\Psi$ characterizes the limiting
measure. For the cases $q = 0, 1$ this problem have been solved in
{\cite{B1}}. In these cases the function $\Psi$ is essential in order to
describe the $R$-transforms of the corresponding limiting measures.

Let $R^{(0)}$ and $R^{(1)}$ be the $R$-transforms of the limiting measures
$\tmmathbf{\mu}^{(0)}, \tmmathbf{\mu}^{(1)}$ of Theorem \ref{12}. Then it is
known (see in {\cite{B1}}) that these functions are given by
\begin{equation}
  R^{(0)} (z) = e^z \Psi' (e^z) + \frac{e^z}{e^z - 1} - \frac{1}{z}
  \text{\quad and\quad} R^{(1)} (z) = \frac{1}{1 - z} \Psi' \left( \frac{1}{1
  - z} \right), \label{16}
\end{equation}
for $|z|$ small enough. These relations clarify completely the kind of
dependence of the limiting measures from the function $\Psi$. Also, the
formula for $R^{(0)}$ motivates the definition of the quantized free
convolution. Our goal is to investigate how for general $q \in [- 1, 1]$, the
$R$-transform $R^{(q)}$ of $\tmmathbf{\mu}^{(q)}$ can be expressed in terms of
$\Psi$.

Looking at the formula for the moments $\tmmathbf{\mu}_k^{(q)}$, $q \in [- 1,
1]$, and using the moment-cumulant relations (\ref{momentcumulantrelations})
in order to compute the free cumulants $\kappa_n (\tmmathbf{\mu}^{(q)})$, for
$n$ small, it seems that for the family of $q$-deformed Perelomov-Popov
measures we have the phenomenon that $\kappa_n (\tmmathbf{\mu}^{(q)})$ is a
sum of two terms where the first summand is linear on $\Psi$ and the second
summand does not depend on $\Psi$. For example
\[ \kappa_1 (\tmmathbf{\mu}^{(q)}) = \Psi' (1) + \frac{1 - q}{2} \text{\quad
   and\quad} \kappa_2 (\tmmathbf{\mu}^{(q)}) = \Psi'' (1) + \Psi' (1) +
   \frac{1 - q^2}{12} . \]
Consequently, the same thing holds for $R^{(q)}$. The formulas in (\ref{16})
for $q = 0, 1$ makes us to expect that the summand of $R^{(q)} (z)$ that
depends on $\Psi$ will have the form $e_q (z) \Psi' (e_q (z))$ where $e_q (z)
= \sum_{n \geq 0} \frac{1}{n^{(q)}} z^n$ in a neighborhood of $0$ and
$(n^{(q)})_{n \in \mathbb{N}}$ is a sequence of reals. Of course $n^{(0)} =
n!$, $n^{(1)} = 1$ for every $n \in \mathbb{N}$ and for $q \in [- 1, 1]$
taking into account the formula (\ref{moments}) for $\tmmathbf{\mu}_k^{(q)}$,
one should expect that
\[ n^{(q)} = \frac{n!}{((n - 1) q + 1) ((n - 2) q + 1) \ldots (q + 1)} \text{
   for every } n \in \mathbb{N} \text{ and } e_q (z) = (1 - q \text{} z)^{-
   \frac{1}{q}} . \]
In order to determine the summand of $R^{(q)} (z)$ that does not depend on
$\Psi$, we just have to compute the $R$-transform of $\tmmathbf{\mu}^{(q)}$
for $\Psi \equiv 0$. In that case $\tmmathbf{\mu}_k^{(q)} = \frac{1}{(k + 1)
!} \prod_{i = 1}^k (i - q)$, which is the $k$-th moment of $\beta (1 - q, 1 +
q)$. Thus, the above observations make us believe that
\begin{equation}
  R^{(q)} (z) = e_q (z) \Psi' (e_q (z)) + R_{\beta (1 - q, 1 + q)} (z) = e_q
  (z) \Psi' (e_q (z)) + \frac{e_q (z)}{e_q (z) - 1} - \frac{1}{z},
  \label{xana}
\end{equation}
for $|z|$ small enough. In this subsection we will prove this claim.

We will use a different approach in order to show convergence of $m_{N, P
\text{} P (q)} [\varrho (N)]$, as $N \rightarrow \infty$. Although our
strategy remains the same, we have to modify the differential operator
$\mathcal{D}_k^{U (N), q}$ and to replace the Schur generating functions by
another sequence of symmetric functions of $N$ variables, analytic in a
neighborhood of $0^N$. We will rely on the fact that we can immediately
extract the $R$-transform of a probability measure on $\mathbb{R}$ when its
moments are given by certain formulas (see Lemma 4 of {\cite{B41}}). The same
strategy was followed in {\cite{B41}} in order to give an alternate proof for
the relation in (\ref{16}) concerning $R^{(0)}$.

\begin{definition}
  Let $\varrho (N)$ be a sequence of probability measures on $\hat{U} (N)$, $N
  \in \mathbb{N}$. For $q \in [- 1, 1]$, we define the $q$-deformed Schur
  generating functions with respect to $\varrho (N)$:
  \[ T_{\varrho (N)}^{(q)} (u_1, \ldots, u_N) \assign \left( \prod_{1 \leq i <
     j \leq N} \frac{e_q (u_i) - e_q (u_j)}{u_i - u_j} \right) S_{\varrho (N)}
     (e_q (u_1), \ldots, e_q (u_N)) . \]
\end{definition}

Of course, $T_{\varrho (N)}^{(q)}$ is a symmetric function, well defined in a
neighborhood of $0^N$. Assuming that (\ref{assumption}) holds, this determines
the asymptotics of the normalized logarithm of $T_{\varrho (N)}^{(q)}$. The
next step is to introduce the differential operator that we will apply to
$T_{\varrho (N)}^{(q)}$: For $k \in \mathbb{N}$ and an analytic function $f$
of $N$ variables, define
\[ \mathfrak{D}_k^{U (N), q} (f) (u_1, \ldots, u_N) \assign \left( \prod_{1
   \leq i < j \leq N} \frac{1}{u_i - u_j} \right)  \sum_{i = 1}^N (1 -
   qu_i)^{k + 1} \partial_i^k \left( \prod_{1 \leq i < j \leq N} (u_i - u_j) f
   (u_1, \ldots, u_N) \right) . \]
\begin{proposition}
  \label{p14}For every $\mathlambda (N) = (\mathlambda_1 (N) \geq \cdots \geq
  \mathlambda_N (N)) \in \hat{U} (N)$ and $k \in \mathbb{N}$ we have
  \[ \frac{1}{\chi^{\mathlambda (N)} (1^N)}  \left. \mathfrak{D}_k^{U (N), q}
     \left( \frac{\det (e_q (u_i)^{\mathlambda_j (N) + N - j})_{1 \leq i, j
     \leq N}}{\prod_{1 \leq i < j \leq N} (u_i - u_j)} \right) \right|_{u_1 =
     \cdots = u_N = 0} \]
  \begin{equation}
    \text{{\hspace{10em}}} = \sum_{i = 1}^N \left( \prod_{j \neq i}
    \frac{(\mathlambda_i (N) - i) - (\mathlambda_j (N) - j) -
    q}{(\mathlambda_i (N) - i) - (\mathlambda_j (N) - j)} \right) \prod_{j =
    0}^{k - 1} (\mathlambda_i (N) + N - i + j \text{} q) .
  \end{equation}
\end{proposition}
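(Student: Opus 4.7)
The plan is to follow the determinant bookkeeping used in the proof of Proposition \ref{p9}, adapted to the operator $\mathfrak{D}_k^{U (N), q}$ and to
\[
F (u_1, \ldots, u_N) \assign \frac{\det (e_q (u_i)^{\mathlambda_j (N) + N - j})_{1 \leq i, j \leq N}}{\prod_{1 \leq i < j \leq N} (u_i - u_j)} .
\]
Set $m_j \assign \mathlambda_j (N) + N - j$ and $\tilde{m}_j^{(k)} \assign \prod_{l = 0}^{k - 1} (m_j + l q)$.

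First I would establish the single-variable identity
\[
(1 - q u)^{k + 1} \partial_u^k e_q (u)^m = \tilde{m}^{(k)} \, e_q (u)^{m - q},
\]
which follows from $e_q (u) = (1 - q u)^{- 1 / q}$ by differentiating $k$ times and using $(1 - q u) = e_q (u)^{- q}$. Since $e_q (u_{i'})^{m_j}$ depends on $u_i$ only through row $i' = i$, multilinearity of the determinant gives
\[
(1 - q u_i)^{k + 1} \partial_i^k \det (e_q (u_{i'})^{m_j}) = \det (C_i),
\]
where $C_i$ is obtained from $(e_q (u_{i'})^{m_j})$ by replacing row $i$ with $(\tilde{m}_j^{(k)} \, e_q (u_i)^{m_j - q})_{j = 1}^N$. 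Expanding $\det (C_i)$ along row $i$, swapping the summations over $i$ and the column index, and reassembling via a column expansion yields
\[
\sum_{i = 1}^N \det (C_i) = \sum_{j = 1}^N \tilde{m}_j^{(k)} \det (D_j),
\]
where $D_j$ is the matrix obtained from $(e_q (u_{i'})^{m_j})$ by replacing column $j$ with $(e_q (u_{i'})^{m_j - q})_{i' = 1}^N$. This identifies $D_j$ as the matrix corresponding to the shifted ``signature'' $\mathlambda^{(- q)}_j (N) = (\mathlambda_1 (N), \ldots, \mathlambda_j (N) - q, \ldots, \mathlambda_N (N))$ appearing already in Proposition \ref{p9}.

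The remaining step, which is the main technical point, is to evaluate $\det (D_j) / \prod_{i < j} (u_i - u_j)$ at $u_1 = \cdots = u_N = 0$. Both factors vanish there and $m_j - q$ is in general non-integer, so the value must be computed as a limit. I would expand $e_q (u)^a = \sum_{l \geq 0} \frac{a (a + q) \cdots (a + (l - 1) q)}{l !} u^l$, isolate the lowest-degree antisymmetric contribution to the determinant (which is proportional to the Vandermonde $\prod_{i < j}(u_i - u_j)$), and compute the coefficient as a classical Vandermonde determinant in the $a$-variables. This yields the generalized Weyl formula
\[
\left. \frac{\det (e_q (u_{i'})^{a_{j'}})}{\prod_{i < j} (u_i - u_j)} \right|_{u = 0^N} = \prod_{1 \leq \alpha < \beta \leq N} \frac{a_{\alpha} - a_{\beta}}{\beta - \alpha},
\]
equivalently obtainable from the Harish-Chandra integral formula used in the proof of Proposition \ref{p9}.

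Applying this with $a_{j'} = m_{j'} - q \delta_{j' j}$ and dividing by $\chi^{\mathlambda (N)} (1^N) = \prod_{\alpha < \beta} (m_{\alpha} - m_{\beta}) / (\beta - \alpha)$, only the factors where one of the indices equals $j$ survive; the two cases $\alpha < \beta = j$ and $j = \alpha < \beta$ each produce a factor of the form $(y_j - y_i - q)/(y_j - y_i)$ with $y_i \assign m_i$, giving
\[
\left. \frac{\det (D_j)}{\chi^{\mathlambda (N)} (1^N) \prod_{i < j} (u_i - u_j)} \right|_{u = 0^N} = \prod_{i \neq j} \frac{y_j - y_i - q}{y_j - y_i} .
\]
Combining this with the column-shift identity $\sum_i \det (C_i) = \sum_j \tilde{m}_j^{(k)} \det (D_j)$ and using $\tilde{m}_j^{(k)} = \prod_{l = 0}^{k - 1} (\mathlambda_j (N) + N - j + l q)$ yields the claim. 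The only delicate point is the Weyl-type evaluation with non-integer shifted entries, which the Taylor/Vandermonde argument above handles cleanly.
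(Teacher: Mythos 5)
Your proof is correct and follows essentially the same route as the paper, whose proof of Proposition \ref{p14} is simply ``same argument as in Proposition \ref{p9}'': the key identity $(1-qu)^{k+1}\partial_u^k e_q(u)^m=\prod_{l=0}^{k-1}(m+lq)\,e_q(u)^{m-q}$, the row-to-column determinant reshuffling, and the Weyl-type evaluation of the shifted determinant (via confluent Vandermonde or Harish-Chandra, as in Proposition \ref{p9}) are exactly the steps the paper intends. You have merely written out the details the paper leaves implicit.
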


\begin{proof}
  Same argument as in Proposition \ref{p9}.
\end{proof}

\begin{remark}
  Note that the operator $\mathfrak{D}_k^{U (N), 0}$ is significantly simpler
  from the operators $\mathfrak{D}_k^{U (N), q}$, $q \neq 0$. This operator is
  crucial for the study of the empirical spectral distribution of large
  unitarily invariant random matrices. In this context, it was studied in
  {\cite{B41}}, {\cite{B8}}. One of its important properties is that for an
  arbitrary sequence $f_N$ of analytic and symmetric functions of $N$
  variables such that for every $k$ finite $\left( (u_1, \ldots, u_k) \mapsto
  \frac{1}{N} \log \text{} f_N (u_1, \ldots, u_k, 0^{N - k}) \right)_{N \in
  \mathbb{N}}$ converges uniformly in an additive symmetric function in a
  neighborhood of $0^k$, we can determine explicitly the non-crossing
  partition expansion of $\lim_{N \rightarrow \infty} N^{- k - 1}
  \mathfrak{D}_k^{U (N), 0} (f_N) |_{u_1 = \cdots = u_N = 0}$. This means that
  we can determine the unique sequence $(\alpha_n)_{n \in \mathbb{N}}$ of
  reals for which we have that for every $k \in \mathbb{N}$ the above limit is
  equal to $\sum_{\pi \in \tmop{NC} (k)} \prod_{V \in \pi} \alpha_{|V|}$.
  That's the way that it was proved in {\cite{B41}} the formula of relation
  (\ref{16}) for the $R$-transform of the limiting measure
  $\tmmathbf{\mu}^{(q)}$, of Theorem \ref{12}, for $q = 0.$ For the case $q
  \neq 0$ the determination of the $R$-transform is a slightly more
  complicated process.
\end{remark}

\begin{theorem}
  \label{kolpa}Let $\varrho (N)$, $N \in \mathbb{N}$, be a sequence of
  probability measures on $\hat{U} (N)$. Moreover we assume that there exists
  a function $\Psi$, analytic in a neighborhood of $1$ such that
  \begin{equation}
    \lim_{N \rightarrow \infty} \frac{1}{N} \log \text{} S_{\varrho (N)} (u_1,
    \ldots, u_r, 1^{N - r}) = \sum_{i = 1}^r \Psi (u_i), \label{main}
  \end{equation}
  where the convergence is uniform in a neighborhood of $1^r$. Then for every
  $k \in \mathbb{N}$, the $k$-th moment of $m_{N, P \text{} P (q)} [\varrho
  (N)]$ converges as $N \rightarrow \infty$ in probability to the sum
  \begin{equation}
    \sum_{m = 0}^{k - 1} \frac{k!}{m! (m + 1) ! (k - m) !}  \left.
    \frac{\mathd^m}{\mathd u^m} \left( e_q (u) \Psi' (e_q (u)) + \frac{e_q
    (u)}{e_q (u) - 1} - \frac{1}{u} \right)^{k - m} \right|_{u = 0} .
    \label{19}
  \end{equation}
\end{theorem}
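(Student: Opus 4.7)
The plan is to run the differential-operator argument from Theorem \ref{12}, but now with $\mathfrak{D}_k^{U(N),q}$ acting on the $q$-deformed generating function $T_{\varrho(N)}^{(q)}$. Averaging Proposition \ref{p14} over $\varrho(N)$ gives
\[ \mathfrak{D}_k^{U(N),q}T_{\varrho(N)}^{(q)}\big|_{0^N} = \mathbb{E}_{\mathlambda(N)}\!\left[\sum_{i=1}^N w_i(\mathlambda(N))\prod_{j=0}^{k-1}\!\big(\mathlambda_i(N)+N-i+jq\big)\right], \]
where $w_i$ are the weights in (\ref{PerelomovPopovmeasure}). Expanding $\prod_{j=0}^{k-1}(x+jq)=x^k+\sum_{l<k}c_{l,k}x^l$ and using $\sum_i w_i=N$, the lower-order terms in this expansion produce, after division by $N^{k+1}$, lower moments of $m_{N,PP(q)}[\mathlambda(N)]$ rescaled by negative powers of $N$, hence $O(1/N)$. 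So $\mathbb{E}\big[\int t^k\,dm_{N,PP(q)}[\mathlambda(N)]\big] = N^{-k-1}\,\mathfrak{D}_k^{U(N),q}T_{\varrho(N)}^{(q)}|_{0^N}+O(1/N)$, and it suffices to compute the leading asymptotic of the right-hand side.

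The asymptotic form of $\tfrac1N\log T_{\varrho(N)}^{(q)}$ is obtained from (\ref{main}). Since $e_q(0)=1$ and $e_q'(0)=1$, substituting $e_q(u_i)$ into (\ref{main}) and computing the log of the Jacobian $\prod_{i<j}\tfrac{e_q(u_i)-e_q(u_j)}{u_i-u_j}$ when $N-r$ variables are set to $0$ yields
\[ \lim_{N\to\infty}\tfrac1N\log T_{\varrho(N)}^{(q)}(u_1,\ldots,u_r,0^{N-r})=\sum_{i=1}^r\tilde\Psi(u_i), \qquad \tilde\Psi(u):=\Psi(e_q(u))+\log\tfrac{e_q(u)-1}{u}, \]
uniformly near $0^r$. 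A direct computation using $e_q'(u)=e_q(u)/(1-qu)$ gives the identity $(1-qu)\tilde\Psi'(u)=R^{(q)}(u)+q$, where $R^{(q)}(u):=e_q(u)\Psi'(e_q(u))+\tfrac{e_q(u)}{e_q(u)-1}-\tfrac1u$ is the function appearing in (\ref{19}).

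The core step is the Leibniz/Vandermonde expansion of $\mathfrak{D}_k^{U(N),q}$, carried out exactly as in the proof of Theorem \ref{12}. Writing $T_{\varrho(N)}^{(q)}=\exp(N\cdot\tfrac1N\log T_{\varrho(N)}^{(q)})$, distributing $\partial_{b_0}^k$ between the exponential and the Vandermonde, and symmetrizing over $m+1$ distinct indices $b_0,\ldots,b_m$ that absorb $m$ factors of the form $1/(u_{b_0}-u_{b_l})$, the leading $N^{k+1}$ contribution collects the terms where all $k-m$ surviving derivatives of $\log T_{\varrho(N)}^{(q)}$ are first order. Each such derivative becomes $\tilde\Psi'(u)$ in the limit and the operator contributes the factor $(1-qu_{b_0})^{k+1}$. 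The main obstacle is the algebraic step of organizing these contributions together with the subleading Leibniz terms (those involving $\partial_{b_0}^l\log T_{\varrho(N)}^{(q)}$ for $l\geq 2$) so that $(1-qu)^{k+1}(\tilde\Psi'(u))^{k-m}$ is replaced by $R^{(q)}(u)^{k-m}$ inside the sum; since $R^{(q)}=(1-qu)\tilde\Psi'-q$, this requires tracking how the $-q$ shift at each of the $k-m$ factors interacts with the $(1-qu)^{k+1}$ prefactor and the higher-order derivative contributions. A clean way to carry this out is to package the claim as a generating-function identity and verify it in the spirit of Lemma~4 of \cite{B41} via Lagrange inversion; the combinatorial coefficient $\tfrac{k!}{m!(m+1)!(k-m)!}$ then emerges from the multinomial count of derivative distributions and the symmetrization. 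Finally, convergence in probability upgrades from convergence in expectation via the same $q=0$ reduction (based on Proposition \ref{Prop9} and the variance estimate of \cite{B1}) used at the end of the proof of Theorem \ref{12}.
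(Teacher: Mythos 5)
Your proposal follows essentially the same route as the paper: average Proposition \ref{p14} over $\varrho(N)$, observe that the lower-order terms of $\prod_{j=0}^{k-1}(x+jq)$ only contribute $O(1/N)$ after normalization, pass to $T^{(q)}_{\varrho(N)}$ with the limit $\tilde\Psi(u)=\Psi(e_q(u))+\log\frac{e_q(u)-1}{u}$, and extract the $N^{k+1}$ coefficient of $\mathfrak{D}_k^{U(N),q}T^{(q)}_{\varrho(N)}|_{0^N}$ via the Vandermonde/Leibniz expansion. Your identity $(1-qu)\tilde\Psi'(u)=F_q(u)+q$ (with $F_q(u)=e_q(u)\Psi'(e_q(u))+\frac{e_q(u)}{e_q(u)-1}-\frac1u$) is correct and is exactly the ingredient the paper uses, in the equivalent form $\tilde\Psi'(u)=e_q^{q}(u)(F_q(u)+q)$.

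However, the step you defer — showing that
\[ \sum_{m=0}^{k}\frac{k!}{m!(m+1)!(k-m)!}\left.\frac{\mathd^m}{\mathd u^m}\Bigl((1-qu)^{m+1}\bigl(F_q(u)+q\bigr)^{k-m}\Bigr)\right|_{u=0} \]
collapses to the $q$-free expression $\sum_{m}\frac{k!}{m!(m+1)!(k-m)!}\frac{\mathd^m}{\mathd u^m}F_q^{k-m}(u)|_{u=0}$ of (\ref{19}) — is the one piece of the argument the paper actually writes out in full, and in your write-up it remains an unverified assertion. Your suggestion to ``package the claim as a generating-function identity and verify it via Lagrange inversion'' is plausible but not executed, and it is not a triviality: the $+q$ shifts from the binomial expansion of $(F_q+q)^{k-m}$ must cancel exactly against the Leibniz derivatives of the residual prefactor $(1-qu)^{m+1}$ after reindexing. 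The paper does this directly: expand $\frac{\mathd^m}{\mathd u^m}$ by Leibniz (using $\frac{\mathd^{m-l}}{\mathd u^{m-l}}(1-qu)^{m+1}|_{u=0}=(-q)^{m-l}\frac{(m+1)!}{(l+1)!}$), expand $(F_q+q)^{k-m}$ binomially, exchange the order of summation, and apply the binomial theorem twice to telescope the $q$-powers. You should either carry out this triple-sum manipulation or give a genuine generating-function proof; as written, the conclusion that the limit equals (\ref{19}) rather than the $(1-qu)^{m+1}$-deformed expression is not established. A second, minor point: Lemma 4 of \cite{B41} is used in the paper to read off the $R$-transform from moments already in the form (\ref{19}), not to prove this combinatorial identity, so citing it here does not close the gap. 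The remaining ingredients (uniform convergence of $\frac1N\log T^{(q)}_{\varrho(N)}$ near $0^r$, and the upgrade to convergence in probability via the $q=0$ reduction through Proposition \ref{Prop9}) are handled correctly and match the paper.
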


\begin{proof}
  Taking into account Theorem \ref{12}, it suffices to show convergence of the
  $k$-th moment of $\mathbb{E} [m_{N, P \text{} P (q)} [\varrho (N)]]$ to
  (\ref{19}). The assumption (\ref{main}) implies that
  \begin{equation}
    \lim_{N \rightarrow \infty} \frac{1}{N} \log T^{(q)}_{\varrho (N)} (u_1,
    \ldots, u_r, 0^{N - r}) = \sum_{i = 1}^r \Psi (e_q (u_i)) + \log \left(
    \frac{e_q (u_i) - 1}{u_i} \right), \label{conv}
  \end{equation}
  uniformly in a neighborhood of $0^r$. The strategy of our proof is the same
  as in Theorem \ref{12}, namely we will apply the differential operator
  $\mathfrak{D}_k^{U (N), q}$ to $T^{(q)}_{\varrho (N)}$ and set $u_1 = \cdots
  = u_N = 0$ in order to get an expansion of the form $N^{k + 1} M_{0, k,
  N}^{(q)} (\Psi) + N^k M_{1, k, N}^{(q)} (\Psi) + \cdots + M_{k + 1, k,
  N}^{(q)} (\Psi)$, for
  \begin{equation}
    \mathbb{E}_{\mathlambda (N)} \left[ \sum_{i = 1}^N \left( \prod_{j \neq i}
    \frac{(\mathlambda_i (N) - i) - (\mathlambda_j (N) - j) -
    q}{(\mathlambda_i (N) - i) - (\mathlambda_j (N) - j)} \right) \prod_{j =
    0}^{k - 1} (\mathlambda_i (N) + N - i + j \text{} q) \right] . \label{21}
  \end{equation}
  The sequences $(M_{i, k, N}^{(q)} (\Psi))_{N \in \mathbb{N}}$ will converge.
  Also since we have already proved that (\ref{main}) implies convergence for
  the moments of $m_{N, P \text{} P (q)} [\varrho (N)]$, a suitable expansion
  for (\ref{21}) suffices in order to prove the claim.
  
  Using the same arguments as in Lemma 1 of {\cite{B41}} we have that
  \begin{equation}
    \mathfrak{D}_k^{U (N), q} (T^{(q)}_{\varrho (N)}) (u_1, \ldots, u_N) =
    \sum_{m = 0}^k \sum_{\underset{l_i \neq l_j \text{ for } i \neq j}{l_0,
    \ldots, l_m = 1}}^N \binom{k}{m}  \frac{(1 - qu_{l_0})^{k + 1}
    \partial_{l_0}^{k - m} T^{(q)}_{\varrho (N)} (u_1, \ldots, u_N)}{(u_{l_0}
    - u_{l_1}) \ldots (u_{l_0} - u_{l_m})}, \label{22}
  \end{equation}
  and we write $T^{(q)}_{\varrho (N)} = \exp \left( N \cdot \frac{1}{N} \log
  T^{(q)}_{\varrho (N)} \right)$ in order to get the desirable expansion.
  Then, the reasons that lead to the expansion that we claimed are the same as
  in Theorem \ref{12} and they are explained in detail in Theorem 8 of
  {\cite{B41}}. Since at this moment we are interested in $\lim_{N \rightarrow
  \infty} M_{0, k, N}^{(q)} (\Psi)$, we concentrate on how from equality
  (\ref{22}) we can compute $M_{0, k, N}^{(q)} (\Psi)$.
  
  Applying the chain rule for the derivatives $\partial_{l_0}^{k - m} \exp
  \left( N \cdot \frac{1}{N} \log T^{(q)}_{\varrho (N)} \right)$, the only
  summands that will contribute to $N^{k + 1} M_{0, k, N}^{(q)} (\Psi)$ are
  $N^{k - m} T^{(q)}_{\varrho (N)} \left( \partial_{l_0} \left( \frac{1}{N}
  \log T^{(q)}_{\varrho (N)} \right) \right)^{k - m}$. This is the case
  because the summands
  \[ \binom{k}{m} N^{k - m} \frac{(1 - qu_{b_0})^{k + 1} \left( \partial_{b_0}
     \left( \frac{1}{N} \log T^{(q)}_{\varrho (N)} \right) \right)^{k -
     m}}{(u_{b_0} - u_{b_1}) \ldots (u_{b_0} - u_{b_m})} +
     \text{{\hspace{17em}}} \]
  \begin{equation}
    \text{{\hspace{10em}}} \ldots + \binom{k}{m} \left. N^{k - m} \frac{(1 -
    qu_{b_m})^{k + 1} \left( \partial_{b_m} \left( \frac{1}{N} \log
    T^{(q)}_{\varrho (N)} \right) \right)^{k - m}}{(u_{b_m} - u_{b_0}) \ldots
    (u_{b_m} - u_{b_{m - 1}})} \right|_{u_1 = \cdots = u_N = 0}, \label{fwta}
  \end{equation}
  of $\mathfrak{D}_k^{U (N), q} T^{(q)}_{\varrho (N)} |_{u_1 = \cdots = u_N =
  0}$, do not depend on $b_0, \ldots, b_m \in \{1, \ldots, N\}$. The explicit
  computation of (\ref{fwta}) is a far complicated procedure, however due to
  (\ref{conv}) we are interested in its summands that include only derivatives
  where we differentiate with respect to only one variable. These can be
  clarified easily due to the results of {\cite{B1}}, {\cite{B41}}.
  
  Summarizing, by the above we have that
  \[ M_{0, k, N}^{(q)} (\Psi) = \sum_{m = 0}^k \frac{k!}{m! (m + 1) ! (k - m)
     !} \partial_1^m \left( (1 - qu_1)^{k + 1} \left( \partial_1 \left(
     \frac{1}{N} \log T^{(q)}_{\varrho (N)} \right) \right)^{k - m} \right)
     (0^N) + o (1), \]
  and since $e_q' (u) = e_q^{q + 1} (u)$, this converges to
  \begin{equation}
    \sum_{m = 0}^k \frac{k!}{m! (m + 1) ! (k - m) !}  \frac{\mathd^m}{\mathd
    u^m} \left. \left( \frac{1}{e_q^{(m + 1) q} (u)} \left( e_q (u) \Psi' (e_q
    (u)) + \frac{e_q (u)}{e_q (u) - 1} - \frac{1}{u \text{}} + q \right)^{k -
    m} \right) \right|_{u = 0} . \label{j}
  \end{equation}
  In order to make it clear why (\ref{j}) is equal to (\ref{19}) we consider
  \[ F_q (u) \assign e_q (u) \Psi' (e_q (u)) + \frac{e_q (u)}{e_q (u) - 1} -
     \frac{1}{u} . \]
  Then, the $m$-th order derivative in (\ref{j}) is equal to
  \[ \sum_{n = 0}^{k - m} \sum_{l = 0}^m \binom{k - m}{n} \binom{m}{l}
     \frac{(m + 1) !}{(l + 1) !} q^n (- q)^{m - l}  \left.
     \frac{\mathd^l}{\mathd u^l} F_q^{k - m - n} (u) \right|_{u = 0} . \]
  As a corollary, (\ref{j}) is equal to
  \[ \sum_{l = 0}^k \sum_{n = 0}^{k - l} \sum_{m = l}^{k - n} \frac{k!}{l! (l
     + 1) ! (k - l) !} \binom{k - l}{n} \binom{k - l - n}{m - l} q^n (- q)^{m
     - l}  \left. \frac{\mathd^l}{\mathd u^l} F_q^{k - m - n} (u) \right|_{u =
     0} \]
  \[ = \sum_{l = 0}^k \sum_{n = 0}^{k - l} \frac{k!}{l! (l + 1) ! (k - l) !}
     \binom{k - l}{n} q^n  \left. \frac{\mathd^l}{\mathd u^l} (F_q (u) - q)^{k
     - l - n} \right|_{u = 0} \]
  \[ = \sum_{l = 0}^k \frac{k!}{l! (l + 1) ! (k - l) !}  \left.
     \frac{\mathd^l}{\mathd u^l} F_q^{k - l} (u) \right|_{u = 0} . \]
\end{proof}

\begin{remark}
  Comparing Theorem \ref{12} and Theorem \ref{kolpa} we see that if the
  relation (\ref{main}) is satisfied for a sequence $\varrho (N)$ of
  probability measures on $\hat{U} (N)$, $N \in \mathbb{N}$, then (\ref{19})
  must be equal to (\ref{moments}). The significance of formula (\ref{19}) for
  the moments is that it immediately gives us the $R$-transform of the
  limiting measure $\tmmathbf{\mu}^{(q)}$. That's the upshot of Lemma 4 of
  {\cite{B41}}, from which we deduce that our claim for the $R$-transform
  $R^{(q)}$ holds, i.e. it is given by formula (\ref{xana}).
\end{remark}

\begin{remark}
  \label{Remark}In the framework of Theorem \ref{kolpa}, obviously the
  $1$-deformed Perelomov-Popov measure joins the regime of free probability,
  in the sense that $R^{(1)}$ is linear with respect to $\Psi$. Under slight
  modifications the same holds true for the second boundary case $q = - 1$.
  More precisely for the shifted by $1$ Perelomov-Popov measure
  \[ m^{(- 1)}_{N, P \text{} P (- 1)} [\varrho (N)] \assign \frac{1}{N}
     \sum_{i = 1}^N \left( \prod_{j \neq i} \frac{(\mathlambda_i (N) - i) -
     (\mathlambda_j (N) - j) + 1}{(\mathlambda_i (N) - i) - (\mathlambda_j (N)
     - j)} \right) \delta \left( \frac{\mathlambda_i (N) - i}{N} \right), \]
  where $\mathlambda (N) \in \hat{U} (N)$ is random with distribution $\varrho
  (N)$, Theorem \ref{kolpa} implies that it converges to a probability measure
  with $R$-transform $z \mapsto (z + 1) \Psi' (z + 1)$.
\end{remark}

\subsection{Extreme Characters and $q$-deformed Perelomov-Popov
measures}\label{Y}

In this subsection we recall some connections between Schur generating
functions and extreme characters of the infinite unitary group. We recall that
for $q = - 1, 1$, the $q$-deformed quantized free convolution is the classical
free convolution, which is related to random matrices. For these particular
cases, classical probability distributions strongly related to random
matrices, are limits of $q$-deformed Perelomov-Popov measures, connected to
extreme characters of $U (\infty)$.

We denote by $U (\infty)$ the group of infinite unitary matrices $U = (U_{i,
j})_{i, j \in \mathbb{N}}$ with finitely many entries $U_{i, j}$ different
from $\delta_{i, j}$. An important class of functions on $U (\infty)$ are
characters, which are central normalized positive-definite functions on $U
(\infty)$ that are in one-to-one correspondence with the spherical unitary
representations, see e.g. in {\cite{B29}}, {\cite{B28}}. Given a character
$\tmmathbf{\chi}: U (\infty) \rightarrow \mathbb{C}$, there is a sequence of
probability measures $\varrho (N)$ on $\hat{U} (N)$, $N \in \mathbb{N}$, such
that $\tmmathbf{\chi} (\tmop{diag} (u_1, \ldots, u_N, 1, 1, \ldots)) =
S_{\varrho (N)} (u_1, \ldots, u_N)$.

A special role from the side of representation theory play the extreme points
of the convex set of characters of $U (\infty)$, which are known as extreme
characters. The classification of extreme characters emerges from the
Edrei-Voiculescu theorem (see e.g. {\cite{B30}}, {\cite{B31}}). According to
this theorem, an arbitrary extreme character $\tmmathbf{\chi}: U (\infty)
\rightarrow \mathbb{C}$ has a multiplicative form, i.e.
\begin{equation}
  \tmmathbf{\chi} (U) = \prod_{u \in \tmop{Spectrum} (U)} \Phi_{\omega} (u),
  \text{\quad for every\quad} U \in U (\infty), \label{extremecharacters}
\end{equation}
where $\Phi_{\omega} : \{u \in \mathbb{C} \of |u| = 1\} \rightarrow
\mathbb{C}$ is a complex function such that $\Phi_{\omega} (1) = 1$. To be
more precise, the function $\Phi_{\omega}$ is parametrized by a family of
variables $\omega = (\{\alpha_i^+ \}_{i \geq 0}, \{\alpha_i^- \}_{i \geq 0},
\{\beta_i^+ \}_{i \geq 0}, \{\beta_i^- \}_{i \geq 0}, \delta^+, \delta^-)$
where all the sequences in $\omega$ are non-increasing sequences of
non-negative reals and $\delta^+, \delta^- \geq 0$. Moreover we have that
\[ \sum_{i = 1}^{\infty} (\alpha_i^{\pm} + \beta_i^{\pm}) \leq \delta^{\pm}
   \text{\quad and\quad} \beta_1^+ + \beta_1^- \leq 1. \]
For such an $\omega$, the function $\Phi_{\omega}$ is given by
\[ \Phi_{\omega} (u) = \exp (\gamma^+ (u - 1) + \gamma^- (u^{- 1} - 1))
   \prod_{i = 1}^{\infty} \frac{(1 + \beta_i^+ (u - 1)) (1 + \beta_i^- (u^{-
   1} - 1))}{(1 + \alpha_i^+ (u - 1)) (1 - \alpha_i^- (u^{- 1} - 1))}, \]
where $\gamma^{\pm} \assign \delta^{\pm} - \sum_{i \geq 0} (\alpha_i^{\pm} +
\beta_i^{\pm}) \geq 0$. The function $\Phi_{\omega}$ is known as Voiculescu
function with parameter $\omega$.

As a corollary of relation (\ref{extremecharacters}), the logarithm of Schur
generating functions of probability measures on $\hat{U} (N)$ that correspond
to extreme characters, is an additive function for every $N \in \mathbb{N}$
and not just asymptotically. Therefore, Theorem \ref{12} and Theorem
\ref{kolpa} can be applied in order to determine the limiting measures of
$q$-deformed Perelomov-Popov measures of signatures that are distributed
according to the above probability measures on $\hat{U} (N)$. For example, let
$\tmmathbf{\chi}_N^{\gamma} : U (\infty) \rightarrow \mathbb{C}$ be the
extreme characters that correspond to the Voiculescu functions
$\Phi_N^{\gamma} (u) = \exp (N \gamma (u - 1))$. Then we can consider the
probability measures $\varrho^{\gamma} (N)$ on $\hat{U} (N)$, $N \in
\mathbb{N}$, that satisfy the relation
\begin{equation}
  S_{\varrho^{\gamma} (N)} (u_1, \ldots, u_N) =\tmmathbf{\chi}_N^{\gamma}
  (\tmop{diag} (u_1, \ldots, u_N, 1, 1, \ldots)) = \prod_{i = 1}^N
  \Phi_N^{\gamma} (u_i), \label{dani}
\end{equation}
for every $N \in \mathbb{N}$ and $u_1, \ldots, u_N \in \{u \in \mathbb{C} \of
|u| = 1\}$. For $q = 0$, it was proved in {\cite{B42}}, {\cite{B2}} that the
probability density function of the corresponding limiting measure of Theorem
\ref{12}, is given by
\[ f_{\gamma} (x) = \frac{1}{\mathpi} \tmmathbf{1}_{[\gamma + 1 - 2
   \sqrt{\gamma}, \gamma + 1 + 2 \sqrt{\gamma}]} (x) \arccos \left( \frac{x -
   1 + \gamma}{2 \sqrt{\gamma x}} \right), \]
for $\gamma > 1$ and
\[ f_{\gamma} (x) = \frac{1}{\mathpi} \tmmathbf{1}_{[\gamma + 1 - 2
   \sqrt{\gamma}, \gamma + 1 + 2 \sqrt{\gamma}]} (x) \arccos \left( \frac{x -
   1 + \gamma}{2 \sqrt{\gamma x}} \right) +\tmmathbf{1}_{[0, \gamma + 1 - 2
   \sqrt{\gamma}]} (x), \]
for $\gamma < 1$. This probability distribution is known as the one-sided
Plancherel distribution.

For the cases $q = - 1, 1$, the (shifted) Perelomov-Popov measures that
correspond to certain extreme characters, converge to celebrated probability
distributions such as the semicircle distribution or the Marchenko-Pastur
distribution. For example the $1$-deformed Perelomov-Popov measure of
signatures that are distributed according to the probability measures
described in (\ref{dani}), converges to the probability measure on
$\mathbb{R}$, with $R$-transform $R_{\gamma} (z) = \frac{\gamma}{1 - z}$. This
is the Marchenko-Pastur distribution (or free Poisson distribution) with rate
$\gamma$ (see in {\cite{B15}}). Analogously, for a sequence of random
signatures $(\mathlambda_1 (N) \geq \cdots \geq \mathlambda_N (N)) \in \hat{U}
(N)$ distributed according to the measure $\varrho^{\gamma} (N)$ of
(\ref{dani}), the random measure
\begin{equation}
  m_{N, P \text{} P (- 1)}^{(- \gamma - 1)} [\varrho^{\gamma} (N)] \assign
  \frac{1}{N} \sum_{i = 1}^N \left( \prod_{j \neq i} \frac{(\mathlambda_i (N)
  - i) - (\mathlambda_j (N) - j) + 1}{(\mathlambda_i (N) - i) - (\mathlambda_j
  (N) - j)} \right) \delta \left( \frac{\mathlambda_i (N) - \gamma N - i}{N}
  \right), \label{27}
\end{equation}
converges to the semicircle (or free Gaussian) distribution of variance
$\gamma$,
\[ \tmmathbf{\mu}_{s \text{} c}^{\gamma} (d \text{} x) = \frac{1}{2 \mathpi
   \gamma} \tmmathbf{1}_{[- 2 \sqrt{\gamma}, 2 \sqrt{\gamma}]} (x) \sqrt{4
   \gamma - x^2} d \text{} x. \text{} \]
The Marcheno-Pastur and the semicircle distribution appear in random matrix
theory as limits of the empirical spectral distribution of Wishart and
Gaussian matrices respectively (see in {\cite{B17}}).

Alternatively, from the extreme characters with Voiculescu functions
$\Psi_N^{\gamma} (u) = \exp (N \gamma (u^{- 1} - 1))$, we can get probability
measures $\rho^{\gamma} (N)$ on $\hat{U} (N)$, $N \in \mathbb{N}$, such that
$S_{\rho^{\gamma} (N)} (u_1, \ldots, u_N) = \prod_{i = 1}^N \Psi_N^{\gamma}
(u_i)$. From Theorem \ref{kolpa}, we have that for a sequence of random
signatures $(\mathlambda_1 (N) \geq \cdots \geq \mathlambda_N (N)) \in \hat{U}
(N)$ distributed according to the measure $\rho^{\gamma} (N)$, the random
measure
\begin{equation}
  m_{N, P \text{} P (1)}^{(+ \gamma)} [\rho^{\gamma} (N)] \assign \frac{1}{N}
  \sum_{i = 1}^N \left( \prod_{j \neq i} \frac{(\mathlambda_i (N) - i) -
  (\mathlambda_j (N) - j) - 1}{(\mathlambda_i (N) - i) - (\mathlambda_j (N) -
  j)} \right) \delta \left( \frac{\mathlambda_i (N) + (\gamma + 1) N - i}{N}
  \right), \label{28}
\end{equation}
converges to $\tmmathbf{\mu}_{s \text{} c}^{\gamma}$. The fact that the limit
of (\ref{27}) is equal to the limit of (\ref{28}) should not be a surprise.
More generally, let $X$ be a random variable such that its moments are given
by (\ref{19}) and $Y$ be another random variable such that its moments are
also given by (\ref{19}) if we replace $q$ with $- q$ and the function $\Psi$
with $u \mapsto \Psi (u^{- 1})$. Then we have that $\mathbb{E} [X^k]
=\mathbb{E} [(1 - Y)^k]$ for every $k \in \mathbb{N}$.

In Example \ref{mainexample}, we compute the probability density function of
the limiting measure $m_{N, P \text{} P (q)} [\varrho^{\gamma} (N)]$, for
every $q \in [- 1, 1]$. This gives an interpolation between the probability
distributions mentioned before.

\section{Perelomov-Popov measures and infinitesimal free probability}

In section \ref{.} we showed how from a particular limit regime for the Schur
generating function we can deduce convergence of the $q$-deformed
Perelomov-Popov measures and analyze the limit from the side of free
probability. In this section we consider a different limit regime which
includes the framework of Theorem \ref{12} and Theorem \ref{kolpa} but allows
us to go a step further and compute the correction to $\tmmathbf{\mu}^{(q)}$.
More specifically, in the following we examine the asymptotic additivity not
only for the normalized logarithm of the Schur generating function but also
for the logarithm itself. Through this limit regime we can study the signed
measure $N \left( m_{N, P \text{} P (q)} [\varrho (N)] -\tmmathbf{\mu}^{(q)}
\right)$, as $N \rightarrow \infty$. For $q$=0 the results of this section
have already been presented in {\cite{B41}}.

\subsection{Asymptotic behavior and infinitesimal moments}

In Theorem \ref{12} we justified how from the assumption (\ref{assumption}) it
emerges an asymptotic expansion for the moments of $\mathbb{E} [m_{N, P
\text{} P (q)} [\varrho (N)]]$ and we clarified explicitly the leading term of
this expansion (i.e. the sequence $(M_{0, k, N}^{(q)} (\Psi))_{N \in
\mathbb{N}}$ according to the notation in the proof of Theorem \ref{12}) and
its limit. Now we focus on the second term of this expansion (namely the
sequence $(M_{1, k, N}^{(q)} (\Psi))_{N \in \mathbb{N}}$). Although the limit
regime (\ref{assumption}) gives rise to such a term, it is reasonable to
consider a more general limit regime that leads to a more general formula.
This will have the advantage that it will allow us to compute the moments of
$N (\mathbb{E}[m_{N, P \text{} P (q)} [\varrho (N)]] -\tmmathbf{\mu}^{(q)})$,
as $N \rightarrow \infty$. In order to be done, by our analysis in the proof
of Theorem \ref{12} it is clear that the existence of the limits, $\lim_{N
\rightarrow \infty} \left( \partial_1^l (\log \text{} S_{\varrho (N)}) (1^N) -
N \Psi^{(l)} (1) \right)$, $\lim_{N \rightarrow \infty} \partial_{i_1}^{l_1}
\ldots \partial_{i_s}^{l_s} (\log \text{} S_{\varrho (N)}) (1^N)$ is
essential. The limit regime of the following theorem provides the simplest way
to guarantee it.

\begin{theorem}
  \label{meis}Let $\varrho (N), N \in \mathbb{N}$, be a sequence of
  probability measures on $\hat{U} (N)$. Moreover we assume that there exist
  functions $\Psi, \Phi$ analytic is a neighborhood of $1$ such that for every
  $k \in \mathbb{N}$ fixed,
  \begin{equation}
    \lim_{N \rightarrow \infty} N \left( \frac{1}{N} \log \text{} S_{\varrho
    (N)} (u_1, \ldots, u_k, 1^{N - k}) - \sum_{i = 1}^k \Psi (u_i) \right) =
    \sum_{i = 1}^k \Phi (u_i), \label{29}
  \end{equation}
  where the convergence is uniform in a neighborhood of $1^k$. Then for every
  $k \in \mathbb{N}$ we have
  \[ \lim_{N \rightarrow \infty} N \left( \mathbb{E}_{\mathlambda (N)} \left[
     \int_{\mathbb{R}} t^k m_{N, P \text{} P (q)} [\mathlambda (N)] (d \text{}
     t) \right] - \sum_{m = 0}^k \binom{k}{m} \frac{1}{(m + 1) !}  \left. 
     \frac{\mathd^m}{\mathd u^m} (u^{k - q} (\Psi' (u))^{k - m}) \right|_{u =
     1} \right) \]
  \begin{equation}
    \text{{\hspace{5em}}} = \sum_{m = 0}^{k - 1} \binom{k}{m + 1} \frac{1}{m!}
    \left. \frac{\mathd^m}{\mathd u^m} \left( u^{k - q} \left( \Phi' (u) -
    \frac{1 - q}{2 u} \right) (\Psi' (u))^{k - m - 1} \right) \right|_{u = 1}
    . \label{30}
  \end{equation}
\end{theorem}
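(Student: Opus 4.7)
The plan is to refine the asymptotic analysis that was carried out in the proof of Theorem \ref{12}. By Proposition \ref{p9} applied to $S_{\varrho(N)}$ (averaging commutes with the operator), the quantity $\mathbb{E}_{\mathlambda(N)}[\int t^k m_{N, PP(q)}[\mathlambda(N)](dt)]$ is equal to $N^{-k-1}\mathcal{D}_k^{U(N), q} S_{\varrho(N)}|_{u_1 = \cdots = u_N = 1}$. In the proof of Theorem \ref{12} it was shown that this quantity admits a polynomial-in-$N^{-1}$ expansion $M_{0, k, N}^{(q)}(\Psi) + N^{-1} M_{1, k, N}^{(q)} + O(N^{-2})$, with $\lim_{N \to \infty} M_{0, k, N}^{(q)}(\Psi) = \tmmathbf{\mu}_k^{(q)}$. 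Thus (\ref{30}) amounts to identifying the limit of $M_{1, k, N}^{(q)}$ under the sharper hypothesis (\ref{29}).

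First I would rewrite assumption (\ref{29}) as $\log S_{\varrho(N)}(u_1, \ldots, u_r, 1^{N-r}) = \sum_{i=1}^r (N\Psi(u_i) + \Phi(u_i)) + o(1)$, uniformly in a neighborhood of $1^r$, with mixed partial derivatives (in at least two distinct variables) tending to $0$, exactly as in Theorem \ref{MAINTHEOREM}. Substituting $S_{\varrho(N)} = \exp(N\mathcal{L}_N^{\Psi} + \mathcal{L}_N^{\Phi} + o(1))$ into the decomposition (\ref{9}) and applying the Leibniz/chain rule, one collects $N^k$ contributions of two distinct types. Type I arises by replacing exactly one factor of $\partial_{l_0}\mathcal{L}_N^{\Psi}$ in the leading expression for $M_{0, k, N}^{(q)}$ by $N^{-1}\partial_{l_0}(\log S_{\varrho(N)} - N\mathcal{L}_N^{\Psi}) \to \partial_{l_0}\Phi$; the symmetry factor for which of the $k - m$ factors is replaced together with the original $\binom{k}{m}/(m+1)!$ accounts precisely for the coefficient $\binom{k}{m+1}/m!$ in (\ref{30}) and the factor $\Phi'(u)(\Psi'(u))^{k-m-1}$. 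Type II gathers all contributions independent of $\Phi$: these stem from the subleading $n < k$ terms of (\ref{8}), from Leibniz derivatives landing on the $u_i^{1-q}$ prefactor of $\mathcal{D}_k^{U(N), q}$, and from the Vandermonde factor $\prod_{i<j}(u_i - u_j)$, all evaluated at $u_1 = \cdots = u_N = 1$.

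The main obstacle is bookkeeping Type II contributions into the clean universal shift $-\frac{1-q}{2u}$ appearing in (\ref{30}). A direct enumeration following the arguments of {\cite{B41}} (where the case $q = 0$ was carried out in detail) is possible but tedious; the $q$-dependent modifications enter only through the $u^{1-q}$ factor of $\mathcal{D}_k^{U(N), q}$, which on differentiation at $u = 1$ yields the telltale combinations $(1 - q), (1-q)(-q), \ldots$ that should ultimately assemble into the stated $-(1-q)/2u$. An economical alternative is to exploit universality: since the Type II contribution is a linear functional of $(\Psi'(u))^{k-m-1}$ that is independent of $\Phi$, it suffices to identify it on a single explicit test family. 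Taking the characters from Subsection \ref{Y} with Voiculescu parameter $\Phi_N^\gamma(u) = \exp(N\gamma(u-1))$, for which $\Psi(u) = \gamma(u-1)$ and $\Phi \equiv 0$, one reduces the identification to the $1/N$ expansion of the known moments of the corresponding limiting measures (the shifted Marchenko--Pastur family for $q = 1$, the one-sided Plancherel family for $q = 0$); analytic continuation in $q \in [-1, 1]$, together with the fact that both sides of (\ref{30}) are polynomial in $q$, then fixes the universal shift to be $-\frac{1-q}{2u}$ as claimed.
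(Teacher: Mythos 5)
Your overall skeleton is the right one and matches the paper's strategy: expand $\mathcal{D}_k^{U(N),q}S_{\varrho(N)}\longdownminus_{u_1=\cdots=u_N=1}$ as $N^{k+1}M^{(q)}_{0,k,N}+N^kM^{(q)}_{1,k,N}+\cdots$, extract the $\Phi$-dependent part of (\ref{30}) from $\lim_N N(M^{(q)}_{0,k,N}(\Psi)-\tmmathbf{\mu}_k^{(q)})$, and attribute the universal shift to $\lim_N M^{(q)}_{1,k,N}(\Psi)$. Your ``Type I'' computation, including the combinatorial identity $(k-m)\binom{k}{m}\frac{1}{(m+1)!}=\binom{k}{m+1}\frac{1}{m!}$, reproduces (\ref{31}) exactly.

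The gap is in your treatment of the ``Type II'' term. You propose to avoid the bookkeeping by arguing that this term is a $\Phi$-independent linear functional ``of $(\Psi'(u))^{k-m-1}$'' and then pinning it down on the single test family $\Psi(u)=\gamma(u-1)$, $\Phi\equiv 0$. This begs the question: a priori the Type II contribution is a polynomial in \emph{all} the derivatives $\Psi'(1),\Psi''(1),\dots,\Psi^{(k)}(1)$, and the paper's computation shows that terms proportional to $\Psi''(u)(\Psi'(u))^{k-m-2}$ genuinely appear at intermediate stages --- once from the overcounting correction to the symmetrized sums (\ref{esta}) with $l_1=k-m$ (the second line of (\ref{33})) and once from the sums with $l_1=k-m-2$, $l_2=1$ (namely (\ref{34})) --- and cancel only after these two sources are identified and matched. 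Your test family has $\Psi^{(j)}(1)=0$ for all $j\geq 2$, so it is blind to precisely these terms; verifying (\ref{30}) on it cannot rule out a surviving $\Psi''$-contribution, and asserting that the functional depends only on powers of $\Psi'$ is equivalent to the cancellation you are trying to avoid proving. (There is a secondary issue: obtaining the $1/N$ expansion of the finite-$N$ moments for the Plancherel/Marchenko--Pastur family to the required order is itself essentially the computation being bypassed, since the known results in the literature are the leading-order limits.) To close the gap you must either carry out the enumeration --- the $n=k-1$ term of (\ref{8}) with coefficient $\tfrac{k(k-1)}{2}$ giving (\ref{32}), the derivative falling on $u^{k-q}$ together with the $-\binom{N}{m+1}(m+1)!$ versus $N^{m+1}$ discrepancy giving (\ref{33}), and the $l_2=1$ sums giving (\ref{34}), whose total collapses to $-\tfrac{1-q}{2u}$ --- or replace the test family by one rich enough that $\Psi'(1),\Psi''(1),\dots$ can be varied independently, at which point the direct computation is no longer being avoided.
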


\begin{proof}
  Condition (\ref{29}) implies that we have an expansion of the form $N^{k +
  1} M_{0, k, N}^{(q)} (\Psi) + N^k M_{1, k, N}^{(q)} (\Psi) + \cdots + M_{k +
  1, k, N}^{(q)} (\Psi)$ for $\mathcal{D}_k^{U (N), q} S_{\varrho (N)}
  \longdownminus_{u_1 = \cdots = u_N = 1}$. This expansion is acquired in the
  same way as in Theorem \ref{12}. Due to (\ref{29}), the leading term will
  contribute to (\ref{30}) with $\lim_{N \rightarrow \infty} N (M_{0, k,
  N}^{(q)} (\Psi) -\tmmathbf{\mu}_k^{(q)})$, which is equal to
  \[ \sum_{m = 0}^k \binom{k}{m} \frac{1}{(m + 1) !} \lim_{N \rightarrow
     \infty} N \left( \partial_1^m \left( u_1^{k - q} \left( \partial_1 \left(
     \frac{1}{N} \log \text{} S_{\varrho (N)} \right) \right)^{k - m} \right)
     (1^N) - \left. \frac{\mathd^m}{\mathd u^m} (u^{k - q} (\Psi' (u))^{k -
     m}) \right|_{u = 1} \right) \]
  \begin{equation}
    = \sum_{m = 0}^{k - 1} \binom{k}{m + 1} \frac{1}{m!}  \left.
    \frac{\mathd^m}{\mathd u^m} (u^{k - q} \Phi' (u) (\Psi' (u))^{k - m - 1})
    \right|_{u = 1} . \label{31}
  \end{equation}
  In order to prove (\ref{30}) it is necessary to compute $\lim_{N \rightarrow
  \infty} M_{1, k, N}^{(q)} (\Psi)$. The summand
  \[ \frac{k (k - 1)}{2} \left( \prod_{1 \leq i < j \leq N} \frac{1}{u_i -
     u_j} \right)  \sum_{i = 1}^N u_i^{k - 1 - q} \partial_i^{k - 1} \left(
     \prod_{1 \leq i < j \leq N} (u_i - u_j) S_{\varrho (N)} (u_1, \ldots,
     u_N) \right), \]
  of $\mathcal{D}_k^{U (N), q} S_{\varrho (N)} (u_1, \ldots, u_N)$ may not
  play any role for $M_{0, k, N}^{(q)} (\Psi)$ but it does for $M_{1, k,
  N}^{(q)} (\Psi)$. Using the same arguments as in the proof of Theorem
  \ref{12} we have that it contributes to $\lim_{N \rightarrow \infty} M_{1,
  k, N}^{(q)} (\Psi)$ (or equivalently to (\ref{30})) with
  \begin{equation}
    \frac{k - 1}{2} \sum_{m = 0}^{k - 1} \binom{k}{m + 1} \frac{1}{m!}  \left.
    \frac{\mathd^m}{\mathd u^m} (u^{k - 1 - q} (\Psi' (u))^{k - 1 - m})
    \right|_{u = 1} . \label{32}
  \end{equation}
  The remaining part that contributes to $\lim_{N \rightarrow \infty} M_{1, k,
  N}^{(q)} (\Psi)$ comes from the summand of $\mathcal{D}_k^{U (N), q}
  S_{\varrho (N)} (u_1, \ldots, u_N)$ that gives rise to $M_{0, k, N}^{(q)}
  (\Psi)$ (described in the proof of Theorem \ref{12}). On the one hand,
  taking into account that the symmetrized sums (\ref{esta}) for $l_1 = k -
  m$, do not depend on $b_0, \ldots, b_m \in \{1, \ldots, N\}$ when $u_1 =
  \cdots = u_N = 1$, we see that they contribute to $M_{1, k, N}^{(q)} (\Psi)$
  with
  \[ - \sum_{m = 1}^k \frac{k!}{2 (m - 1) !m! (k - m) !} \partial_1^m \left(
     u_1^{k - q} \left( \partial_1 \left( \frac{1}{N} \log \text{} S_{\varrho
     (N)} \right) \right)^{k - m} \right) (1^N) + o (1), \]
  that converges as $N \rightarrow \infty$ to
  \[ \frac{- (k - q)}{2} \sum_{m = 0}^{k - 1} \binom{k}{m + 1} \frac{1}{m!} 
     \left. \frac{\mathd^m}{\mathd u^m} (u^{k - q - 1} (\Psi' (u))^{k - m -
     1}) \right|_{u = 1} \text{ {\hspace{9em}}} \]
  \begin{equation}
    \text{{\hspace{10em}}} - \sum_{m = 0}^{k - 2} \frac{k!}{2 m! (m + 1) ! (k
    - m - 2) !}  \left. \frac{\mathd^m}{\mathd u^m} (u^{k - q} \Psi'' (u)
    (\Psi' (u))^{k - m - 2}) \right|_{u = 1} . \label{33}
  \end{equation}
  On the other hand, the symmetrized sums (\ref{esta}) for $l_1 = k - m - 2$
  and $l_2 = 1$ will also contribute to $M_{1, k, N}^{(q)} (\Psi)$ as $N
  \rightarrow \infty$. Their contribution converges to
  \begin{equation}
    \sum_{m = 0}^{k - 2} \frac{k!}{2 m! (m + 1) ! (k - m - 2) !}  \left.
    \frac{\mathd^m}{\mathd u^m} (u^{k - q} \Psi'' (u) (\Psi' (u))^{k - m - 2})
    \right|_{u = 1} . \label{34}
  \end{equation}
  Note that only for these two cases symmetrized sums of the form
  (\ref{esta}) contribute to $M_{1, k, N}^{(q)} (\Psi)$. Thus (\ref{30}) is
  given by the sum of (\ref{31}), (\ref{32}), (\ref{33}), (\ref{34}), that
  proves the claim.
\end{proof}

\begin{remark}
  \label{vli}An interpolation between the limit regime of Theorem \ref{12}
  and the limit regime of Theorem \ref{meis} can be made if we consider
  the case where for some $0 \leq \varepsilon \leq 1$,
  \begin{equation}
    \lim_{N \rightarrow \infty} N^{\varepsilon} \left( \frac{1}{N} \log
    \text{} S_{\varrho (N)} (u_1, \ldots, u_k, 1^{N - k}) - \sum_{i = 1}^k
    \Psi (u_i) \right) = \sum_{i = 1}^k \Phi (u_i) \label{35}
  \end{equation}
  uniformly in a neighborhood of $1^k$, for every $k \in \mathbb{N}$ fixed.
  Then for $0 < \varepsilon < 1$ and $N \rightarrow \infty$, only the term
  $N^{\varepsilon} (M_{0, k, N}^{(q)} (\Psi) -\tmmathbf{\mu}_k^{(q)})$ will
  contribute to the $k$-th moment of $N^{\varepsilon} \left( \mathbb{E}[m_{N,
  P \text{} P (q)} [\varrho (N)]] -\tmmathbf{\mu}^{(q)} \right)$, which
  implies that it will converge to (\ref{31}).
\end{remark}

\begin{remark}
  Of course the condition (\ref{29}) implies the condition (\ref{assumption}).
  Thus, if (\ref{29}) holds then (\ref{19}) and (\ref{moments}) must coincide.
\end{remark}

\subsection{Asymptotic behavior and infinitesimal $R$-transform}

In Theorem \ref{meis} we showed how additional information for the
asymptotics of the logarithm of the Schur generating function leads to the
determination of the $1 / N$ correction to the limit of Theorem \ref{12}. The
dependence of the limiting probability measure from the function $\Psi$ has
been analyzed and explained from the side of free probability. In this
subsection we describe the free probabilistic toolbox that explains the
dependence of $\Psi$ and $\Phi$ from the $1 / N$ correction of Theorem
\ref{meis}. This fits into the context of infinitesimal free
probability.

Infinitesimal free probability is a generalization of free probability and
typically a theory of non-commutative random variables {\cite{B13}},
{\cite{B16}}, {\cite{B44}}, {\cite{B36}}, {\cite{B14}}, {\cite{B38}},
{\cite{B12}}. For our purposes, it suffices to keep the conversation in the
commutative setting. The key notion for us is that of infinitesimal free
cumulants which leads to an infinitesimal analogue of the $R$-transform.

\begin{definition}[F$\acute{\text{e}}$vrier-Nica, {\cite{B14}}]
  \label{FN}Let $(\tmmathbf{\mu}, \tmmathbf{\mu}')$ be a pair of measures on
  $\mathbb{R}$ where $\tmmathbf{\mu}$ is a probability measure and
  $\tmmathbf{\mu}'$ is a signed measure such that $\tmmathbf{\mu}'
  (\mathbb{R}) = 0$. The infinitesimal free cumulants $\{(\kappa_n
  (\tmmathbf{\mu}))_{N \in \mathbb{N}}, (\kappa_n (\tmmathbf{\mu}'))_{N \in
  \mathbb{N}} \}$ of $(\tmmathbf{\mu}, \tmmathbf{\mu}')$ are uniquely
  determined by the relations
  \[ \int_{\mathbb{R}} t^k \tmmathbf{\mu} (d \text{} t) = \sum_{\pi \in
     \tmop{NC} (k)} \prod_{V \in \pi} \kappa_{|V|} (\tmmathbf{\mu})
     \text{\quad and\quad} \int_{\mathbb{R}} t^k \tmmathbf{\mu}' (d \text{} t)
     = \sum_{\pi \in \tmop{NC} (k)} \sum_{V \in \pi} \kappa_{|V|}
     (\tmmathbf{\mu}') \prod_{\underset{W \neq V}{W \in \pi}} \kappa_{|W|}
     (\tmmathbf{\mu}), \]
  for every $k \in \mathbb{N}$.
\end{definition}

In order to connect the above definition with the framework of Theorem
\ref{meis}, we can think that the measure $\tmmathbf{\mu}$ arises as a
limit of a sequence of probability measures $(\tmmathbf{\mu}_N)_{N \in
\mathbb{N}}$ and $\tmmathbf{\mu}'$ is the $1 / N$ correction to this limit.
This means that
\[ \lim_{N \rightarrow \infty} N \left( \int_{\mathbb{R}} f
   (t)\tmmathbf{\mu}_N (d \text{} t) - \int_{\mathbb{R}} f (t)\tmmathbf{\mu}(d
   \text{} t) \right) = \int_{\mathbb{R}} f (t) \tmmathbf{\mu}' (d \text{} t),
\]
for sufficiently nice functions $f$. If we assume that the above equality
holds for every monomial then this implies that $\lim_{N \rightarrow \infty} N
(\kappa_n (\tmmathbf{\mu}_N) - \kappa_n (\tmmathbf{\mu})) = \kappa_n
(\tmmathbf{\mu}')$ for every $n \in \mathbb{N}$. In such a situation the
measure $\tmmathbf{\mu}'$ acts like a formal derivative of $\tmmathbf{\mu}$
and analogously $(\kappa_n (\tmmathbf{\mu}'))_{n \in \mathbb{N}}$ is a formal
derivative for $(\kappa_n (\tmmathbf{\mu}))_{N \in \mathbb{N}}$. That being
said, the relation in Definition \ref{FN} for the moments of $\tmmathbf{\mu}'$
is quite natural in the sense that it expresses the application of the Leibniz
rule in the moment-cumulant relations.

In what follows, we are making the connection between the $1 / N$ correction
of Theorem \ref{meis} and infinitesimal free probability, determining
explicitly the infinitesimal free cumulants $\{(\kappa_{n, q} (\Psi, \Phi))_{n
\in \mathbb{N}}, (\kappa_{n, q}' (\Psi, \Phi))_{n \in \mathbb{N}} \}$. This
leads to certain conclusions. Of course we have already determined the
sequence $(\kappa_{n, q} (\Psi, \Phi))_{n \in \mathbb{N}}$ and in order to get
an idea for the formula of $(\kappa_{n, q}' (\Psi, \Phi))_{n \in \mathbb{N}}$
one can look on relation (\ref{30}). Then, we observe the same kind of
phenomenon as for $(\kappa_{n, q} (\Psi, \Phi))_{n \in \mathbb{N}}$, namely
for small $n$ we have that $\kappa_{n, q}' (\Psi, \Phi)$ does not depend on
$\Psi$ and it consists of two summands such that the one is linear on $\Phi$ and the other on does not depend on $\Phi$.
For example from (\ref{30}) we have that
\[ \kappa_{1, q}' (\Psi, \Phi) = \Phi' (1) - \frac{1 - q}{2}, \text{\quad}
   \kappa_{2, q}' (\Psi, \Phi) = \Phi'' (1) + \Phi' (1), \]
\[ \text{and\quad} \kappa_{3, q}' (\Psi, \Phi) = \frac{1}{2} \Phi''' (1) +
   \frac{3 + q}{2} \Phi'' (1) \noplus + \frac{1 + q}{2} \Phi' (1) . \]
In order to clarify the exact form of dependence of $\kappa_{n, q}' (\Psi,
\Phi)$ from $\Phi$ we will use the alternative approach for the computation of
the $1 / N$ convergence, which is based on the differential operators
$\mathfrak{D}_k^{U (N), q}$. Before we state our theorem we describe in a few
words what we conclude from it: It is elementary to check that for the empty
signature $\mathlambda (N) = 0^N$ the measure $m_{N, P \text{} P (q)}
[\mathlambda (N)]$ converges to $\beta (1 - q, 1 + q)$ and the $1 / N$
correction to this limit is given by
\[ \lim_{N \rightarrow \infty} N \left( \int_{\mathbb{R}} t^k m_{N, P \text{}
   P (q)} [\mathlambda (N)] (d \text{} t) - \int_{\mathbb{R}} t^k \beta (1 -
   q, 1 + q) (d \text{} t) \right) = \frac{q - 1}{2} k \int_{\mathbb{R}} t^{k
   - 1} \beta (1 - q, 1 + q) (d \text{} t) \]
which immediately implies that the infinitesimal free cumulants are $\left(
\frac{q - 1}{2}, 0, 0, \ldots \right)$. Giving structure to $\mathlambda (N)$,
at random according to probability measures $\varrho (N)$ on $\hat{U} (N)$
that satisfy (\ref{29}), the limiting measure is affected very concretely by
adding the term $\frac{1}{(n - 1) !}  \left. \frac{\mathd^{n - 1}}{\mathd u^{n
- 1}} (e_q (u) \Psi' (e_q (u))) \right|_{u = 0}$ to the $n$-th free cumulant
of $\beta (1 - q, 1 + q)$, for every $n \in \mathbb{N}$ (i.e. like considering
the free convolution with another measure). Analogously the same rule holds
for the $1 / N$ correction, namely it will be affected by adding $\left(
\frac{1}{(n - 1) !}  \left. \frac{\mathd^{n - 1}}{\mathd u^{n - 1}} (e_q (u)
\Phi' (e_q (u))) \right|_{u = 0} \right)_{n \in \mathbb{N}}$ to $\left(
\frac{q - 1}{2}, 0, 0, \ldots \right)$. Consequently, the upshot of assumption
(\ref{29}) is that it reforms the limiting measure and its first order
correction by considering a very particular infinitesimal free convolution.

\begin{theorem}
  \label{MN}Let $\varrho (N)$, $N \in \mathbb{N}$, be a sequence of
  probability measures on $\hat{U} (N)$. Moreover we assume that there exist
  functions $\Psi, \Phi$ analytic in a neighborhood of $1$ such that for every
  $k \in \mathbb{N}$ fixed
  \begin{equation}
    \lim_{N \rightarrow \infty} N \left( \frac{1}{N} \log \text{} S_{\varrho
    (N)} (u_1, \ldots, u_k, 1^{N - k}) - \sum_{i = 1}^k \Psi (u_i) \right) =
    \sum_{i = 1}^k \Phi (u_i), \label{ev}
  \end{equation}
  where the convergence is uniform in a neighborhood of $1^k$. Then, the
  infinitesimal free cumulants of the $1
  / N$ correction $\lim_{N \rightarrow \infty} N \left( \mathbb{E}[m_{N, P
  \text{} P (q)} [\varrho (N)]] -\tmmathbf{\mu}^{(q)} \right)$ are given by
  \begin{equation}
    \kappa_{n, q} (\Psi, \Phi) = \frac{1}{(n - 1) !}  \left. \frac{\mathd^{n -
    1}}{\mathd u^{n - 1}} (e_q (u) \Psi' (e_q (u))) \right|_{u = 0} + \kappa_n
    (\beta (1 - q, 1 + q)) \text{ for every } n \in \mathbb{N}, \label{999}
  \end{equation}
  and
  \begin{equation}
    \kappa_{n, q}' (\Psi, \Phi) = \frac{1}{(n - 1) !}  \left. \frac{\mathd^{n
    - 1}}{\mathd u^{n - 1}} (e_q (u) \Phi' (e_q (u))) \right|_{u = 0} \noplus
    + \kappa_n \left( \delta \left(  \frac{q - 1}{2} \right) \right) \text{
    for every } n \in \mathbb{N}, \label{99}
  \end{equation}
  where $e_q (u) = (1 - q \text{} u)^{- 1 / q}$.
\end{theorem}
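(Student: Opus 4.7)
The plan is to mirror the proof of Theorem \ref{kolpa} but extract one more order in the asymptotic expansion. Specifically, I would apply the differential operator $\mathfrak{D}_k^{U(N),q}$ to the $q$-deformed Schur generating function $T^{(q)}_{\varrho(N)}$, evaluate at $u_1=\cdots=u_N=0$, and track both the leading $N^{k+1}$ term and the next $N^k$ term of the resulting expansion. The advantage of working with $\mathfrak{D}_k^{U(N),q}$ and $T^{(q)}_{\varrho(N)}$ rather than with $\mathcal{D}_k^{U(N),q}$ and $S_{\varrho(N)}$ (as in Theorem \ref{meis}) is that the resulting moment formula has exactly the non-crossing-partition shape needed to read off free cumulants, via Lemma 4 of \cite{B41}. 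Upgraded to the infinitesimal setting, the same observation should let us read off both $\kappa_{n,q}$ and $\kappa'_{n,q}$ at once from the two leading orders.

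The first step is to translate (\ref{ev}) into an expansion for $\frac{1}{N}\log T^{(q)}_{\varrho(N)}$. Using the definition of $T^{(q)}_{\varrho(N)}$ together with the Vandermonde prefactor, one obtains
\[ \frac{1}{N}\log T^{(q)}_{\varrho(N)}(u_1,\ldots,u_k,0^{N-k}) = \sum_{i=1}^k \widetilde\Psi(u_i) + \frac{1}{N}\sum_{i=1}^k \widetilde\Phi(u_i) + o(1/N),\]
with $\widetilde\Psi(u) = \Psi(e_q(u)) + \log\!\big(\tfrac{e_q(u)-1}{u}\big)$ and with $\widetilde\Phi(u)$ a sum of $\Phi(e_q(u))$ and an explicit universal Vandermonde correction. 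The second step is to apply $\mathfrak{D}_k^{U(N),q}$ exactly as in the proof of Theorem \ref{kolpa}. The leading $N^{k+1}$ term reproduces (\ref{19}) and gives the free cumulants (\ref{999}) via Lemma~4 of \cite{B41}. The subleading $N^k$ term receives three types of contribution: (i) the $\binom{k}{2}$-prefactored symmetrized sum obtained by lowering $n=k$ to $n=k-1$ in the analogue of (\ref{22}), as in Theorem \ref{meis}; (ii) the replacement of one of the $(k-m)$ first-order $\frac{1}{N}$-log-derivatives by its subleading $\frac{1}{N}\widetilde\Phi'$ piece; (iii) the appearance of a second-order derivative $\partial_i^2\!\big(\frac{1}{N}\log T^{(q)}_{\varrho(N)}\big)$ in place of two first derivatives. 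Summing these, the algebraic collapse used at the end of the proof of Theorem \ref{kolpa} should rearrange the result into the infinitesimal moment-cumulant form
\[ \sum_{\pi\in\mathrm{NC}(k)}\sum_{V\in\pi}\kappa'_{|V|,q}(\Psi,\Phi)\prod_{W\neq V}\kappa_{|W|,q}(\Psi,\Phi),\]
so that the uniqueness of the infinitesimal moment-cumulant inversion forces the identifications in (\ref{999})-(\ref{99}).

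The main obstacle is the bookkeeping in the subleading order and, above all, verifying that the universal (input-independent) part of $\widetilde\Phi$ together with the second-derivative contributions assembles exactly into $\kappa_n(\delta((q-1)/2))$, while the analogous universal part at leading order gives $\kappa_n(\beta(1-q,1+q))$. A clean way to check this without grinding through every partition is to specialize first to the trivial input $\varrho(N)=\delta_{0^N}$, for which $\Psi\equiv\Phi\equiv 0$: the discussion preceding the theorem already identifies the limit and its $1/N$ correction as $\beta(1-q,1+q)$ and $\delta((q-1)/2)$ respectively, which pins down the universal baseline. The $\Psi$- and $\Phi$-dependent contributions are then linear additions to this baseline, and their form $\tfrac{1}{(n-1)!}\tfrac{d^{n-1}}{du^{n-1}}(e_q(u)\Psi'(e_q(u)))\big|_{u=0}$ and $\tfrac{1}{(n-1)!}\tfrac{d^{n-1}}{du^{n-1}}(e_q(u)\Phi'(e_q(u)))\big|_{u=0}$ is transparent from the $\widetilde\Psi,\widetilde\Phi$ expressions above.
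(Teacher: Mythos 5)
Your overall strategy coincides with the paper's: apply $\mathfrak{D}_k^{U(N),q}$ to $T^{(q)}_{\varrho(N)}$, extract the $N^{k+1}$ and $N^k$ coefficients, and identify the (infinitesimal) free cumulants through the non-crossing-partition lemma of {\cite{B41}}. However, there is a genuine gap in the step where you translate (\ref{ev}) into an expansion for $\frac{1}{N}\log T^{(q)}_{\varrho(N)}$. Your ansatz
\[ \frac{1}{N}\log T^{(q)}_{\varrho(N)}(u_1,\ldots,u_k,0^{N-k}) = \sum_{i=1}^k \widetilde\Psi(u_i) + \frac{1}{N}\sum_{i=1}^k \widetilde\Phi(u_i) + o(1/N) \]
is false: writing out the prefactor in the definition of $T^{(q)}_{\varrho(N)}$ one gets
\[ \log T^{(q)}_{\varrho(N)}(u_1,\ldots,u_k,0^{N-k}) = \sum_{1\leq i<j\leq k}\log\frac{e_q(u_i)-e_q(u_j)}{u_i-u_j} + (N-k)\sum_{i=1}^k\log\frac{e_q(u_i)-1}{u_i} + \log S_{\varrho(N)}(e_q(u_1),\ldots,e_q(u_k),1^{N-k}), \]
so the order-$1/N$ part of $\frac1N\log T^{(q)}_{\varrho(N)}$ contains a genuinely non-additive pairwise term (besides a $k$-dependent additive piece). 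Consequently the mixed derivatives $\partial_1\partial_2\log T^{(q)}_{\varrho(N)}(0^N)$ do \emph{not} tend to $0$, unlike those of $\log S_{\varrho(N)}$, and they produce an extra contribution at order $N^k$ --- this is precisely the computation (\ref{ptn})--(\ref{Light}) in the paper, which yields a term of the form $\sum_m\binom{k}{m+1}\frac{1}{m!}\frac{\mathd^m}{\mathd u^m}\bigl(g_q(u)\,f_q(u)^{k-m-1}\bigr)\big|_{u=0}$ with $g_q$ universal but $f_q$ depending on $\Psi$. Because this missing term couples to $\Psi$, it is not recoverable from your baseline check at $\Psi\equiv\Phi\equiv 0$: that check only pins down the input-independent part of the answer, whereas here the error sits in the cross terms between the universal cumulants and the $\Psi$-dependent ones. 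Omitting it, the $N^k$ coefficient will not assemble into the claimed infinitesimal moment-cumulant form.

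Two smaller points. First, your contribution (i) does not arise the way you describe: unlike $\mathcal{D}_k^{U(N),q}$, the operator $\mathfrak{D}_k^{U(N),q}$ contains a single $\partial_i^k$ and has no lower-order summands to ``lower $n=k$ to $n=k-1$''; the analogous correction instead comes from the fact that $\mathfrak{D}_k^{U(N),q}$ computes the expectation of $\prod_{j=0}^{k-1}(\mathlambda_i(N)+N-i+jq)$ rather than of $(\mathlambda_i(N)+N-i)^k$, which at order $N^k$ costs $q\binom{k}{2}$ times the $(k-1)$-st quantity. Second, you do not list the multiplicity correction coming from the symmetrized sums appearing $m!\binom{N}{m+1}$ rather than $N^{m+1}/(m+1)!$ times (the paper's term preceding the second-derivative contribution); it is of the same nature as in Theorem \ref{meis} and would likely surface in careful bookkeeping, but it should be stated.
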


\begin{proof}
  By Lemma 5 of {\cite{B41}}, it suffices to show that for every $k \in
  \mathbb{N}$
  \[ \lim_{N \rightarrow \infty} N \left( \mathbb{E}_{\mathlambda (N)} \left[
     \int_{\mathbb{R}} t^k m_{N, P \text{} P (q)} [\mathlambda (N)] (d \text{}
     t) \right] - \right. \text{{\hspace{18em}}} \]
  \[ \text{{\hspace{7em}}} \left. \sum_{m = 0}^k \binom{k}{m} \frac{1}{(m + 1)
     !}  \left. \frac{\mathd^m}{\mathd u^m} \left( e_q (u) \Psi' (e_q (u)) +
     \frac{e_q (u)}{e_q (u) - 1} - \frac{1}{u} \right)^{k - m} \right|_{u = 0}
     \right) \]
  \[ = \sum_{m = 0}^{k - 1} \binom{k}{m + 1} \frac{1}{m!}  \left.
     \frac{\mathd^m}{\mathd u^m} \left( e_q (u) \Phi' (e_q (u)) - \frac{1 -
     q}{2} \right) \left( e_q (u) \Psi' (e_q (u)) + \frac{e_q (u)}{e_q (u) -
     1} - \frac{1}{u} \right)^{k - m - 1} \right|_{u = 0} . \]
  For notation simplicity we write $\Alpha_q (u) = \Psi (e_q (u))$, $\Beta_q
  (u) = \log \left( \frac{e_q (u) - 1}{u} \right)$ and $\Gamma_q (u) = \Phi
  (e_q (u))$. In order to get an expression for the left hand side of the
  above equality we use the same strategy as in the proof of Theorem
  \ref{meis}. As we explained there, applying $\mathfrak{D}_k^{U (N),
  q}$ to $T^{(q)}_{\varrho (N)}$ and setting $u_1 = \cdots = u_N = 0$, we get
  (\ref{21}). Starting from (\ref{21}), in order to compute the desired limit,
  we see that the sum
  \begin{equation}
    - q \frac{k (k - 1)}{2} \sum_{m = 0}^{k - 1} \binom{k - 1}{m} \frac{1}{(m
    + 1) !}  \left. \frac{\mathd^m}{\mathd u^m} \left( e_q (u) \Psi' (e_q (u))
    + \frac{e_q (u)}{e_q (u) - 1} - \frac{1}{u} \right)^{k - 1 - m} \right|_{u
    = 0}, \label{tsig}
  \end{equation}
  and the limit
  \[ \lim_{N \rightarrow \infty} \left( \sum_{m = 0}^k \binom{k}{m}
     \frac{N}{(m + 1) !} \partial_1^m \left( (1 - q \text{} u_1)^{k + 1}
     \left( \partial_1 \left( \frac{1}{N} \log \text{} T_{\varrho (N)}^{(q)}
     \right) \right)^{k - m} \right) (0^N) \right. \text{{\hspace{7em}}} \]
  \begin{equation}
    \text{{\hspace{10em}}} \left. - \sum_{m = 0}^k \binom{k}{m} \frac{N}{(m +
    1) !}  \left. \frac{\mathd^m}{\mathd u^m} \left( (1 - q \text{} u)^{k + 1}
    (\Alpha_q' (u) + \Beta_q' (u))^{k - m} \right) \right|_{u = 0} \right),
    \label{sper}
  \end{equation}
  will contribute. Due to (\ref{ev}) we have that for every $l \in \mathbb{N}$
  \[ \lim_{N \rightarrow \infty} \left( \partial_1^l \log \text{} T_{\varrho
     (N)}^{(q)} (0^N) - N \text{} \Alpha_q^{(l)} (0) - N \text{} \Beta_q^{(l)}
     (0) \right) = \Gamma_q^{(l)} (0) - \Beta_q^{(l)} (0), \]
  which implies that the above limit is equal to
  \begin{equation}
    \sum_{m = 0}^{k - 1} \binom{k}{m + 1} \frac{1}{m!}  \left.
    \frac{\mathd^m}{\mathd u^m} \left( \left( 1 - q \text{} u \right)^{k + 1}
    (\Gamma_q' (u) - \Beta_q' (u)) (\Alpha_q' (u) + \Beta_q' (u))^{k - m - 1}
    \right) \right|_{u = 0} \label{!}
  \end{equation}
  \begin{equation}
    = \sum_{m = 0}^{k - 1} \binom{k}{m + 1} \frac{1}{m!}  \left.
    \frac{\mathd^m}{\mathd u^m} \left( (1 - q \text{} u)^{m + 1} \varphi_q (u)
    (f_q (u) + q)^{k - m - 1} \right) \right|_{u = 0}, \label{spl}
  \end{equation}
  where $\varphi_q (u) = e_q (u) \Phi' (e_q (u)) - e_q (u) (e_q (u) - 1)^{- 1}
  + u^{- 1} - q$ and $f_q (u) = e_q (u) \Psi' (e_q (u)) + e_q (u) (e_q (u) -
  1)^{- 1} - u^{- 1}$. Using the Leibniz rule for the derivative and changing
  the order of summation we see that (\ref{spl}) is equal to
  \[ \sum_{l = 0}^{k - 1} \sum_{n = 0}^{k - l - 1} \sum_{m = l}^{k - n - 1}
     \frac{1}{l!} \binom{k}{l + 1} \binom{k - l - 1}{n} \binom{k - n - 1 -
     l}{m - l} q^n (- q)^{m - l} \left. \frac{\mathd^l}{\mathd u^l} (\varphi_q
     (u) f_q^{k - n - 1 - m} (u)) \right|_{u = 0} \]
  \begin{equation}
    = \sum_{m = 0}^{k - 1} \binom{k}{m + 1} \frac{1}{m!}  \left.
    \frac{\mathd^m}{\mathd u^m} (\varphi_q (u) f^{k - m - 1}_q (u)) \right|_{u
    = 0} . \label{PR}
  \end{equation}
  Note that the approach of the differential operator $\mathfrak{D}_k^{U (N),
  q}$ is in one sense harder for the determination of the $1 / N$ correction
  because in contrast to the function $S_{\varrho (N)}$, for the function
  $T_{\varrho (N)}^{(q)}$ it does not necessarily hold that the derivatives
  $\partial_1^l \partial_2^k \log \text{} T_{\varrho (N)}^{(q)} (0^N)$
  converge to $0$ as $N \rightarrow \infty$. Based on results of {\cite{B41}}
  (see Theorem 15), we know exactly how such derivatives contribute to the
  limit that we try to compute. They do by the limit
  \begin{equation}
    - \lim_{N \rightarrow \infty} \sum_{m = 1}^{k - 1} \binom{k}{m}
    \frac{N}{(m + 1) !} [(\partial_1 + \partial_2)^m - \partial_1^m] \left(
    \left( 1 - q \text{} u_1 \right)^{k + 1} \left( \partial_1 \left(
    \frac{1}{N} \log \text{} T_{\varrho (N)}^{(q)} \right) \right)^{k - m}
    \right) (0^N) . \label{ptn}
  \end{equation}
  In order to compute this limit note that by the definition of $T_{\varrho
  (N)}^{(q)}$ we have that
  \[ \partial_1 \log \text{} T_{\varrho (N)}^{(q)} (u, u, 0^{N - 2}) - N
     \text{} \Beta_q' (u)  - N \text{} \Alpha_q' (u) = \text{ {\hspace{17em}}}
  \]
  \[ \text{{\hspace{9em}}} e_q' (u) \left( \partial_1 \log \text{} S_{\varrho
     (N)} (e_q (u), e_q (u), 1^{N - 2}) - N \Psi' (e_q (u)) \right) +
     \frac{\Beta_q' (0)}{1 - q \text{} u} - 2 \Beta_q' (u), \]
  which implies that
  \[ \lim_{N \rightarrow \infty} \sum_{m = 1}^{k - 1} \binom{k}{m} \frac{N}{(m
     + 1) !} \left( \left. \frac{\mathd^m}{\mathd u^m} \left( 1 - q \text{} u
     \right)^{k + 1} \left( \partial_1 \left( \frac{1}{N} \log \text{}
     T^{(q)}_{\varrho (N)} \right) \right)^{k - m} (u, u, 0^{N - 2})
     \right|_{u = 0} \text{ {\hspace{4em}}} \right. \]
  \[ \text{{\hspace{21em}}} - \left. \left. \frac{\mathd^m}{\mathd u^m} \left(
     \left( 1 - q \text{} u \right)^{k + 1} (\Alpha_q' (u) + \Beta_q' (u))^{k
     - m} \right) \right|_{u = 0} \right) \]
  \[ = \sum_{m = 1}^{k - 1} \binom{k}{m + 1} \frac{1}{m!}  \left.
     \frac{\mathd^m}{\mathd u^m} \left( \left( 1 - q \text{} u \right)^{k + 1}
     \left( \Gamma_q' (u) + \frac{\Beta_q' (0)}{1 - q \text{} u} - 2 \Beta_q'
     (u) \right) (\Alpha_q' (u) + \Beta_q' (u))^{k - m - 1} \right) \right|_{u
     = 0} . \]
  Therefore, taking into account the computation that we have already done for
  (\ref{sper}), we deduce that (\ref{ptn}) is equal to
  \begin{equation}
    \sum_{m = 0}^{k - 1} \binom{k}{m + 1} \frac{1}{m!}  \left.
    \frac{\mathd^m}{\mathd u^m} \left( \left( 1 - q \text{} u \right)^{k + 1}
    \left( \Beta_q' (u) - \frac{\Beta'_q (0)}{1 - q \text{} u} \right)
    (\Alpha_q' (u) + \Beta_q' (u))^{k - m - 1} \right) \right|_{u = 0} .
    \label{Ypo}
  \end{equation}
  A computation similar to the one that we did in order to show that (\ref{!})
  is equal to (\ref{PR}) implies that (\ref{Ypo}) is equal to
  \[ \sum_{m = 0}^{k - 1} \binom{k}{m + 1} \frac{1}{m!} 
     \frac{\mathd^m}{\mathd u^m} \left( \left( \frac{e_q (u)}{e_q (u) - 1} -
     \frac{1}{u} + q - \Beta_q' (0) \right) \right. \text{{\hspace{15em}}} \]
  \begin{equation}
    \text{{\hspace{15em}}} \times \left. \left. \left( e_q (u) \Psi' (e_q (u))
    + \frac{e_q (u)}{e_q (u) - 1} - \frac{1}{u} \right)^{k - m - 1} \right)
    \right|_{u = 0} . \label{Light}
  \end{equation}
  Finally, the last thing is to examine the convergence of the sequence
  $(M_{1, k, N}^{(q)} (\Psi))_{N \in \mathbb{N}}$ of the expansion that we
  mentioned in the proof of Theorem \ref{kolpa}. On the one hand, the limits
  (\ref{fwta}) will play a role for $M_{1, k, N}^{(q)} (\Psi)$ because they
  appear $m! \binom{N}{m + 1}$ times in $\mathfrak{D}_k^{U (N), q} T_{\varrho
  (N)}^{(q)} \longdownminus_{u_1 = \cdots = u_N = 0}$. This fact in
  combination with assumption (\ref{ev}) implies that the contribution of the
  limits (\ref{fwta}) to $\lim_{N \rightarrow \infty} M_{1, k, N}^{(q)}
  (\Psi)$ is
  \begin{equation}
    - \frac{1}{2} \sum_{m = 1}^k \binom{k}{m} \frac{1}{(m - 1) !}  \left.
    \frac{\mathd^m}{\mathd u^m} \left( (1 - q \text{} u)^{k + 1} (\Alpha_q'
    (u) + \Beta_q' (u))^{k - m} \right) \right|_{u = 0} . \label{dnan}
  \end{equation}
  Note that there are also symmetrized sums of another form, that emerge from
  the relation (\ref{22}) and contribute to $M_{1, k, N}^{(q)} (\Psi)$ as $N
  \rightarrow \infty$. These have the form
  \[ \frac{k!N^{k - m - 1}}{2 m! (k - m - 2) !}  \left( \frac{(1 - q \text{}
     u_{b_0})^{k + 1} \partial_{b_0}^2 \left( \frac{1}{N} \log \text{}
     T^{(q)}_{\varrho (N)} \right) \left( \partial_{b_0} \left( \frac{1}{N}
     \log \text{} T^{(q)}_{\varrho (N)} \right) \right)^{k - m - 2}}{(u_{b_0}
     - u_{b_1}) (u_{b_0} - u_{b_2}) \ldots (u_{b_0} - u_{b_m})} \right. +
     \text{{\hspace{10em}}} \]
  \[ \text{{\hspace{12em}}} \ldots \left. + \frac{(1 - q \text{} u_{b_m})^{k +
     1} \partial_{b_m}^2 \left( \frac{1}{N} \log \text{} T^{(q)}_{\varrho (N)}
     \right) \left( \partial_{b_m} \left( \frac{1}{N} \log \text{}
     T^{(q)}_{\varrho (N)} \right) \right)^{k - m - 2}}{(u_{b_m} - u_{b_0})
     (u_{b_m} - u_{b_1}) \ldots (u_{b_m} - u_{b_{m - 1}})} \right) . \]
  The same arguments that we used for the symmetrized sums (\ref{fwta})
  combined with the assumption (\ref{ev}) are sufficient in order to deduce
  that the above symmetrized sums for $u_1 = \cdots = u_N = 0$ will affect
  $\lim_{N \rightarrow \infty} M_{1, k, N}^{(q)} (\Psi)$ by the sum
  \begin{equation}
    \sum_{m = 0}^{k - 2} \frac{k!}{2 m! (m + 1) ! (k - m - 2) !}  \left.
    \frac{\mathd^m}{\mathd u^m} \left( (1 - q \text{} u)^{k + 1} (\Alpha_q''
    (u) + \Beta_q'' (u)) (\Alpha_q' (u) + \Beta_q' (u))^{k - m - 2} \right)
    \right|_{u = 0} . \label{1!.}
  \end{equation}
  Note that $\lim_{N \rightarrow \infty} M_{1, k, N}^{(q)} (\Psi)$ is equal to
  the sum of (\ref{1!.}) and (\ref{dnan}). Taking the sum of (\ref{tsig}),
  (\ref{PR}), (\ref{Light}) and this limit we see that the claim holds.
\end{proof}

\begin{remark}
  In the setting of Theorem \ref{MN}, as we have already mentioned, the $1 /
  N$ correction can be considered as a formal derivative with respect to ``$t
  = 1 / N$'' at $0$. In that sense the differentiation procedure that we
  described in the above theorem affects the part of the limiting measure that
  depends on the function $\Psi$, in terms of the free cumulants, by replacing
  the function $\Psi$ with $\Phi$ for the infinitesimal free cumulants. For
  the remaining part of the limiting measure, i.e. the beta distribution,
  although it acts as a constant its derivative is not zero (meaning $\delta
  (0)$) but $\delta \left( \frac{q - 1}{2} \right)$. This changes for the
  intermediate limit regime that we described in Remark \ref{vli}. In that
  case the correction to the limit plays again the role of a derivative at $0$
  but with respect to ``$t = 1 / N^{\varepsilon}$'' for $0 < \varepsilon < 1$.
  Consequently the $1 / N^{\varepsilon}$ correction will be different because
  the derivative of $\beta (1 - q, 1 + q)$ will be $\delta (0)$, meaning that
  the free cumulants of $\delta \left( \frac{q - 1}{2} \right)$ will not
  contribute to the infinitesimal free cumulants (\ref{99}) anymore while the
  formula for the free cumulants (\ref{999}) will remain the same. Under
  slight modifications the same thing happens for the case $\varepsilon = 1$
  as well. One of the reasons is that the free cumulants of the Dirac measure
  exist in (\ref{99}) due to the definition of the $q$-deformed
  Perelomov-Popov measure and not due to the assumption (\ref{ev}). More
  precisely shifting $m_{N, P \text{} P (q)} [\mathlambda (N)]$ by $\frac{1 -
  q}{2 N}$ to the right, for every $N \in \mathbb{N}$, the limit regime
  (\ref{35}) leads to the same limiting measure with free cumulants
  (\ref{999}), for every $0 \leq \varepsilon \leq 1$, and to the same $1 /
  N^{\varepsilon}$ correction, for every $0 < \varepsilon \leq 1$, for which
  the derivative of $\beta (1 - q, 1 + q)$ is equal to zero.
\end{remark}

\subsection{Infinitesimal distributions and extreme characters}

Concluding this section, we give some concrete examples of (infinitesimal)
distributions that arise as limits, in the framework of Theorem \ref{MN} and
Theorem \ref{meis}, when our Schur generating functions correspond to
particular extreme characters. These examples also appear very naturally in
random matrix theory and in some cases they describe very interesting
phenomena for the spectrum, related to outlier eigenvalues and phase
transitions {\cite{B43}}, {\cite{B41}}, {\cite{B12}}.

Let $\varrho (N)$ be probability measures on $\hat{U} (N)$, $N \in
\mathbb{N}$, such that
\[ S_{\varrho (N)} (u_1, \ldots, u_N) =\tmmathbf{\chi}_N (\tmop{diag} (u_1,
   \ldots, u_N, 1, 1 \ldots)) = \prod_{i = 1}^N \exp (N (u_i - 1) + (u_i -
   1)), \]
for every $N \in \mathbb{N}$ and $u_1, \ldots, u_N \in \mathbb{C}$ on the unit
circle. Then the moments of the limit of $m_{N, P \text{} P (- 1)} [\varrho
(N)]$ and its $1 / N$ correction are given by (\ref{moments}) and (\ref{30})
respectively, for $\Psi (u) = \Phi (u) = u - 1$. This implies that the limit
is a semicircle distribution and the $1 / N$ correction is given by the signed
measure
\[ \tmmathbf{\mu}' (d \text{} x) = \frac{1}{2 \mathpi} \tmmathbf{1}_{(0, 4)}
   (x)  \frac{x^2 - 4 x + 2}{\sqrt{(4 - x) x}} d \text{} x. \]
Note that the ``continuous'' analogue of this example, on the level of random
matrices, is given if we consider the limit of the empirical spectral
distribution and its $1 / N$ correction, for the random matrix $A_N + 2
\text{} I_N + N^{- 1 / 2} B_N$, where $I_N$ is the identity matrix of size $N$
and $A_N, B_N$ are independent GUE matrices of size $N$.

An alternative way to get a quite similar outcome, but for the limit of
$1$-Perelomov-Popov measures is by considering probability measures $\rho (N)$
on $\hat{U} (N)$, $N \in \mathbb{N}$, such that
\[ S_{\rho (N)} (u_1, \ldots, u_N) = \prod_{i = 1}^N \exp (N (u_i^{- 1} - 1) +
   (u_i^{- 1} - 1)), \]
for every $u_1, \ldots, u_N$. Then $m_{N, P \text{} P (1)} [\rho (N)]$
converges to the standard semicircle distribution, shifted by 1 and the $1 /
N$ correction is given by
\[ \tilde{\tmmathbf{\mu}}' (d \text{} x) = \frac{1}{2 \mathpi}
   \tmmathbf{1}_{(- 3, 1)} (x)  \frac{x^2 + x - 2}{\sqrt{(1 - x) (3 + x)}} d
   \text{} x. \]
In the sense that we explained before, this corresponds to the random matrix
$A_N - (1 + N^{- 1}) I_N + N^{- 1 / 2} B_N$.

Infinitesimal limits of the averaged empirical distribution attracted
attention in the context of random matrices because they provide information
for outlier eigenvalues {\cite{B43}}, {\cite{B41}}, {\cite{B44}},
{\cite{B12}}. Similar applications were shown in {\cite{B41}}, in the context
of random domino tiling and asymptotic representation theory, studying the
infinitesimal limit of $m_{N, P \text{} P (0)} [\varrho (N)]$. For certain
sequences of measures on $\hat{U} (N)$, the infinitesimal limits of
$q$-deformed Perelomov-Popov measures, for $q = - 1, 1$, coincide with the
infinitesimal limits of finite rank perturbations of GUE or Wishart matrices.
For example the $1 / N$ correction of $m_{N, P \text{} P (- 1)}
[\varrho^{\gamma, \alpha} (N)]$, where
\[ S_{\varrho^{\gamma, \alpha} (N)} (u_1, \ldots, u_N) = \prod_{i = 1}^N \exp
   (\gamma N (u_i^{- 1} - 1)) \frac{1}{1 - \alpha (u_i - 1)}, \]
is given by a signed measure
\[ \tmmathbf{\mu}_{\gamma, \alpha}' (d \text{} x) =\tmmathbf{1}_{\alpha + 1
   \geq \sqrt{\gamma}}  \frac{\alpha}{\alpha + 1} \delta \left(
   \frac{\gamma}{\alpha + 1} - \gamma + \alpha + 1 \right)
   \text{{\hspace{14em}}} \]
\[ \text{{\hspace{4em}}} + \frac{1}{2 \mathpi} \tmmathbf{1}_{(- 2
   \sqrt{\gamma} - \gamma, 2 \sqrt{\gamma} - \gamma)} (x)  \frac{\alpha (x +
   \gamma) - 2 \alpha (\alpha + 1)}{(\alpha + 1)^2 + \gamma - (\alpha + 1) (x
   + \gamma)} \frac{d \text{} x}{\sqrt{(2 \sqrt{\gamma} - \gamma - x) (x + 2
   \sqrt{\gamma} + \gamma)}} . \]
The above signed measure is strongly related to the $1 / N$ correction of
$1$-rank perturbation of a Gaussian matrix. From the formula for
$\tmmathbf{\mu}_{\gamma, \alpha}'$ we extract information not only for the
outlier, but also for the weight of $N^{- 1} (\mathlambda_1 (N) + N - 1)$.

\begin{example}
  \label{mainexample}We conclude with a general example where we deal with the
  probability measures $\varrho^{\gamma} (N)$ that we defined in subsection
  \ref{Y}, but now we consider the limiting measure of Theorem \ref{kolpa} and
  its $1 / N$ correction for arbitrary $- 1 \leq q \leq 1$. Comparing the two
  (equivalent) formulas that we have for the moments, the formula
  (\ref{moments}) can be more useful in order to clarify the probability
  density function of the limiting measure $\tmmathbf{\mu}_{\gamma}^{(q)}$.
  This is so because using integral representations for the derivatives that
  appear in the formula for the moments, we can end up to the density. For
  this example the $k$-th moment of the limiting measure
  $\tmmathbf{\mu}_{\gamma}^{(q)}$ is equal to
  \[ \sum_{m = 0}^k \frac{k!}{(m + 1) ! (k - m) !}  \frac{\gamma^{k - m}}{2
     \mathpi i} \oint_{|z| = \frac{1}{\sqrt{\gamma}}} \frac{(z + 1)^{k -
     q}}{z^{m + 1}} d \text{} z \]
  \[ = \frac{1}{2 \mathpi i (k + 1)} \oint_{|z| = \frac{1}{\sqrt{\gamma}}}
     \frac{1}{(z + 1)^{q + 1}} \left( \left( \gamma \text{} z + \frac{1}{z} +
     \gamma + 1 \right)^{k + 1} - (\gamma \text{} z + \gamma)^{k + 1} \right)
     d \text{} z. \]
  Doing a typical change of variables in the above integral, we deduce that
  $\tmmathbf{\mu}_{\gamma}^{(q)} (d \text{} t) = f_{\gamma, q} (t) d \text{}
  t$ for $\gamma > 1$ and $- 1 \leq q \leq 1$, where
  \[ f_{\gamma, q} (t) = \frac{\gamma^{q / 2}}{q \text{} \mathpi t^{q / 2}}
     \tmmathbf{1}_{[(\sqrt{\gamma} - 1)^2, (\sqrt{\gamma} + 1)^2]} (t) \sin
     \left( q \arctan \left( \frac{\sqrt{(\gamma + 1 + 2 \sqrt{\gamma} - t) (t
     - \gamma - 1 + 2 \sqrt{\gamma})}}{t + \gamma - 1} \right) \right) . \]
  On the other hand, if $\gamma < 1$ we have that
  $\tmmathbf{\mu}_{\gamma}^{(q)} (d \text{} t) = \widetilde{f_{\gamma, q}} (t)
  d \text{} t$, for $- 1 \leq q < 1$ where
  \[ \widetilde{f_{\gamma, q}} (t) = \left\{\begin{array}{l}
       (\mathpi q)^{- 1} (1 - \sqrt{\gamma})^{- q} \gamma^{q / 2} \sin
       (\mathpi q) \text{, \quad} t \in (0, (1 - \sqrt{\gamma})^2)\\
       \frac{\gamma^{q / 2}}{q \mathpi t^{q / 2}} \sin \left( q \arctan \left(
       \frac{\sqrt{(\gamma + 1 + 2 \sqrt{\gamma} - t) (t - \gamma - 1 + 2
       \sqrt{\gamma})}}{t + \gamma - 1} \right) + q \mathpi \right) \text{,
       \quad} t \in ((1 - \sqrt{\gamma})^2, 1 - \gamma)\\
       \frac{\gamma^{q / 2}}{q \mathpi t^{q / 2}} \sin \left( q \arctan \left(
       \frac{\sqrt{(\gamma + 1 + 2 \sqrt{\gamma} - t) (t - \gamma - 1 + 2
       \sqrt{\gamma})}}{t + \gamma - 1} \right) \right) \text{, \quad} t \in
       (1 - \gamma, (1 + \sqrt{\gamma})^2)\\
       0 \text{, \quad otherwise} .
     \end{array}\right. \]
  Moreover, $\tmmathbf{\mu}_{\gamma}^{(- 1)} (d \text{} t) =
  \widetilde{f_{\gamma, - 1}} (t) d \text{} t + (1 - \gamma) \delta (0)$. Note
  that the formulas of $f_{\gamma, 0}$ and $\widetilde{f_{\gamma, 0}}$
  coincide with those written in subsection \ref{Y}.
  
  Moreover the $1 / N$ correction of this measure can be determined completely
  applying an integration by parts since its $k$-th moment is equal to
  $\frac{q - 1}{2} k \int_{\mathbb{R}} t^{k - 1} \tmmathbf{\mu}_{\gamma}^{(q)}
  (d \text{} t)$.
  
  \begin{figure}[p]
    \centering
    \raisebox{0.0\height}{\includegraphics[width=16.9404433949889cm,height=21.2189999999cm]{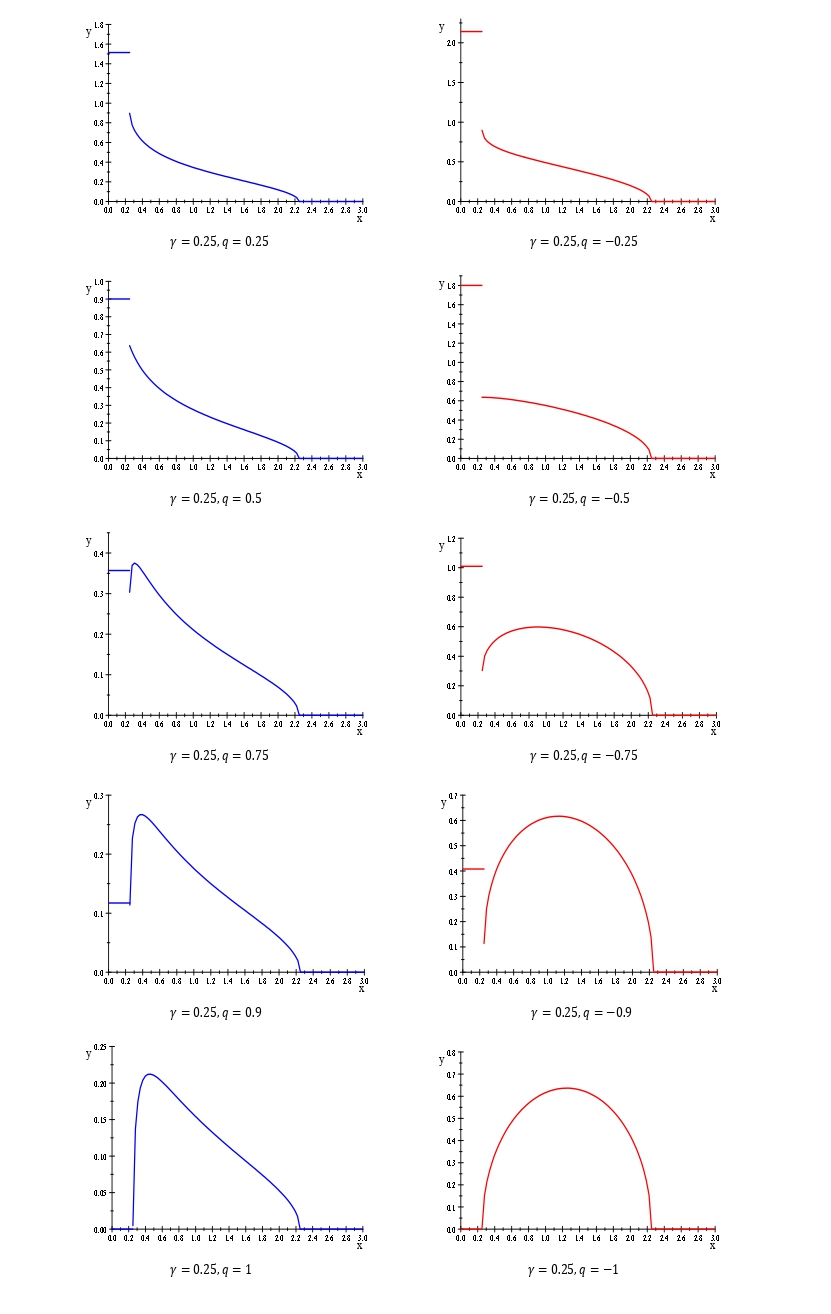}}
    \caption{Graph of $y = \widetilde{f_{\gamma, q}} (x)$ for $\gamma = 0.25$
    and different values of $q \in [- 1, 1]$.}
  \end{figure}
\end{example}

\section{Non-asymptotic relations and Markov-Krein correspondence}

In this section we focus on some non-asymptotic relations regarding the
limiting measures $\tmmathbf{\mu}^{(q)}$ of Theorem \ref{12} and Theorem
\ref{kolpa}. For the time being, we have shown that measures of this form
arise asymptotically as limits of the $q$-deformed Perelomov-Popov measures of
certain random signatures. The connection of Perelomov-Popov measures with
symmetric polynomials provides transforms that allow to express the moments of
$\tmmathbf{\mu}^{(q)}$ in terms of the moments of $\tmmathbf{\mu}^{(q')}$. In
the following we denote by $G_{\tmmathbf{\mu}}$ the Stieltjes transform of a
probability measure $\tmmathbf{\mu}$ on $\mathbb{R}$.

\begin{theorem}
  \label{ata}Let $\varrho (N)$ be a sequence of probability measures on
  $\hat{U} (N)$, $N \in \mathbb{N}$, such that $m_{N, P \text{} P (0)}
  [\varrho (N)]$ converges as $N \rightarrow \infty$ in probability, in the
  sense of moments to a deterministic measure $\tmmathbf{\mu}^{(0)}$ with
  moments $(\tmmathbf{\mu}^{(0)}_k)_{k \in \mathbb{N}}$. Then for every $q \in
  [- 1, 1]$ the probability measures $m_{N, P \text{} P (q)} [\varrho (N)]$
  converge as $N \rightarrow \infty$ in probability, in the sense of moments
  to deterministic probability measures $\tmmathbf{\mu}^{(q)}$ with moments
  $(\tmmathbf{\mu}_k^{(q)})_{k \in \mathbb{N}}$. The moments of
  $\tmmathbf{\mu}^{(q)}$ are related to the moments of $\tmmathbf{\mu}^{(0)}$
  via
  \begin{equation}
    \exp \left( - q \sum_{k = 0}^{\infty} \tmmathbf{\mu}_k^{(0)} z^{k + 1}
    \right) = 1 - q \sum_{k = 0}^{\infty} \tmmathbf{\mu}_k^{(q)} z^{k + 1} .
    \label{MK}
  \end{equation}
\end{theorem}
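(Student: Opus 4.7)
The plan is to derive (\ref{MK}) directly from Proposition \ref{Prop9} by rescaling variables and passing to the large-$N$ limit. Equation (\ref{7}) is equivalent to the closed product formula
\begin{equation*}
  1 - q\sum_{k\geq 0} m_k^{(q)}(x_1,\ldots,x_N)\, z^{k+1} \;=\; \prod_{i=1}^N \frac{1-(x_i+q)z}{1-x_iz},
\end{equation*}
valid for every $q\in\mathbb{R}$ (and trivial at $q=0$). First I would specialize $x_i = \mathlambda_i(N) + N - i$ and replace $z$ by $z/N$: the LHS becomes $1 - q\sum_{k\geq 0}\tmmathbf{\mu}_{N,k}^{(q)}z^{k+1}$, where $\tmmathbf{\mu}_{N,k}^{(q)} := \int t^k\, m_{N, P \text{} P (q)}[\mathlambda(N)](\mathd t)$, while the RHS becomes the rational function $\prod_i \frac{1-(x_i+q)z/N}{1-x_iz/N}$.

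Next I would take logarithms of this product, expand $(1-x_iz/N)^{-1}=\sum_{j\geq 0}(x_iz/N)^j$ geometrically, and use $\sum_i x_i^j/N^{j+1} = \tmmathbf{\mu}_{N,j}^{(0)}$ to obtain
\begin{equation*}
  \log\prod_{i=1}^N \frac{1-(x_i+q)z/N}{1-x_iz/N} = -q\sum_{k\geq 0}\tmmathbf{\mu}_{N,k}^{(0)}\, z^{k+1} - \sum_{l\geq 2}\frac{q^l}{l\, N^{l-1}}\sum_{j\geq 0}\binom{j+l-1}{l-1}\tmmathbf{\mu}_{N,j}^{(0)}\, z^{j+l}.
\end{equation*}
For each fixed $k$, only finitely many terms contribute to the coefficient of $z^{k+1}$ on the right-hand side, namely those with $l+j=k+1$, and every contribution with $l\geq 2$ carries an explicit prefactor $N^{1-l}\leq N^{-1}$. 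Exponentiating and equating with the rescaled identity, $\tmmathbf{\mu}_{N,k}^{(q)}$ is thereby expressed as a polynomial in $\tmmathbf{\mu}_{N,0}^{(0)},\ldots,\tmmathbf{\mu}_{N,k}^{(0)}$ whose coefficients depend on $N$ only through a vanishing $O(1/N)$ correction.

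By hypothesis $\tmmathbf{\mu}_{N,k}^{(0)}\to \tmmathbf{\mu}_k^{(0)}$ in probability for every $k$; the continuous mapping theorem, together with tightness of these sequences, then yields convergence in probability of $\tmmathbf{\mu}_{N,k}^{(q)}$ to a deterministic limit $\tmmathbf{\mu}_k^{(q)}$ satisfying
\begin{equation*}
  1 - q\sum_{k\geq 0}\tmmathbf{\mu}_k^{(q)}z^{k+1} = \exp\!\Bigl(-q\sum_{k\geq 0}\tmmathbf{\mu}_k^{(0)}z^{k+1}\Bigr),
\end{equation*}
which is precisely (\ref{MK}). Since each $m_{N, P \text{} P (q)}[\mathlambda(N)]$ is a genuine probability measure for $q\in[-1,1]$, the limiting moment sequence $(\tmmathbf{\mu}_k^{(q)})_k$ determines a probability measure $\tmmathbf{\mu}^{(q)}$ via standard moment-problem arguments, and convergence in the sense of moments follows. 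The main obstacle is verifying the negligibility of the $l\geq 2$ tail of the log expansion at each coefficient order; this, however, is mechanical because finitely many terms contribute to each fixed power of $z$ and every such term is damped by $N^{1-l}\leq N^{-1}$, while convergence in probability of the $\tmmathbf{\mu}_{N,j}^{(0)}$'s provides the required tightness of the prefactors.
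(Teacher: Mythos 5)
Your proof is correct and takes exactly the route the paper intends: its entire proof of Theorem \ref{ata} is the one line ``The claim is a corollary of Proposition \ref{Prop9},'' and your argument simply spells out that corollary --- rewriting (\ref{7}) as the product identity $1-q\sum_k m_k^{(q)}z^{k+1}=\prod_i\frac{1-(x_i+q)z}{1-x_iz}$, rescaling $z\mapsto z/N$, and checking that the $l\geq 2$ terms of the logarithm are $O(N^{1-l})$ coefficientwise. The only point treated as lightly as in the paper is why the limiting moment sequence determines a (unique) probability measure, but that is no weaker than the original.
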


\begin{proof}
  The claim is a corollary of Proposition \ref{Prop9}.
\end{proof}

\begin{remark}
  In the case that we have probability measures $\varrho (N)$ such that the
  conditions of Theorem \ref{12} are satisfied, assuming that the limiting
  measures have compact supports, the precise formulas for their
  $R$-transforms can also lead to (\ref{MK}) which is basically a relation
  between Stieltjes transforms. To be more precise, the formula for
  $R_{\tmmathbf{\mu}^{(q)}}$,
  \[ R_{\tmmathbf{\mu}^{(q)}} (z) = e_q (z) \Psi' (e_q (z)) + \frac{e_q
     (z)}{e_q (z) - 1} - \frac{1}{z} \text{, \quad with } e_q (z) = (1 - q
     \text{} z)^{- 1 / q}, \]
  implies that
  \[ R_{\tmmathbf{\mu}^{(q)}} \left( \frac{1 - \exp (-
     qG_{\tmmathbf{\mu}^{(0)}} (z))}{q} \right) + \frac{q}{1 - \exp (-
     qG_{\tmmathbf{\mu}^{(0)}} (z))} = R_{\tmmathbf{\mu}^{(0)}}
     (G_{\tmmathbf{\mu}^{(0)}} (z)) + \frac{1}{G_{\tmmathbf{\mu}^{(0)}} (z)} =
     z. \]
  In the previous equality, we used that for an arbitrary probability measure
  $\tmmathbf{\nu}$ on $\mathbb{R}$ with compact support, $z \mapsto
  R_{\tmmathbf{\nu}} (z) + \frac{1}{z}$ is the compositional inverse of
  $G_{\tmmathbf{\nu}}$. Therefore we must have that $1 - \exp (-
  qG_{\tmmathbf{\mu}^{(0)}} (z)) = qG_{\tmmathbf{\mu}^{(q)}} (z)$ or
  equivalently (\ref{MK}).
\end{remark}

\begin{remark}
  The non-asymptotic relation (\ref{infinitesimalSt}), in Theorem
  \ref{infinitesimaltransform} emerges directly from the results of Theorem
  \ref{MN} based on arguments coming from infinitesimal free probability.
  Before we make it clear, note also that for a deterministic sequence
  $\mathlambda (N) \in \hat{U} (N)$ such that $m_{N, P \text{} P (q)}
  [\mathlambda (N)]$ has an infinitesimal limit,
  (\ref{infinitesimaltransform}) can be deduced by Proposition \ref{Prop9}. We
  can deduce the same for the $q$-deformed Perelomov-Popov measures of Theorem
  \ref{MN}, using the standard functional relations between the infinitesimal
  Stieltjes transform, the infinitesimal $R$-transform and the Stieltjes
  transform (of a non-commutative random variable) {\cite{B38}}, combined with
  the formulas that we have proved for the (infinitesimal) free cumulants.
\end{remark}

A natural question to ask is what kind of probability measures are related
through (\ref{MK}). For $q = - 1$, the relation (\ref{MK}) gives the well
known Markov-Krein correspondence {\cite{B35}}, {\cite{B23}}. This is a
bijection between measures on $\mathbb{R}$, with connections to asymptotic
representation theory of the symmetric groups {\cite{B18}} and the Hausdorff
moment problem {\cite{B27}}. For $q = 1$, this is a slight modification of the
Markov-Krein correspondence, firstly introduced in {\cite{B1}} and it can be
derived immediately from the original one. We recall that the Markov-Krein
correspondence states that for every probability measure $\tmmathbf{\mu}$ with
compact support on $\mathbb{R}$, which is absolutely continuous with respect
to the Lebesque measure and its probability density function is bounded by
$1$, there exists a unique probability measure $\tmmathbf{\nu}$, with compact
support on $\mathbb{R}$ such that
\begin{equation}
  \exp \left( \sum_{k = 0}^{\infty} \tmmathbf{\mu}_k z^{k + 1} \right) = 1 +
  \sum_{k = 0}^{\infty} \tmmathbf{\nu}_k z^{k + 1}, \label{ilosp}
\end{equation}
where $(\tmmathbf{\mu}_k)_{k \in \mathbb{N}}, (\tmmathbf{\nu}_k)_{k \in
\mathbb{N}}$ are the moments of $\tmmathbf{\mu}, \tmmathbf{\nu}$ respectively.
It is known (see in {\cite{B23}}) that the map that sends $\tmmathbf{\mu}$ to
$\tmmathbf{\nu}$ is a bijection. We denote it by $M \text{} K_{(- 1)}$.

\begin{corollary}
  Let $\tmmathbf{\mu}$ be a probability measure on $\mathbb{R}$ with compact
  support, which is absolutely continuous with respect to the Lebesque measure
  and its probability density function is bounded by $1 / |q|$. Then, there
  exists a unique probability measure $\tmmathbf{\nu}$ with compact support on
  $\mathbb{R}$ such that
  \[ \exp \left( - q \sum_{k = 0}^{\infty} \tmmathbf{\mu}_k z^{k + 1} \right)
     = 1 - q \sum_{k = 0}^{\infty} \tmmathbf{\nu}_k z^{k + 1}, \]
  where $(\tmmathbf{\mu}_k)_{k \in \mathbb{N}}, (\tmmathbf{\nu}_k)_{k \in
  \mathbb{N}}$ are the moments of $\tmmathbf{\mu}, \tmmathbf{\nu}$
  respectively. Moreover the map $M \text{} K_{(q)}$ that sends
  $\tmmathbf{\mu}$ to $\tmmathbf{\nu}$ is a bijection.
\end{corollary}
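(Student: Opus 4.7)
The plan is to reduce the statement to the classical Markov--Krein correspondence $MK_{(-1)}$ (recalled just before the statement) via an affine rescaling of the real line. For $q \neq 0$, let $T_c$ denote the pushforward of measures under the dilation $x \mapsto cx$, and set $\tilde{\tmmathbf{\mu}} := T_{-1/q}(\tmmathbf{\mu})$. The point of this rescaling is that if $\tmmathbf{\mu}$ has compactly supported density $f$ bounded by $1/|q|$, then $\tilde{\tmmathbf{\mu}}$ is again a compactly supported probability measure, with density $\tilde{f}(y) = |q|\, f(-qy)$ satisfying $\tilde{f} \leq 1$. This places $\tilde{\tmmathbf{\mu}}$ precisely in the domain of $MK_{(-1)}$, which is exactly why the threshold $1/|q|$ appears in the hypothesis.

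Next, applying $MK_{(-1)}$ yields a unique compactly supported probability measure $\tilde{\tmmathbf{\nu}}$ with $\exp\bigl(\sum_{k \geq 0} \tilde{\tmmathbf{\mu}}_k w^{k+1}\bigr) = 1 + \sum_{k \geq 0} \tilde{\tmmathbf{\nu}}_k w^{k+1}$. I would then define $\tmmathbf{\nu} := T_{-q}(\tilde{\tmmathbf{\nu}})$ and verify the target identity by substituting $w = -qz$ in the classical one: the relations $\tilde{\tmmathbf{\mu}}_k = (-1/q)^k \tmmathbf{\mu}_k$ and $\tilde{\tmmathbf{\nu}}_k = (-1/q)^k \tmmathbf{\nu}_k$ together with $w^{k+1}|_{w = -qz} = (-q)^{k+1} z^{k+1}$ combine to produce a clean factor of $-q$ on each side, giving exactly $\exp\bigl(-q \sum_k \tmmathbf{\mu}_k z^{k+1}\bigr) = 1 - q \sum_k \tmmathbf{\nu}_k z^{k+1}$, as required. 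Compact support of $\tmmathbf{\nu}$ is preserved by $T_{-q}$, so $\tmmathbf{\nu}$ lies in the desired class.

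Uniqueness is then immediate: the identity forces $\sum_k \tmmathbf{\nu}_k z^{k+1} = q^{-1}\bigl(1 - \exp(-q \sum_k \tmmathbf{\mu}_k z^{k+1})\bigr)$, so the moments of $\tmmathbf{\nu}$ are uniquely determined by those of $\tmmathbf{\mu}$, and a compactly supported probability measure is determined by its moments. For bijectivity, the map $MK_{(q)}$ factors as the composition $T_{-q} \circ MK_{(-1)} \circ T_{-1/q}$, each factor of which is a bijection onto its image, with inverse $T_{-q} \circ MK_{(-1)}^{-1} \circ T_{-1/q}$. I do not foresee a substantive obstacle here; the only step meriting genuine care is checking that the density bound transforms under $T_{-1/q}$ to exactly the classical threshold $1$, which is a bookkeeping computation rather than a real difficulty.
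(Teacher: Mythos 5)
Your proof is correct and is essentially the paper's own argument: the paper defines $M K_{(q)} := \Pi_{-q} \circ M K_{(-1)} \circ \Pi_{-1/q}$, where $\Pi_{\alpha}$ is exactly your dilation pushforward $T_{\alpha}$, and asserts the required properties. You simply carry out in detail the moment and density-bound bookkeeping that the paper leaves implicit, and those computations check out.
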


\begin{proof}
  The map $M \text{} K_{(q)} \assign \Pi_{- q} \circ M \text{} K_{(- 1)} \circ
  \Pi_{- 1 / q}$ where $\Pi_{\alpha} (\tmmathbf{\mu}) (\cdot) \assign
  \tmmathbf{\mu} (\{ x \in \mathbb{R} \of \alpha x \in \cdot \})$ for $\alpha
  \in \mathbb{R}$, satisfies all the desired properties.
\end{proof}

Combining $M \text{} K_{(q)}$ for $q = \pm 1$ and $- 1 < q < 1$ we can get an
operation of probability measures that translates the classical free
convolution to the $q$-deformed quantized free convolution.

\begin{theorem}
  \label{Xri}Let $q \in (- 1, 0) \cup (0, 1)$. For every probability measure
  $\tmmathbf{\mu}$ with compact support on $\mathbb{R}$ there exist unique
  probability measures $\tmmathbf{\nu} $ and $\tmmathbf{\lambda}$, with
  compact support on $\mathbb{R}$ such that
  \[ 1 - G_{\tmmathbf{\mu}} (z) = (1 - q \text{} G_{\tmmathbf{\nu}} (z))^{1 /
     q} \text{ and } 1 + G_{\tmmathbf{\mu}} (z - 1) = (1 -
     qG_{\tmmathbf{\lambda}} (z))^{- 1 / q} \]
  for $z \in \mathbb{C}$ with $|z|^{- 1}$ small enough. Moreover the map
  $\mathfrak{P}$ that sends such measures $\tmmathbf{\mu}$ to the
  corresponding $\tmmathbf{\nu}$ and the map $\mathfrak{Q}$ that sends
  $\tmmathbf{\mu}$ to the corresponding $\tmmathbf{\lambda}$ are injective and
  such that
  \begin{equation}
    \mathfrak{P} (\tmmathbf{\nu}_1 \boxplus \tmmathbf{\nu}_2) =\mathfrak{P}
    (\tmmathbf{\nu}_1) \otimes_q \mathfrak{P} (\tmmathbf{\nu}_2) \text{\quad
    and\quad} \mathfrak{Q} (\tmmathbf{\nu}_1 \boxplus \tmmathbf{\nu}_2)
    =\mathfrak{Q} (\tmmathbf{\nu}_1) \otimes_q \mathfrak{Q} (\tmmathbf{\nu}_2)
    . \label{qconv}
  \end{equation}
\end{theorem}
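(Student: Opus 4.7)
The plan is to exhibit $\mathfrak{P}$ and $\mathfrak{Q}$ as short compositions of the bijections $MK_{(q)}$ from the corollary just proved, so that existence, uniqueness and injectivity come for free, and then to reduce the convolution identities (\ref{qconv}) to a single functional identity for the quantized $R$-transform. Writing $w = 1/z$ and $F_\mu(w) := \sum_{k \geq 0} \mu_k w^{k+1}$, the defining relation for $\mathfrak{P}$ rewrites as $(1 - F_\mu(w))^q = 1 - q F_\nu(w)$; taking logarithms and comparing with the generating-series form $-q F_\pi = \log(1 - q F_\nu)$ of $MK_{(q)}$ (with $\pi = MK_{(q)}^{-1}(\nu)$), together with the analogous identity for $MK_{(1)}$, identifies $\pi$ with $MK_{(1)}^{-1}(\mu)$. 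Hence $\mathfrak{P}(\mu) = MK_{(q)}(MK_{(1)}^{-1}(\mu))$, and the identical manipulation applied to $\tilde\mu := T_1 \mu$ (pushforward by $t \mapsto t + 1$) yields $\mathfrak{Q}(\mu) = MK_{(q)}(MK_{(-1)}^{-1}(T_1 \mu))$. Since $q \in (-1, 0) \cup (0, 1)$ forces $|q| < 1$ and thus $1/|q| > 1$, the intermediate measures $MK_{(\pm 1)}^{-1}(\cdot)$ have density bounded by $1 \leq 1/|q|$, so $MK_{(q)}$ applies; as each map in the composition is a bijection between the regularity classes listed in the preceding corollary, this gives existence and uniqueness of $\nu, \lambda$ as well as the injectivity of $\mathfrak{P}, \mathfrak{Q}$.

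The key to (\ref{qconv}) is the functional identity
\[
R^{\mathrm{quant}(q)}_{\mathfrak{P}(\mu)}(\phi(y)) = R_\mu(y), \qquad \phi(y) := (1 - (1 - y)^q)/q,
\]
which I would derive from $G_{\mathfrak{P}(\mu)}(z) = \phi(G_\mu(z))$ by inverting to $K_{\mathfrak{P}(\mu)}(\phi(y)) = K_\mu(y)$ and then using the one-line identity $e_q(\phi(y)) = (1 - y)^{-1}$ to compute $R_{\beta(1-q, 1+q)}(\phi(y)) + 1/\phi(y) = 1/y$; subtracting leaves exactly $R_\mu(y)$. Since $\phi^{-1}$ is a fixed analytic germ independent of $\mu$ and $R$ linearizes $\boxplus$, the assignment $\mu \mapsto R^{\mathrm{quant}(q)}_{\mathfrak{P}(\mu)} = R_\mu \circ \phi^{-1}$ is additive under $\boxplus$. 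Combined with the fact that $R^{\mathrm{quant}(q)}$ linearizes $\otimes_q$, which is immediate from Corollary \ref{COROLLARYMAIN} upon subtracting $R_{\beta(1-q,1+q)}$ from both sides of the identity stated there, this yields
\[
R^{\mathrm{quant}(q)}_{\mathfrak{P}(\mu_1 \boxplus \mu_2)} = R^{\mathrm{quant}(q)}_{\mathfrak{P}(\mu_1)} + R^{\mathrm{quant}(q)}_{\mathfrak{P}(\mu_2)} = R^{\mathrm{quant}(q)}_{\mathfrak{P}(\mu_1) \otimes_q \mathfrak{P}(\mu_2)},
\]
whence the first identity in (\ref{qconv}) by the uniqueness of the quantized $R$-transform. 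For $\mathfrak{Q}$ the argument is identical with $\psi(y) := (1 - (1 + y)^{-q})/q$ in place of $\phi$; the one bookkeeping point is that $R_{T_1 \mu} = R_\mu + 1$ contributes an extra $+1$ which is exactly cancelled by the parallel computation $R_{\beta(1-q,1+q)}(\psi(y)) + 1/\psi(y) = 1 + 1/y$, so that $R^{\mathrm{quant}(q)}_{\mathfrak{Q}(\mu)}(\psi(y)) = R_\mu(y)$ still holds.

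The most delicate point I expect to confront is verifying that the image measures $\mathfrak{P}(\mu), \mathfrak{Q}(\mu)$ lie in a class on which $\otimes_q$ is actually defined: Corollary \ref{COROLLARYMAIN} introduced $\otimes_q$ for measures approximated by $m_{N, PP(q)}[\mathlambda(N)]$, and the factorization through $MK_{(q)}$ addresses this because the codomain of $MK_{(q)}$ is precisely the class of compactly supported limiting measures identified in \cite{B1}. A secondary, mechanical check is the two algebraic identities $R_{\beta(1-q, 1+q)}(\phi(y)) + 1/\phi(y) = 1/y$ and $R_{\beta(1-q, 1+q)}(\psi(y)) + 1/\psi(y) = 1 + 1/y$; both reduce to the defining formula $e_q(z) = (1 - qz)^{-1/q}$ and are one-line calculations once the substitution is set up.
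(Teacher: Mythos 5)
Your proof is correct and follows essentially the same route as the paper: factor $\mathfrak{P}$ and $\mathfrak{Q}$ through the Markov--Krein bijections $M K_{(q)}$, compute $R_{\mathfrak{P}(\tmmathbf{\mu})}$ and $R_{\mathfrak{Q}(\tmmathbf{\mu})}$ explicitly, observe that subtracting $R_{\beta(1-q,1+q)}$ leaves $R_{\tmmathbf{\mu}}$ precomposed with a fixed $\tmmathbf{\mu}$-independent germ, and conclude by linearization of $\boxplus$ and $\otimes_q$. One remark: the paper's proof writes $\mathfrak{P}=M K_{(q)}\circ M K_{(-1)}^{-1}$ and $\mathfrak{Q}=M K_{(q)}\circ M K_{(1)}^{-1}\circ T_1$, i.e.\ with the subscripts $\pm 1$ transposed relative to yours; checking against the defining relations $1-G_{\tmmathbf{\mu}}=(1-qG_{\tmmathbf{\nu}})^{1/q}$ and $1+G_{\tmmathbf{\mu}}(z-1)=(1-qG_{\tmmathbf{\lambda}})^{-1/q}$ shows that your assignment is the consistent one (and the paper's subsequent $R$-transform formulas agree with your version, not with its own stated compositions), so this appears to be a typo in the paper rather than a defect in your argument.
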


\begin{proof}
  We start with the map $\mathfrak{P}$. We define $\mathfrak{P} \assign M
  \text{} K_{(q)} \circ M \text{} K_{(- 1)}^{- 1}$. It is immediate that the
  desired relation between $G_{\tmmathbf{\mu}}$ and $G_{\mathfrak{P}
  (\tmmathbf{\mu})}$ is satisfied and $\mathfrak{P}$ is injective. In order to
  show the property regarding the free convolution, note that for a compactly
  supported probability measure $\tmmathbf{\mu}$ on $\mathbb{R}$, by the
  definition of $\mathfrak{P} (\tmmathbf{\mu})$ we have
  \[ R_{\mathfrak{P} (\tmmathbf{\mu})} (z) = R_{\tmmathbf{\mu}} (1 - (1 - q
     \text{} z)^{1 / q}) + \frac{(1 - q \text{} z)^{- 1 / q}}{(1 - q \text{}
     z)^{- 1 / q} - 1} - \frac{1}{z}, \]
  which implies that $R^{q \text{} u \text{} a \text{} n \text{} t
  (q)}_{\mathfrak{P} (\tmmathbf{\nu}_1 \boxplus \tmmathbf{\nu}_2)} (z) = R^{q
  \text{} u \text{} a \text{} n \text{} t (q)}_{\mathfrak{P} (\tmmathbf{\mu})}
  (z) + R^{q \text{} u \text{} a \text{} n \text{} t (q)}_{\mathfrak{P}
  (\tmmathbf{\nu})} (z)$.
  
  On the other hand, we define $\mathfrak{Q} \assign M \text{} K_{(q)} \circ M
  \text{} K_{(1)}^{- 1} \circ T_1$ where $T_1 (\tmmathbf{\mu}) (\cdot) \assign
  \tmmathbf{\mu} (\{x \in \mathbb{R} \of x + 1 \in \cdot\})$ and by definition
  we have $1 + G_{\tmmathbf{\mu}} (z - 1) = (1 -
  qG_{\mathfrak{Q}(\tmmathbf{\mu})} (z))^{- 1 / q}$. As a consequence
  $R_{\mathfrak{Q} (\tmmathbf{\mu})}$ is given by
  \[ R_{\mathfrak{Q} (\tmmathbf{\mu})} (z) = R_{\tmmathbf{\mu}} ((1 - q
     \text{} z)^{- 1 / q} - 1) + \frac{(1 - q \text{} z)^{- 1 / q}}{(1 - q
     \text{} z)^{- 1 / q} - 1} - \frac{1}{z}, \]
  which implies that $R^{q \text{} u \text{} a \text{} n \text{} t
  (q)}_{\mathfrak{Q} (\tmmathbf{\nu}_1 \boxplus \tmmathbf{\nu}_2)} (z) =
  R_{\mathfrak{Q} (\tmmathbf{\nu}_1)}^{q \text{} u \text{} a \text{} n \text{}
  t (q)} (z) + R_{\mathfrak{Q} (\tmmathbf{\nu}_2)}^{q \text{} u \text{} a
  \text{} n \text{} t (q)} (z)$.
\end{proof}

We denote by $\mathcal{P}_{\leq 1} (\mathbb{R})$ the set of compactly
supported probability measures on $\mathbb{R}$, continuous with respect to the
Lebesque measure and with probability density function bounded by $1$. Note
that the images of $\mathfrak{P}$ and $\mathfrak{Q}$ are the same but these
maps are not surjective. By relation (\ref{qconv}) we see that in order to
define the $q$-deformed quantized free convolution, we have to restrict on
probability measures that belong to the image of $\mathfrak{P}$. In other
words, it is defined on $M \text{} K_{(q)} (\mathcal{P}_{\leq 1}
(\mathbb{R}))$.

It is an open question to determine this set. However, since $\mathfrak{P}$
and $\mathfrak{P}^{- 1}$ are continuous we are able to determine some dense
subsets. First of all, since every probability measure on $\mathcal{P}_{\leq
1} (\mathbb{R})$ is a limit as $N \rightarrow \infty$ of a measure $m_{N, P
\text{} P (0)} [\mathlambda (N)]$, for some $\mathlambda (N) \in \hat{U} (N)$,
Proposition \ref{Prop9} implies that the image of this measure under $M
\text{} K_{(q)}$ is a limit as $N \rightarrow \infty$ of $m_{N, P \text{} P
(q)} [\mathlambda (N)]$. Similarly, starting from another dense subset of
$\mathcal{P}_{\leq 1} (\mathbb{R})$ we get another dense subset of $M \text{}
K_{(q)} (\mathcal{P}_{\leq 1} (\mathbb{R}))$.

\begin{proposition}
  Let $\mathcal{P}_{\alpha, \beta}^{(q)} (\mathbb{R})$ be the set of
  probability measures on $\mathbb{R}$ that are continuous with respect to the
  Lebesque measure and they have probability density functions of the form
  \[ f_{\alpha, \beta}^{(q)} (x) = \left\{\begin{array}{l}
       \frac{\sin (\mathpi q)}{\mathpi q} \prod_{i = 1}^r |x - \alpha_i |^{-
       q} |x - \beta_i |^q \text{, \quad} x \in [\alpha_1, \beta_1] \cup
       \ldots [\alpha_r, \beta_r]\\
       0 \text{, \quad} \tmop{otherwise},
     \end{array}\right. \]
  where
  \[ \alpha_1 < \beta_1 < \alpha_2 < \beta_2 < \cdots < \alpha_r < \beta_r
     \text{\quad and\quad} \sum_{i = 1}^r (\beta_i - \alpha_i) = 1. \]
  The set $\mathcal{P}_{\alpha, \beta}^{(q)} (\mathbb{R})$ is a dense subset
  of $M \text{} K_{(q)} (\mathcal{P}_{\leq 1} (\mathbb{R}))$.
\end{proposition}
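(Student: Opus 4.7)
The plan is to identify the preimage of $\mathcal{P}^{(q)}_{\alpha,\beta}(\mathbb{R})$ under $MK_{(q)}$ explicitly as a family of piecewise uniform densities on $\mathbb{R}$, and then to combine a standard approximation in $\mathcal{P}_{\leq 1}(\mathbb{R})$ with the continuity of $MK_{(q)}$ already observed after Theorem \ref{Xri}. The key computation isolates the transform on step functions, and the rest is transport through a continuous map.

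For the computation, I would take $\mu = \mathbf{1}_A\,dx$ where $A = \bigsqcup_{i=1}^r[\alpha_i,\beta_i]$ has the stated ordering and $\sum_i(\beta_i-\alpha_i)=1$. Then $G_\mu(z) = \sum_i\log\tfrac{z-\alpha_i}{z-\beta_i}$, and the Stieltjes-transform form of the Markov--Krein relation (the displayed identity in the remark after Theorem \ref{ata}) gives
\[
G_{MK_{(q)}(\mu)}(z) \;=\; \frac{1-\exp(-q\,G_\mu(z))}{q} \;=\; \frac{1}{q} \;-\; \frac{1}{q}\prod_{i=1}^r\left(\frac{z-\beta_i}{z-\alpha_i}\right)^{q}.
\]
Applying Stieltjes inversion, as $z \to x + i0^+$ with $x \in (\alpha_j,\beta_j)$ exactly the $j$-th factor crosses the negative real axis and contributes a phase $e^{i\pi q}$, while all other factors are positive real; so the imaginary part of $G_{MK_{(q)}(\mu)}(x+i0^+)$ equals $-\tfrac{\sin(\pi q)}{q}\prod_i|x-\beta_i|^{q}|x-\alpha_i|^{-q}$. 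Dividing by $-\pi$ recovers precisely $f^{(q)}_{\alpha,\beta}(x)$; a parallel phase count on each gap shows the density vanishes off $A$. Hence $MK_{(q)}(\mathbf{1}_A\,dx) \in \mathcal{P}^{(q)}_{\alpha,\beta}(\mathbb{R})$, and every element of the latter arises this way.

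For the approximation, given any $f\,dx \in \mathcal{P}_{\leq 1}(\mathbb{R})$ supported in $[-M,M]$, I would partition $[-M,M]$ into $N$ equal subintervals $I_k$ and approximate $f$ by $\sum_k\mathbf{1}_{I'_k}$, where $I'_k\subset I_k$ has Lebesgue length $\int_{I_k} f$. The resulting density is again bounded by $1$, the total mass equals $1$, and after merging touching intervals we obtain a member of the family $\mathcal{A}$ of step-function densities from the previous paragraph; these converge to $f$ in moments as $N\to\infty$. Since $MK_{(q)} = \Pi_{-q}\circ MK_{(-1)}\circ\Pi_{-1/q}$ and each factor is continuous, $MK_{(q)}$ is continuous, so the image of the dense set $\mathcal{A}$ is dense in $MK_{(q)}(\mathcal{P}_{\leq 1}(\mathbb{R}))$. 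This image equals $\mathcal{P}^{(q)}_{\alpha,\beta}(\mathbb{R})$ by the previous step, proving the proposition. The principal obstacle will be the branch tracking in the computation: verifying that the arguments of $(z-\beta_i)/(z-\alpha_i)$ accumulate to $\pi$ on each $(\alpha_j,\beta_j)$ and to $0$ on the gaps, so that $\sin(\pi q)$ emerges with the correct sign and the density vanishes off $A$. The strict ordering $\alpha_1<\beta_1<\cdots<\beta_r$ is precisely what makes this bookkeeping close cleanly.
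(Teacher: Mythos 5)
Your proposal is correct and follows essentially the same route as the paper: approximate an arbitrary element of $\mathcal{P}_{\leq 1} (\mathbb{R})$ by uniform measures on finite unions of intervals of total length one, compute $G_{M K_{(q)} (\tmmathbf{\mu}_{\alpha, \beta})} (z) = \frac{1}{q} - \frac{1}{q} \prod_i (z - \beta_i)^q (z - \alpha_i)^{- q}$ from the Markov--Krein relation for Stieltjes transforms, and recover $f^{(q)}_{\alpha, \beta}$ by Stieltjes inversion, transporting density through the continuity of $M K_{(q)}$. Your explicit phase bookkeeping on each interval and gap, and the explicit step-function approximation, merely fill in details the paper leaves implicit.
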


\begin{proof}
  Since every measure on $\mathcal{P}_{\leq 1} (\mathbb{R})$ can be
  approximated by uniform measures $\tmmathbf{\mu}_{\alpha, \beta}$ on subsets
  of $\mathbb{R}$ of the form $[\alpha_1, \beta_1] \cup \ldots \cup [\alpha_r,
  \beta_r]$, where $\alpha_1 < \beta_1 < \alpha_2 < \beta_2 < \cdots <
  \alpha_r < \beta_r$ and $\sum_{i = 1}^r (\beta_i - \alpha_i) = 1$, the image
  of these uniform measures under $M \text{} K_{(q)}$ provides a dense subset
  of $M \text{} K_{(q)} (\mathcal{P}_{\leq 1} (\mathbb{R}))$. Because
  \[ G_{\tmmathbf{\mu}_{\alpha, \beta}} (z) = \sum_{k = 0}^{\infty} \sum_{i =
     1}^r \int_{\alpha_i}^{\beta_i} t^k d \text{} t \frac{1}{z^{k + 1}} = \log
     \left( \prod_{i = 1}^r \frac{z - \alpha_i}{z - \beta_i} \right), \]
  we deduce that the Stieltjes transform of $M \text{} K_{(q)}
  (\tmmathbf{\mu}_{\alpha, \beta})$ is
  \[ G_{M \text{} K_{(q)} (\tmmathbf{\mu}_{\alpha, \beta})} (z) = \frac{1}{q}
     - \frac{1}{q} \prod_{i = 1}^r \frac{(z - \beta_i)^q}{(z - \alpha_i)^q} .
  \]
  Therefore, the Stieltjes inversion formula implies that $M \text{} K_{(q)}
  (\tmmathbf{\mu}_{\alpha, \beta}) (d \text{} x) = f_{\alpha, \beta}^{(q)} (x)
  \text{} d \text{} x$.
\end{proof}

\begin{remark}
  The probability measures $\tmmathbf{\nu}$ that belong to the image of
  $\mathfrak{P}$ are basically determined by the property that the function $F
  (z) = 1 - (1 - qG_{\tmmathbf{\nu}} (z))^{1 / q}$, $z \in \mathbb{C}^+$, is a
  Stieltjes transform of a compactly supported probability measure, on
  $\mathbb{R}$, where $\mathbb{C}^+ \assign \{z \in \mathbb{C} \of \text{Im}
  (z) > 0\}$ and without loss of generality $q > 0$. This automatically means
  that $G_{\tmmathbf{\nu}}$ must satisfy more properties than the standard of
  a Stieltjes transform (see in {\cite{B17}}). We recall that one of the
  necessary conditions for $F$ to be a Stieltjes transform is to be analytic,
  which is the case since $G_{\tmmathbf{\nu}}$ is. Moreover $\limsup_{y
  \rightarrow \infty} y |F (i \text{} y) |$ must be equal to one, which is
  true because $\lim_{y \rightarrow \infty} i \text{} yG_{\tmmathbf{\nu}} (i
  \text{} y) = 1$ and $\lim_{z \rightarrow \infty} \left( z - z \left( 1 -
  \frac{q}{z} \right)^{1 / q} \right) = 1$. The last necessary condition is $F
  (\mathbb{C}^+) \subseteq \mathbb{C}^- \assign \{z \in \mathbb{C} \of
  \text{Im} (z) < 0\}$. That's the point where we need more properties for
  $G_{\tmmathbf{\nu}}$ since the fact that $\tmop{Arg} (1 -
  qG_{\tmmathbf{\nu}} (z)) \in (0, \mathpi)$ for every $z \in \mathbb{C}^+$
  does not suffice and instead we should have that $q^{- 1} \tmop{Arg} (1 -
  qG_{\tmmathbf{\nu}} (z)) \in \{\phi + 2 \text{} k \text{} \mathpi \of \phi
  \in (0, \mathpi), k \in \mathbb{Z}\} \cap (0, q^{- 1} \mathpi)$ for every $z
  \in \mathbb{C}^+$.
\end{remark}
\raggedright


\begin{thebibliography}{99}
 \bibitem[1]{B35}N.~I.~Aheizer and M.~Krein. {\newblock}{Some
  questions in the theory of moments}, volume Vol. 2 of
{\newblock}{\em Translations of Mathematical Monographs}. {\newblock}American
  Mathematical Society, Providence, RI, 1962.{\newblock}
  
  \bibitem[2]{B20}Andrew Ahn. {\newblock}Airy point process via supersymmetric
  lifts. {\newblock}{\em Probab. Math. Phys.}, 3(4):869--938,
  2022.{\newblock}
  
  \bibitem[3]{B43}Benson Au. {\newblock}Finite-rank perturbations of random
  band matrices via infinitesimal free probability. {\newblock}{\em Comm.
  Pure Appl. Math.}, 74(9):1855--1895, 2021.{\newblock}
  
  \bibitem[4]{B13}Serban~T.~Belinschi  and  Dimitri Shlyakhtenko.
  {\newblock}Free probability of type $B$: analytic interpretation and
  applications. {\newblock}{\em Amer. J. Math.}, 134(1):193--234,
  2012.{\newblock}
  
  \bibitem[5]{B6}Florent Benaych-Georges, Cesar Cuenca, and  Vadim Gorin.
  {\newblock}Matrix addition and the Dunkl transform at high temperature.
  {\newblock}\tmtextit{Comm. Math. Phys.}, 394(2):735--795, 2022.{\newblock}
  
  \bibitem[6]{B42}Philippe Biane. {\newblock}Approximate factorization and
  concentration for characters of symmetric groups.
  {\newblock}{\em Internat. Math. Res. Notices}, (4):179--192,
  2001.{\newblock}
  
  \bibitem[7]{B16}Philippe Biane, Frederick Goodman, and  Alexandru Nica.
  {\newblock}Non-crossing cumulants of type B. {\newblock}{\em Trans.
  Amer. Math. Soc.}, 355(6):2263--2303, 2003.{\newblock}
  
  \bibitem[8]{B2}Alexei Borodin, Alexey Bufetov, and  Grigori Olshanski.
  {\newblock}Limit shapes for growing extreme characters of $U (\infty)$.
  {\newblock}{\em Ann. Appl. Probab.}, 25(4):2339--2381, 2015.{\newblock}
  
  \bibitem[9]{B5}Alexey Bufetov. {\newblock}Kerov's interlacing sequences and
  random matrices. {\newblock}{\em J. Math. Phys.}, 54(11):113302--10,
  2013.{\newblock}
  
  \bibitem[10]{B1}Alexey Bufetov  and  Vadim Gorin. {\newblock}Representations
  of classical Lie groups and quantized free convolution.
  {\newblock}{\em Geom. Funct. Anal.}, 25(3):763--814, 2015.{\newblock}
  
  \bibitem[11]{B3}Alexey Bufetov  and  Vadim Gorin. {\newblock}Fluctuations of
  particle systems determined by Schur generating functions.
  {\newblock}{\em Adv. Math.}, 338:702--781, 2018.{\newblock}
  
  \bibitem[12]{B4}Alexey Bufetov  and  Vadim Gorin. {\newblock}Fourier
  transform on high-dimensional unitary groups with applications to random
  tilings. {\newblock}{\em Duke Math. J.}, 168(13):2559--2649,
  2019.{\newblock}
  
  \bibitem[13]{B45}Alexey Bufetov  and  Alisa Knizel. {\newblock}Asymptotics
  of random domino tilings of rectangular Aztec diamonds.
  {\newblock}{\em Ann. Inst. Henri Poincar{\'e} Probab. Stat.},
  54(3):1250--1290, 2018.{\newblock}
  
  \bibitem[14]{B41}Alexey Bufetov  and  Panagiotis Zografos.
  {\newblock}Asymptotics of Harish-Chandra transform and infinitesimal
  freeness. {\newblock}{\em ArXiv preprint arXiv:2412.09290},
  2024.{\newblock}
  
  \bibitem[15]{B44}Guillaume C{\'e}bron, Antoine Dahlqvist, and  Franck
  Gabriel. {\newblock}Freeness of type $B$ and conditional freeness for random
  matrices. {\newblock}{\em Indiana Univ. Math. J.}, 73(3):1207--1252,
  2024.{\newblock}
  
  \bibitem[16]{B25}Beno{\^i}t Collins  and  Piotr {\'S}niady. {\newblock}New
  scaling of Itzykson-Zuber integrals. {\newblock}{\em Ann. Inst. H.
  Poincar{\'e} Probab. Statist.}, 43(2):139--146, 2007.{\newblock}
  
  \bibitem[17]{B27}Persi Diaconis  and  David Freedman. {\newblock}The Markov
  moment problem and de Finetti's theorem. I. {\newblock}{\em Math. Z.},
  247(1):183--199, 2004.{\newblock}
  
  \bibitem[18]{B40}Charles Dunkl  and  Vadim Gorin. {\newblock}Eigenvalues of Heckman-Polychronakos operators. {\newblock}{\em ArXiv preprint
  arXiv:2412.01938}, 2024.{\newblock}
  
  \bibitem[19]{B30}Albert Edrei. {\newblock}On the generation function of a
  doubly infinite, totally positive sequence. {\newblock}{\em Trans.
  Amer. Math. Soc.}, 74:367--383, 1953.{\newblock}
  
  \bibitem[20]{B19}Jacques Faraut  and  Faiza Fourati. {\newblock}Markov-Krein
  transform. {\newblock}{\em Colloq. Math.}, 144(1):137--156,
  2016.{\newblock}
  
  \bibitem[21]{B36}Maxime F{\'e}vrier. {\newblock}Higher order infinitesimal
  freeness. {\newblock}{\em Indiana Univ. Math. J.}, 61(1):249--295,
  2012.{\newblock}
  
  \bibitem[22]{B14}Maxime F{\'e}vrier  and  Alexandru Nica.
  {\newblock}Infinitesimal non-crossing cumulants and free probability of type
  B. {\newblock}{\em J. Funct. Anal.}, 258(9):2983--3023,
  2010.{\newblock}
  
  \bibitem[23]{B32}Katsunori Fujie  and  Takahiro Hasebe.{\newblock}The
  spectra of principal submatrices in rotationally invariant Hermitian random
  matrices and the Markov-Krein correspondence. {\newblock}{\em ALEA Lat.
  Am. J. Probab. Math. Stat.}, 19(1):109--123, 2022.{\newblock}
  
  \bibitem[24]{B21}Gopal~K.~Goel  and  Andrew Yao. {\newblock}A quantized
  analogue of the Markov-Krein correspondence. {\newblock}{\em Int. Math.
  Res. Not. IMRN}, (6):4805--4838, 2023.{\newblock}
  
  \bibitem[25]{B7}Vadim Gorin  and  Greta Panova. {\newblock}Asymptotics of
  symmetric polynomials with applications to statistical mechanics and
  representation theory. {\newblock}{\em Ann. Probab.}, 43(6):3052--3132,
  2015.{\newblock}
  
  \bibitem[26]{B8}Vadim Gorin  and  Yi Sun. {\newblock}Gaussian fluctuations
  for products of random matrices. {\newblock}{\em Amer. J. Math.},
  144(2):287--393, 2022.{\newblock}
  
  \bibitem[27]{B10}Alice Guionnet  and  Myl{\`e}ne Ma{\"i}da. {\newblock}A
  Fourier view on the $R$-transform and related asymptotics of spherical
  integrals. {\newblock}{\em J. Funct. Anal.}, 222(2):435--490,
  2005.{\newblock}
  
  \bibitem[28]{B11}Jiaoyang Huang. {\newblock}Law of large numbers and central
  limit theorems through Jack generating functions. {\newblock}{\em Adv.
  Math.}, 380:107545, 2021.{\newblock}
  
  \bibitem[29]{B24}Claude Itzykson  and  Jean~B.~Zuber. {\newblock}The planar
  approximation. II. {\newblock}{\em J. Math. Phys.}, 21(3):411--421,
  1980.{\newblock}
  
  \bibitem[30]{B18}Sergei~V.~Kerov. {\newblock}{\em Asymptotic
  representation theory of the symmetric group and its applications in
  analysis},  volume  219  of {\newblock}{\em Translations of Mathematical
  Monographs}. {\newblock}American Mathematical Society, Providence, RI,
  2003.{\newblock}
  
  \bibitem[31]{B23}Mark~G.~Krein  and  Adolf~A.~Nudelman.
  {\newblock}{\em The Markov moment problem and extremal problems}, 
  volume  Vol. 50  of {\newblock}{\em Translations of Mathematical Monographs}.
  {\newblock}American Mathematical Society, Providence, RI, 1977.{\newblock}
  
  \bibitem[32]{B33}Pierre Mergny  and  Marc Potters. {\newblock}Rank one HCIZ
  at high temperature: interpolating between classical and free convolutions.
  {\newblock}{\em SciPost Phys.}, 12(1):0, 2022.{\newblock}
  
  \bibitem[33]{B38}James~A.~Mingo. {\newblock}Non-crossing annular pairings
  and the infinitesimal distribution of the GOE. {\newblock}{\em J. Lond.
  Math. Soc. (2)}, 100(3):987--1012, 2019.{\newblock}
  
  \bibitem[34]{B17}James~A.~Mingo  and  Roland Speicher.
  {\newblock}{\em Free probability and random matrices},  volume~35  of
  {\newblock}{\em Fields Institute Monographs}. {\newblock}Springer, New York;
  Fields Institute for Research in Mathematical Sciences, Toronto, ON,
  2017.{\newblock}
  
  \bibitem[35]{B15}Alexandru Nica  and  Roland Speicher.
  {\newblock}{\em Lectures on the combinatorics of free probability}, 
  volume  335  of {\newblock}{\em London Mathematical Society Lecture Note Series}.
  {\newblock}Cambridge University Press, Cambridge, 2006.{\newblock}
  
  \bibitem[36]{B29}Grigori Olshanski. {\newblock}Unitary representations of
  infinite-dimensional pairs $(G, K)$ and the formalism of R. Howe.
  {\newblock}In {\newblock}{\em Representation of Lie groups and related topics}, 
  volume~7  of {\newblock}{\em Adv. Stud. Contemp. Math.},  pages  269--463. Gordon
  and Breach, New York, 1990.{\newblock}
  
  \bibitem[37]{B28}Grigori Olshanski. {\newblock}The problem of harmonic
  analysis on the infinite-dimensional unitary group. {\newblock}{\em J.
  Funct. Anal.}, 205(2):464--524, 2003.{\newblock}
  
  \bibitem[38]{B26}Askold~M.~Perelomov  and  Vladimir~S.~Popov.
  {\newblock}Casimir operators for semi-simple Lie groups.
  {\newblock}{\em Izv. Akad. Nauk SSSR Ser. Mat.}, 32:1368--1390,
  1968.{\newblock}
  
  \bibitem[39]{B12}Dimitri Shlyakhtenko. {\newblock}Free probability of type-B
  and asymptotics of finite-rank perturbations of random matrices.
  {\newblock}{\em Indiana Univ. Math. J.}, 67(2):971--991,
  2018.{\newblock}
  
  \bibitem[40]{B34}Richard~P.~Stanley. {\newblock}\tmtextit{Enumerative
  combinatorics. Vol. 2}, volume 208 of {\newblock}{\em Cambridge Studies in
  Advanced Mathematics}. {\newblock}Cambridge University Press, Cambridge,
  Second  edition, 2024.{\newblock}
  
  \bibitem[41]{B31}Dan~V.~Voiculescu. {\newblock}Repr{\'e}sentations
  factorielles de type II\tmrsub{1} de $U (\infty)$. {\newblock}{\em J.
  Math. Pures Appl. (9)}, 55(1):1--20, 1976.{\newblock}
  
  \bibitem[42]{B22}Dan~V.~Voiculescu, Ken~J.~Dykema, and  Alexandru Nica.
  {\newblock}{\em Free random variables},  volume~1  of {\newblock}{\em CRM
  Monograph Series}. {\newblock}American Mathematical Society, Providence, RI,
  1992.{\newblock}
  
  \bibitem[43]{B37}Hermann Weyl. {\newblock}{\em The Classical Groups.
  Their Invariants and Representations}. {\newblock}Princeton University
  Press, Princeton, NJ, 1939.{\newblock}
\end{thebibliography}
\end{document}